\renewcommand{\leq}{\leqslant}
\renewcommand{\geq}{\geqslant}
\newtheorem{theorem}{Theorem}[section]
\newaliascnt{headcor}{headthm}
\newaliascnt{headconj}{headthm}
\newaliascnt{corollary}{theorem}
\newtheorem{corollary}[corollary]{Corollary}
\newaliascnt{claim}{theorem}
\newaliascnt{lemma}{theorem}
\newtheorem{lemma}[lemma]{Lemma}
\newaliascnt{conjecture}{theorem}
\newtheorem{conjecture}[conjecture]{Conjecture}
\newaliascnt{proposition}{theorem}
\newtheorem{proposition}[proposition]{Proposition}
\theoremstyle{definition}
\newaliascnt{definition}{theorem}
\newtheorem{definition}[definition]{Definition}
\newaliascnt{notation}{theorem}
\newtheorem{notation}[notation]{Notation}
\newaliascnt{example}{theorem}
\newtheorem{example}[example]{Example}
\newaliascnt{examples}{theorem}
\newaliascnt{remark}{theorem}
\newtheorem{remark}[remark]{Remark}
\newaliascnt{fact}{theorem}
\newaliascnt{question}{theorem}
\newtheorem{question}[question]{Question}
\newaliascnt{questions}{theorem}
\newaliascnt{problem}{theorem}
\newaliascnt{construction}{theorem}
\newaliascnt{setup}{theorem}
\newaliascnt{algorithm}{theorem}
\newaliascnt{observation}{theorem}
\newaliascnt{discussion}{theorem}
\newtheorem{discussion}[discussion]{Discussion}
\newaliascnt{defprop}{theorem}
\def\equationautorefname~#1\null{(#1)\null}
\def\sectionautorefname~#1\null{Section #1\null}
\def\subsectionautorefname~#1\null{\S #1\null}
\def\trdeg{{\rm trdeg}}
\def\tratto{\mbox{\rule{2mm}{.2mm}$\;\!$}}
\newcommand{\undl}[1]{\underline{#1}}
\newcommand{\ini}[1]{\operatorname{in}_{<}(#1)}
\newcommand{\pars}[1]{\left(#1\right)}
\def \height{{\operatorname{ht}}}
\def \depth{{\operatorname{depth\, }}}
\def \Spec{{\operatorname{Spec\, }}}
\def \Proj{{\operatorname{Proj\, }}}
\def\Hom{{\rm Hom}}
\def\Ext{\operatorname{Ext}}
\def \Ker{{\operatorname{Ker\,}}}
\def \core{{\operatorname{core}}}
\def \gradedcore{{\operatorname{gradedcore}}}
\def \indeg{{\operatorname{indeg}}}
\def \trdeg{{\operatorname{trdeg}}}
\def \multdeg{{\operatorname{deg}}}
\def \lcm{{\operatorname{lcm}}}
\def \Sym{{\operatorname{Sym}}}
\DeclareMathOperator{\gr}{gr}
\DeclareMathOperator{\HS}{HS}
\DeclareMathOperator{\Quot}{Quot}
\def \fv{\mathbf v}
\def \fy{\mathbf y}
\def \ll{\lambda}
\def \fw{\mathbf w}
\def \om{\omega}
\def \f1{\mathbf{1}}
\newcommand{\kk}{\mathbb{k}}
\def\xi{x}
\def\ls{\leqslant}
\def\gs{\geqslant}
\def\ll{\lambda}
\def\llb{\underline{\lambda}}
\def\zb{\underline{z}}
\def\fm{\mathfrak{m}}
\def\fp{\mathfrak{p}}
\def \QQ{\mathbb Q}
\def \AA{\mathbb A}
\def \NN{\mathbb N}
\def \ZZ{\mathbb Z}
\def \C{\mathcal C}
\def \R{\mathcal R}
\def \H{\mathcal H}
\def \A{\mathcal A}
\def \F{\mathcal F}
 \def \mono{{\operatorname{mono}}}
    \def\fx{\mathbf{x}}
\def \core{{\operatorname{core}}}
\def \indeg{{\operatorname{indeg}}}
\def\tn{\textnormal}
\def \adj{{\operatorname{{\rm adj}}}}
\begin{document}

\title[ The core of monomial ideals]{The core of monomial ideals}

\author[Louiza Fouli]{Louiza Fouli}
\address{Louiza Fouli \\ Department of Mathematical Sciences  \\ New Mexico State University  \\PO Box 30001\\Las Cruces, NM 88003-8001}
\email{lfouli@nmsu.edu}

\author[Jonathan Monta{\~n}o]{Jonathan Monta{\~n}o$^1$}
\address{Jonathan Monta{\~n}o\\School of Mathematical and Statistical Sciences, Arizona State University, P.O. Box 871804, Tempe, AZ 85287-18041}
\email{montano@asu.edu}
\thanks{$^{1}$ The second author was partially funded by NSF Grant DMS \#2001645/2303605.}

\author[Claudia Polini]{Claudia Polini$^2$}
\address{Claudia Polini\\Department of Mathematics\\University of Notre Dame\\255 Hurley\\ Notre Dame, IN 46556 }
\email{cpolini@nd.edu}
\thanks{$^{2}$ The third author was partially funded by NSF Grant DMS \#2201110.}

\author[Bernd Ulrich]{Bernd Ulrich$^3$}
\address{Bernd Ulrich\\ Department of Mathematics \\ Purdue University \\150 North University Street \\West Lafayette, IN 47907}
\email{bulrich@purdue.edu}
\thanks{$^{3}$ The fourth author was partially funded by NSF Grant DMS \#2201149.}

  \begin{abstract}
The core of an ideal is defined as the intersection of all of its reductions. In this paper we provide an explicit description for the core of a monomial ideal $I$ satisfying certain residual conditions, showing that ${\rm core}(I)$ coincides with the largest monomial ideal contained in a general  reduction of $I$. We prove that the class of lex-segment ideals  satisfies these residual conditions and study the core of lex-segment ideals generated in one degree. For monomial ideals that do not necessarily satisfy the residual conditions and  that are generated in one degree, we conjecture  an explicit formula for the core, and make progress towards this conjecture.
  \end{abstract}

\keywords{Monomial ideals, reductions, core, lex-segment ideals, canonical module, special fiber rings, adjoints.}
\subjclass[2020]{Primary: 13A30, 13B22, 05E40. Secondary: 14F18, 13C40,  13P10.}

\maketitle

\section{Introduction}

The {\it core} of an ideal $I$ in a Noetherian ring is the intersection
of all reductions of $I$, i.e., all ideals over which $I$ is integral.  Since reductions, even minimal ones, are highly non-unique, one
uses the core to encode information about all of them. The core appears naturally in the
context of Brian\c con-Skoda theorems that compare the integral closure filtration with
the adic filtration of an ideal \cite{LS, HoHu, L,  Lazarsfeld}. It is also related to adjoints and multiplier ideals \cite{L, HSw}, to Kawamata's conjecture on the non-vanishing of sections
of certain line bundles \cite{HyS1, HyS2}, and to the Cayley-Bacharach property of finite sets of points in projective space  \cite{FPU}.  Knowing the core, say of a zero-dimensional ideal
in a local Cohen-Macaulay ring, can be helpful in proofs via reduction to the Artinian case; for
the elements of $I \setminus {\text{core}}(I)$ are exactly those elements in $I$ that
remain non-zero when reducing modulo some general system of parameters inside $I$, see for instance \cite{Engheta, HMMS}. 

Being an apriori infinite intersection of reductions, the core is difficult to
compute. Explicit formulas for the core have been found, but they require strong hypotheses \cite{HSw, CPU0, HyS1, PU1,  HT,  PUV, W, FPU2, FPU,   SmCore, FM, KohlAngela1, KohlAngela2, Cumming, OWY}.  Without such hypotheses, the best one could hope for is that the core is a finite intersection of {\it general} reductions. 
This was proved in the local case assuming fairly weak {\it residual conditions} \cite{CPU}, see \autoref{sec_back} for definitions.  The first main theorem, \autoref{coreGLMR}, in the current
article generalizes this result to the non-local setting, a non-trivial generalization as the 
core is not known to be compatible with localization. In fact, our result shows aposteriori that the core 
does localize in the setting of the theorem, see \autoref{corelocalizes}. 
If in addition $I$ is generated by homogeneous polynomials of the same degree, we also prove that
the core coincides with the {\it graded core}, the intersection of all homogeneous reductions of $I$, 
see \autoref{remCo}; the question of when this equality holds was also considered by Hyry and Smith 
in connection with their work on Kawamata's conjecture \cite{HyS1}. Without a result as in \autoref{coreGLMR}, the core is essentially uncomputable as one does not know how to identify 
the {\it special} reductions needed in the intersection. In this paper we propose a method for finding such reductions
in the case of monomial ideals, see \autoref{mono section}.

With the same weak residual conditions as in \autoref{coreGLMR} we come close to proving a formula for the core in the monomial case,
by expressing the core of a monomial ideal in terms of a single general reduction. This result is based on the fact
that the core of a monomial ideal $I$ is again monomial, and hence contained in the largest monomial ideal $\mono(K)$ contained
in any reduction $K$. When the reduction $K$ is general, it is highly non-monomial and hence $\mono(K)$ is as close to
the core as possible. In \autoref{MainwithGdS2} we prove that in fact $\core(I)=\mono(K)$ if the aforementioned residual conditions are satisfied. This generalizes a result from \cite{PUV} for the case of zero-dimensional monomial ideals.  The mono of any ideal can be computed using an algorithm by Saito, Sturmfels, and Takayama \cite{SST}, and this is implemented in Macaulay2 and can be accessed with  the command {\tt monomialSubideal}  \cite{GS}.

Examples show that the results described above do not hold without any residual conditions, see \autoref{ex1} and  \autoref{ex2}.
In \autoref{mono section} we treat the graded core of monomial ideals that are generated in a single degree but do not 
satisfy any further assumptions. Whereas the graded core is always contained in $\mono(K)$ for $K$ a general reduction,
in \autoref{mainOneDegree} we come up with a monomial ideal $\mathfrak{A}$ contained in the 
graded core, and we conjecture that in fact $\gradedcore(I)= \mathfrak{A}$, see \autoref{conjectureColon}. We
also propose a way  to find the special reductions required in the intersection that gives the graded core, see \autoref{discus_one_degree}. 
These results use, in an essential way, the ideal $J$ generated by $d$ linear combinations of the monomial
generators of $I \subset R= \kk[x_1, \ldots, x_d]$ with new variables $\undl{z}=z_{ij}$ as coefficients. Considering 
$\kk [\undl{z}][x_1, \ldots, x_d]$ as a polynomial ring in the variables $x_1, \ldots, x_d,$ we form 
the ideal $\mono(J)$, which is generated by monomials $m \in \kk[x_1, \ldots, x_d]$ times ideals $C_m \subset\kk [\undl{z}].$ Due to the variation of the ideals $C_m,$ the ideal $\mono(J)$ carries considerably more information  
than $\mono(K)$ for a general reduction $K \subset R,$ which only records the monomials $m$ and 
does not suffice to determine $\gradedcore(I).$ As an application 
we prove that if the ideals $C_m$ are constant up to radical then the graded core is the mono of a general minimal reduction without any residual conditions, see \autoref{thm_constant_c}.

In the last section of the article we focus on the special class of lex-segment  ideals. We first show that these ideals satisfy the residual conditions as in \autoref{coreGLMR}. 
We conjecture that the core of a lex-segment ideal $I$ generated in a single degree is equal to $I$  times a certain power of the maximal homogeneous ideal, see \autoref{conjectureLex}. We prove one inclusion in full generality and establish the conjecture for a large number of cases, see \autoref{mainCore} and \autoref{lex conj summary}. We also show that the core of $I$ is contained in the adjoint of  $I^{g}$, where $g=\height(I)$.   The connection between cores and adjoints is particularly
attractive in the context of monomial ideals, since there is an
explicit combinatorial description for adjoints in terms of Newton
polyhedra \cite{Howald}, a description that is lacking for cores, even in
the zero-dimensional case.

\section{Background}\label{sec_back}

In this section we provide some background information and fix notations needed in the rest of the article, including the residual conditions  mentioned in the Introduction. For further information we refer to \cite{U1, CEU, HS}.  

Let $R$ be a Cohen-Macaulay ring and $I$ an ideal. A subideal $J\subseteq I$ is a {\it reduction} 
of $I$ if $I$ and $J$ have the same integral closure, or equivalently, if 
\begin{equation}\label{eq_reduction}
I^{n+1}=JI^n\qquad
\text{for}
\qquad
n\gg 0\,.
\end{equation}
The {\it reduction  number of $I$ with respect to $J$}, denoted by $r_J(I)$, is the smallest non-negative integer $n$ for which \autoref{eq_reduction} holds true.

Suppose either $R$ is local with maximal ideal $\fm$ and residue field $\kk$, or $R$ is positively graded over a field $\kk$ with  maximal homogeneous ideal $\fm$ and $I$ is homogeneous generated in a single degree.    We denote by $\ell(I)$ the {\it analytic spread} of $I$, i.e., the dimension of the {\it special fiber ring} $\F(I)=\oplus_{n\gs 0}\, I^n/\fm \, I^n$. If $\kk$ is an infinite field, then $\ell(I)$ is equal to the minimal number of generators $\mu(J)$ of any minimal reduction $J$ of $I$. Recall that a minimal reduction is a reduction that is minimal with respect to inclusion. The {\it reduction number of $I$} is  $r(I)=\min\{r_J(I)\mid  J \text{ is a minimal reduction of } I \}.$

In \cite{AN}, Artin and Nagata defined the notion of $s$-residual intersection that generalizes the notion of linkage when the linked ideals may not have the same height. 
To be precise, an $R$-ideal $K$ in an arbitrary  Cohen-Macaulay ring $R$ is an {\it $s$-residual intersection of $I$} if  $K=\mathfrak a:I$ for some $s$ generated ideal $\mathfrak a\subsetneq I$ such that $\height (K) \gs s$. We say $K$ is a {\it geometric $s$-residual intersection of $I$} if in addition we have $\height(I+K)> s.$ The ideal $I$ is said to be {\it weakly $s$-residually $S_2$}  if the ring $R/K$  satisfies Serre's condition $S_2$ for every $0\ls i\ls s$ and for every  geometric $i$-residual intersection $K$ of $I$. 
We say that $I$ satisfies $G_{s}$ if $\mu(I_{\fp})\ls \height(\fp)$ for every $\fp\in V(I)$ such that $\height(\fp) \ls  s-1$.

In this article we deal with ideals that satisfy the residual conditions $G_d$ and weakly $(d-2)$-residually $S_2$, where $d={\rm dim}(R)$. Classes of ideals that satisfy these two conditions include ideals of dimension one that are generically complete intersections.  
Moreover, if an ideal $I$  satisfies $G_d$, 
then $I$ is weakly $(d-2)$-residually $S_2$ 
if it is strongly Cohen-Macaulay or, more generally, if
after localizing it  has the sliding depth property, see \cite[Theorem 3.1]{H2} and \cite[Theorem 3.3]{HVV}. 
Examples of strongly Cohen-Macaulay ideals are  Cohen-Macaulay almost complete intersections,  Cohen-Macaulay ideals in a Gorenstein ring generated by  $\height(I)+2$ elements \cite[p. 259]{AH}, and ideals in the linkage class of a complete intersection \cite[Theorem 1.11]{H1}, such as  perfect ideals of height 2 \cite{Aper, Gaeta} and perfect Gorenstein ideals of height 3 \cite{Watanabe}. 
In this article we add to this list by proving that   lex-segment ideals satisfy both residual conditions, see \autoref{ArtinN}.

\smallskip

\section{The core and general  reductions} \label{general reductions}

In this section we define a notion of {\it general reductions} for homogeneous ideals that are not necessarily generated in one degree, and we use this new notion to  show a homogeneous version of  \cite[Theorem 4.5]{CPU} for ideals of maximal analytic spread (see \autoref{coreGLMR}).  We begin by setting up some notation.

\begin{notation}\label{Notation}\label{notation_general}
Let $\kk$ be an infinite field,  $R$ a Noetherian $\kk$-algebra, and $I\subset R$ an ideal.  Fix  a positive integer $s$ and $\undl{f}=f_1,\ldots, f_u$ a generating sequence for $I$. 
Consider an $s\times u$ matrix of variables $[\,\underline{z}\,]=[z_{i,j}]$ with $1\ls i\ls s$ and $1\ls j\ls u$.   
 An ideal $J_{s,\,\undl{z}}:=J_{s,\,\undl{z}}(\undl{f})$ of $R[\undl{z}]$ is said to be  generated by  $s$ {\it generic  elements of $I$ $($with respect to $\underline{f}$$)$}, if it is generated by  the entries of the column vector $[\,\undl{z}\,]\, [\,\undl{f}\,]^T.$ 
 
 For every $\underline{\lambda}=(\lambda_{ij})\in \AA^{s u}_\kk$ we define 
$\pi_{\underline{\lambda}}:R[\undl{z}]\rightarrow  R$
to be the evaluation map given by $z_{i,j}\mapsto \lambda_{i,j}$. 
For a positive integer $n$ 
we say that the $R$-ideals $J_1, \ldots, J_n$ are generated by {\it $s$ general elements $($with respect to $\underline{f}$$)$} of $I$, if $J_i= \pi_{\undl{\ll_i}}(J_{s,\undl{z}})$ and $(\undl{\ll_1}, \ldots, \undl{\ll_n})$  ranges over a Zariski dense open subset of $\AA^{nsu}_\kk$. 
\end{notation}

\begin{remark} \label{nonzerodivisor}
Using \autoref{Notation}, let $A \in GL_{u}(R)$ and consider the $R$-automorphism $\phi$ of $R[\,\undl{z}\,]$ that  sends the matrix $[\,\undl{z}\,]$ to $[\,\undl{z}\,] \, A$. If $[\,\undl{g}\,]^{T}:=A\, [\,\undl{f}\,]^{T}$, then $\phi(J_{s,\,\undl{z}}(\undl{f}))=J_{s,\,\undl{z}}(\undl{g})$.
\end{remark}

The following lemma shows that saturating the ideal $J_{s,\,\undl{z}}$ with respect to $I$ is the same as saturating it with respect to  any non-zero element $f\in I$. 

\begin{lemma}\label{prime}
We use \autoref{Notation}. Let $f\in I$ be a non-zero element. If $R$ is a domain, 
then $$
J_{s,\undl{z}}:_{R[\,\undl{z}\,]} I^{\infty}=J_{s,\undl{z}}:_{R[\,\undl{z}\,]} f^{\infty}$$ 
and this is a prime ideal of height $s$. 
\end{lemma}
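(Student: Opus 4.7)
The plan is to identify a single height-$s$ prime $\mathfrak{p}\subset R[\undl{z}]$ that is simultaneously equal to $J_{s,\undl{z}}:_{R[\undl{z}]} f^{\infty}$ for every non-zero $f\in I$ and to $J_{s,\undl{z}}:_{R[\undl{z}]} I^{\infty}$. The strategy is a change-of-variables trick after inverting a non-zero generator of $I$, followed by a primary-decomposition argument to extend the conclusion from generators to arbitrary non-zero elements of $I$.

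First I would reorder the $f_j$ so that $f_1\neq 0$. Working inside $R[f_1^{-1}][\undl{z}]$, the identities $g_i=\sum_k z_{ik}f_k$ can be solved for $z_{i1}$, so introducing the variables $w_i:=g_i$ yields the polynomial-ring identification
\[
R[f_1^{-1}][\undl{z}]\;\cong\;R[f_1^{-1}][w_1,\ldots,w_s,\,\{z_{ik}\}_{k\geqslant 2}],
\]
under which $J_{s,\undl{z}}$ extends to $(w_1,\ldots,w_s)$. Since the quotient of this polynomial ring by $(w_1,\ldots,w_s)$ is a polynomial ring over the domain $R[f_1^{-1}]$, this extension is prime of height $s$. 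Setting $\mathfrak{p}:=J_{s,\undl{z}}:_{R[\undl{z}]} f_1^{\infty}$ as its contraction, one obtains a prime of $R[\undl{z}]$ of height $s$ with $f_1\notin\mathfrak{p}$; moreover $\mathfrak{p}$ is the unique associated prime of $J_{s,\undl{z}}$ not containing $f_1$, and the equality $J_{s,\undl{z}} R[\undl{z}]_{\mathfrak{p}}=\mathfrak{p} R[\undl{z}]_{\mathfrak{p}}$ shows that the $\mathfrak{p}$-primary component of $J_{s,\undl{z}}$ is $\mathfrak{p}$ itself.

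Next I would run the same construction with each non-zero generator $f_j$ in place of $f_1$, producing primes $\mathfrak{p}^{(j)}:=J_{s,\undl{z}}:_{R[\undl{z}]} f_j^{\infty}$ of height $s$. To see $\mathfrak{p}^{(j)}=\mathfrak{p}$, note that $f_j$ is a non-zero element of the domain $R[f_1^{-1}]$, and being a constant in the variables $w_1,\ldots,w_s,\{z_{ik}\}_{k\geqslant 2}$, it cannot lie in the monomial-free ideal $(w_1,\ldots,w_s)$; thus $f_j\notin\mathfrak{p}$, and the uniqueness statement from the previous paragraph applied to $f_j$ forces $\mathfrak{p}^{(j)}=\mathfrak{p}$. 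Consequently every associated prime of $J_{s,\undl{z}}$ other than $\mathfrak{p}$ contains every generator $f_j$, and hence contains $I$.

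Finally, for any non-zero $f\in I$, the same constant-term argument (applied inside $R[f_1^{-1}][\undl{z}]$) gives $f\notin\mathfrak{p}$. Using the standard description, $J_{s,\undl{z}}:_{R[\undl{z}]} f^{\infty}$ equals the intersection of those primary components of $J_{s,\undl{z}}$ whose radical does not contain $f$; by the previous paragraph this collapses to the $\mathfrak{p}$-primary component, namely $\mathfrak{p}$. The same reasoning yields $J_{s,\undl{z}}:_{R[\undl{z}]} I^{\infty}=\mathfrak{p}$, proving both equalities and the primeness/height claim. I expect the main obstacle to be the passage from generators to arbitrary $f\in I$: the change-of-variables trick only goes through when one inverts a generator, so a direct localization at a general $f$ is unavailable. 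This is precisely what is bypassed by establishing that $\mathfrak{p}$ is independent of which non-zero generator is inverted and then invoking primary decomposition to read off the colon for arbitrary non-zero $f\in I$.
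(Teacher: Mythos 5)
Your proof is correct, and while it shares with the paper the central change-of-variables device after inverting a single nonzero generator (which exhibits $J_{s,\undl{z}}\,R[\undl{z}]_{f_1}$ as a height-$s$ prime), the descent back to $R[\undl{z}]$ is handled by a genuinely different mechanism. The paper bounds $\height\bigl(J_{s,\undl{z}}:f^\infty\bigr)\leq s$ directly by Krull's altitude theorem after localizing at $f$, and then proves $J_{s,\undl{z}}:I^\infty$ is prime of height $s$ by first invoking \autoref{nonzerodivisor} to replace the generating sequence so that $f_1$ is a non-zerodivisor modulo $J_{s,\undl{z}}:I^\infty$; the saturated ideal then injects into its prime localization, and the two heights force equality. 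You dispense with the automorphism and the non-zerodivisor step entirely: the observation that each nonzero $f_j$ survives as a nonzero constant in the changed variables, hence lies outside $(w_1,\ldots,w_s)$, shows the same height-$s$ minimal prime $\mathfrak{p}$ arises for every admissible $f_j$, and then primary decomposition forces every other associated prime of $J_{s,\undl{z}}$ to contain $I$, so that both colons collapse to $\mathfrak{p}$. Your route carries more bookkeeping with primary components but is self-contained; the paper's route is shorter once \autoref{nonzerodivisor} is in hand and lets the height inequality do more of the work rather than computing $J_{s,\undl{z}}:f^\infty$ explicitly.
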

\begin{proof} 
We clearly have $J_{s,\undl{z}}: I^{\infty}\subseteq J_{s,\undl{z}}: f^{\infty}$. To prove the equality it suffices to show that the  ideal on the left is a prime ideal of height $s$ and the one on the right has height at most $s$. 
Notice that $\height\left(J_{s,\undl{z}}: f^{\infty} \right)\ls \height \left(\left(J_{s,\undl{z}}: f^{\infty}\right)_{f}\right)= \height\left(\left(J_{s,\undl{z}}\right)_{f}\right) \ls s$. 
The last inequality follows by Krull's Altitude Theorem; notice that $\left(J_{s,\undl{z}}\right)_{f}$ is a proper ideal as 
$f\notin \sqrt {J_{s,\undl{z}}}$. 

Since $I$ contains a non-zerodivisor modulo $J_{s,\,\undl{z}}: I^\infty$, 
a general $\kk$-linear combination of $f_1, \ldots, f_u$ is a non-zerodivisor modulo $J_{s,\,\undl{z}}: I^\infty$.
 Hence there exists $A \in GL_{u}(\kk)$ such that ${\begin{bmatrix}g_1\cdots  g_u\end{bmatrix}}^{T}=A\, {\begin{bmatrix}f_1\cdots  f_u\end{bmatrix}}^{T}$ with  $g_1$ a non-zerodivisor modulo $J_{s,\,\undl{z}}: I^\infty$. 
By \autoref{nonzerodivisor} we may replace $f_1, \ldots, f_u$ by $g_1, \ldots, g_u$ to assume $f_1$ is a non-zerodivisor modulo $J_{s,\,\undl{z}}: I^\infty$. 
Notice that  $$\left(J_{s,\,\undl{z}}: I^\infty\right)_{f_1}= \left(J_{s,\,\undl{z}}\right)_{f_1} = \left(\{z_{i,1}+f_1^{-1}\sum_{j=2}^uz_{i,j }f_j \mid 1\ls i\ls s\}\right)R[\undl{z}]_{f_1},$$ which is a prime ideal of height $s$. Since $f_1$ is a non-zerodivisor modulo $\,J_{s,\,\undl{z}}: I^\infty$, it follows that $\, J_{s,\,\undl{z}}: I^\infty$ is a prime ideal of height $s$.  
\end{proof}

The following lemma is needed in the proof of \autoref{reduc}, which in turn provides a way to construct general reductions of  ideals.

\begin{lemma}\label{heights}
Let $\kk$ be an infinite field,   $R$ a finitely generated  $\kk$-algebra, and  $I$  an ideal. If  $J$  
is an ideal generated by $s$ general elements of $I$, then 
$$\dim\left( R/\left(J:_RI^\infty\right)\right) \ls \dim(R) - s$$ 
and 
 $$\dim\left( R /\left( \left(J:_RI^\infty\right) +I \right) \right)\ls \dim(R) - s-1.$$
\end{lemma}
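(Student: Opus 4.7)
The plan is to lift the question to the generic setting and specialize. Set $P := J_{s,\undl{z}} :_{R[\undl{z}]} I^\infty$. For every $\undl{\lambda} \in \AA^{su}_{\kk}$, a relation $g \cdot I^n \subseteq J_{s,\undl{z}}$ yields $\pi_{\undl{\lambda}}(g) \cdot I^n \subseteq \pi_{\undl{\lambda}}(J_{s,\undl{z}}) = J$, so $\pi_{\undl{\lambda}}(P) \subseteq J :_R I^\infty$. Consequently it suffices to bound $\dim R/\pi_{\undl{\lambda}}(P)$ and $\dim R/(\pi_{\undl{\lambda}}(P) + I)$ from above for $\undl{\lambda}$ in a dense open of $\AA^{su}_{\kk}$. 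My tool will be the generic fiber dimension theorem applied to the projection $\Spec R[\undl{z}] \to \AA^{su}_{\kk}$, restricted to the components of $V(P)$ and of $V(P + IR[\undl{z}])$.

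The components of $V(P)$ I would locate through \autoref{prime}. If $I$ is nilpotent then $J :_R I^\infty = R$ and the statement is vacuous, so I may assume some minimal prime $\fq_i$ of $R$ satisfies $I \not\subseteq \fq_i$. For each such $\fq_i$, \autoref{prime} applied to the domain $R/\fq_i$ produces a prime $P_i$ of height $s$ in $(R/\fq_i)[\undl{z}]$, whose preimage $\tilde{P}_i \subseteq R[\undl{z}]$ is prime, contains $J_{s,\undl{z}}$, and does not contain $IR[\undl{z}]$. I would next verify that every (associated, and hence minimal) prime $\wp$ of $P$ contains some $\tilde{P}_i$: such a $\wp$ avoids $IR[\undl{z}]$ by the construction of $P$, hence contains some $\fq_i R[\undl{z}]$ with $I \not\subseteq \fq_i$; modulo $\fq_i R[\undl{z}]$ it is a prime containing $J_{s,\undl{z}}(R/\fq_i)$ but not $I(R/\fq_i)$, which forces containment of $P_i$. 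Thus $V(P) \subseteq \bigcup_i V(\tilde{P}_i)$ with each $V(\tilde{P}_i)$ integral of dimension $\dim(R/\fq_i) + s(u-1) \leq \dim R + s(u-1)$.

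The generic fiber dimension theorem applied to each integral projection $V(\tilde{P}_i) \to \AA^{su}_{\kk}$ then produces a dense open $U_i$ on which every fiber is either empty or of dimension exactly $\dim V(\tilde{P}_i) - su \leq \dim R - s$. Intersecting the finitely many $U_i$ and identifying the fiber of $V(P)$ over $\undl{\lambda}$ with $V(\pi_{\undl{\lambda}}(P)) \subseteq \Spec R$ proves the first inequality. For the second, I observe that $IR[\undl{z}] \not\subseteq \tilde{P}_i$: otherwise a power of $I(R/\fq_i)$ would lie in $J_{s,\undl{z}}(R/\fq_i)$, forcing $P_i = (R/\fq_i)[\undl{z}]$ and contradicting properness. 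Since $R[\undl{z}]/\tilde{P}_i$ is a Noetherian catenary domain, adjoining an element of $I$ outside $\tilde{P}_i$ strictly increases the height, so $\dim R[\undl{z}]/(\tilde{P}_i + IR[\undl{z}]) \leq \dim R + s(u-1) - 1$; rerunning the generic fiber argument on the finitely many irreducible components of each $V(\tilde{P}_i + IR[\undl{z}])$ then yields $\dim R/(\pi_{\undl{\lambda}}(P) + I) \leq \dim R - s - 1$. The main obstacle I anticipate is the embedded-prime bookkeeping that guarantees every associated prime of the colon ideal $P$ contains one of the finitely many $\tilde{P}_i$, together with the verification that the finite intersection of the generic loci in $\AA^{su}_{\kk}$ remains a nonempty Zariski open subset.
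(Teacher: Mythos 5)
Your argument is correct, but it takes a route that differs from the paper's, which disposes of the lemma in two lines: the first inequality is simply cited from \cite[Lemma 2.2]{FPU2}, and the second is deduced from the first by observing that $I$ contains a non-zerodivisor modulo $J:_RI^\infty$ (so passing to $(J:_RI^\infty)+I$ drops the dimension by at least one). You instead supply a self-contained proof of both bounds: you lift to the generic ideal $P = J_{s,\undl{z}}:_{R[\undl{z}]}I^\infty$, show via a primary-decomposition argument that every minimal prime of $P$ contains the preimage $\tilde P_i$ of the height-$s$ prime produced by \autoref{prime} over the appropriate minimal prime $\fq_i$ of $R$ with $I\not\subseteq\fq_i$, bound $\dim R[\undl{z}]/\tilde P_i \leq \dim R + s(u-1)$, and then invoke semicontinuity of fiber dimension for the projection to $\AA^{su}_\kk$ and the identification of $V(P)_{\undl\lambda}$ with $V(\pi_{\undl\lambda}(P))$. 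For the second inequality you rerun this machinery on $V(\tilde P_i + IR[\undl{z}])$, using that $I\not\subseteq\tilde P_i$ (which you verify correctly via the properness of $P_i$ from \autoref{prime}) to shave one off the dimension before specializing. All the steps check out, including the bookkeeping you flagged as a concern: associated primes of the saturation $J_{s,\undl{z}}:I^\infty$ do indeed avoid $I$ (by the description of the saturation via primary components), and a finite intersection of dense opens in $\AA^{su}_\kk$ is a dense open. What the paper's proof buys over yours is brevity, at the cost of outsourcing the real content to a reference; what yours buys is that it is fully self-contained and essentially reconstructs the cited lemma. Your proof of the second inequality is more laborious than the one-line nonzerodivisor argument used in the paper, but it is not wrong.
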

\begin{proof}
The first inequality is \cite[Lemma 2.2]{FPU2}.  The second inequality follows from the first because $I$ contains a non-zerodivisor modulo  
$\,J:I^\infty$.
\end{proof}

If either the ambient ring $R$ is local or the ideal $I$ is generated by forms of the same degree and $R$ is a positively graded $\kk$-algebra, then $d$ general elements of $I$ generate a reduction, where $d=\dim (R)$.  The following proposition   gives a method to construct finite sets of general reductions for arbitrary ideals in any Noetherian $\kk$-algebra (see \autoref{r general red}). 

\begin{proposition}\label{reduc}
Let $\kk$ be an infinite field,   $R$  a finitely generated  $\kk$-algebra of dimension $d$, and  $I$  an ideal of positive height. 
If $R$ is positively graded and $I$ is generated by forms of the same degree, set $f:=0\in R$. Otherwise,
let $f\in I$ be an element not contained in any minimal prime ideal of  $R$ of dimension $d$. 
If $J$ is an ideal generated by $d$ general elements of $I$, then $J+(f)$ is a reduction of $I$. 
\end{proposition}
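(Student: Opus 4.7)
The plan is to prove $K := J + (f)$ is a reduction of $I$ via the local criterion: $K$ is a reduction iff $KR_{\fm}$ is a reduction of $IR_{\fm}$ for every maximal $\fm$ of $R$. This splits into two cases: (i) at $\fm \supseteq I$ a genuine local reduction is needed, and (ii) at $\fm \not\supseteq I$ one needs $K \not\subseteq \fm$, equivalently $V(K) \subseteq V(I)$. Case (i) will rely on the generic nature of the generators of $J$, while (ii) is where the element $f$ becomes indispensable.

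First I would reduce to $R$ a domain by factoring out each minimal prime $\fq$ of $R$: if $\dim R/\fq < d$ then \autoref{heights} applied in $R/\fq$ with $s = d$ gives $\bar I^k \subseteq \bar J$, and combined with the fiber-ring argument below (which still goes through with $d > \dim R/\fq$ general elements) $\bar J$ alone is already a reduction of $\bar I$; if $\dim R/\fq = d$ then by hypothesis $\bar f \neq 0$. The graded case $f = 0$ is then handled by the standard argument: since $I$ is generated in one degree, $\F(I)$ is standard graded over $\kk$ of dimension $\ell(I) \ls d$, and $d$ $\kk$-general linear combinations of generators of $I$ give a homogeneous system of parameters in $\F(I)_1$, so $J$ is a reduction of $I$.

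In the non-graded domain case, I would handle (ii) by applying \autoref{heights} in $R/(f)$ with $s = d$: since $f \neq 0$ in the domain $R$ we have $\dim R/(f) \ls d - 1$, so $\dim (R/(f))/(\bar J : \bar I^\infty) \ls -1$, forcing $\bar J : \bar I^\infty = R/(f)$ and hence $\bar I^k \subseteq \bar J$ in $R/(f)$ for some $k$. Lifting to $R$ gives $I^k \subseteq J + (f) = K$, so $V(K) \subseteq V(I)$, establishing (ii). For (i), \autoref{heights} applied directly in $R$ with $s = d$ gives $\dim R/(J:I^\infty) \ls 0$ and $(J:I^\infty) + I = R$; at any $\fm \supseteq I$ this forces the existence of $a \in J : I^\infty$ with $a \notin \fm$, so $(IR_{\fm})^N \subseteq JR_{\fm}$ for some $N$. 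Combined with the $\kk$-general structure of the generators of $J$—which descend to $\kk$-general elements of $\F(IR_{\fm})_1$ over the infinite residue field $k(\fm)$—they form a homogeneous system of parameters in the standard graded algebra $\F(IR_{\fm})$ of dimension $\ls d$, so $JR_{\fm}$ and a fortiori $KR_{\fm}$ is a reduction of $IR_{\fm}$.

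The principal technical obstacle I anticipate is the uniform genericity required in (i): at each fixed $\fm \supseteq I$ a $\kk$-general $\undl{\lambda}$ yields the reduction, but pinning down a single Zariski open subset of $\AA^{du}_{\kk}$ that works simultaneously at all $\fm \supseteq I$ requires a global mechanism. The resolution I would pursue is to realize the failure locus in (i) as a specialization of a single closed subvariety built globally from the generic ideal $J_{d,\undl{z}}$ over $R[\undl{z}]$—\autoref{prime} already isolates the generic excess prime $J_{d,\undl{z}} : I^\infty$ of height $d$, and the Fitting-type ideals defining the system-of-parameters condition in the fiber rings descend from this construction to cut out a single proper closed subset of $\AA^{du}_{\kk}$ once and for all.
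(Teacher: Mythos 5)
Your plan---local criterion, splitting into $\fm\not\supseteq I$ and $\fm\supseteq I$, \autoref{heights} for the former, and a fiber-ring genericity argument for the latter---is sound in outline, and your handling of the graded case and of case (ii) parallels the paper's first step (that $J+(f)$ and $I$ have the same radical). But case (i) has exactly the gap you flag, and the proposed resolution is not an argument. For each fixed $\fm\in V(I)$ the set of $\underline{\lambda}$ for which the images of the $d$ generic elements form a homogeneous system of parameters in $\F(IR_{\fm})$ is indeed a dense $\kk$-open $U_{\fm}\subseteq\AA^{du}_{\kk}$ (one also needs the small observation that a proper $k(\fm)$-subspace of $k(\fm)^{u}$ meets $\kk^{u}$ properly, to handle the residue field extension). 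However, $V(I)$ typically contains infinitely many maximal ideals, and $\bigcap_{\fm}U_{\fm}$ need not contain any nonempty open set. Your sketched fix---that ``Fitting-type ideals defining the system-of-parameters condition\ldots\ descend\ldots\ to cut out a single proper closed subset''---is not carried out and is not a consequence of \autoref{prime}: the rings $\F(IR_{\fm})$ are a genuinely varying family over varying residue fields, and what you would actually need is a uniform statement such as generic finiteness of $\gr_I(R)/\mathcal{I}$ over $R$, where $\mathcal{I}$ is generated by the images of the $d$ general elements; that requires its own proof.

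The paper sidesteps the family $\{\F(IR_{\fm})\}$ entirely and imposes a single genericity condition in the single ring $\gr_I(R)$. Following Xie's Proposition~2.3, for general $\underline{\lambda}$ the images of the $d$ elements in $[\gr_I(R)]_{1}=I/I^{2}$ form a filter-regular sequence with respect to $\gr_I(R)_{+}$; this is a matter of avoiding the finitely many relevant associated primes of $\gr_I(R)$, hence is one Zariski-open condition on $\underline{\lambda}$. This yields the global Artin--Rees-type identity $J\cap I^{n+1}=JI^{n}$ for $n\gg 0$ by \cite[Lemma~8.5.11]{HS}. Combined with the consequence of \autoref{heights} that $(J:I^{\infty})+I=R$, hence $I^{n}_{\fp}\subseteq J_{\fp}$ for $n\gg 0$ and every $\fp\in V(I)$, this gives $I^{n+1}_{\fp}=J_{\fp}\cap I^{n+1}_{\fp}=J_{\fp}I^{n}_{\fp}$, so $J_{\fp}$, and a fortiori $(J+(f))_{\fp}$, is a reduction of $I_{\fp}$. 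The missing idea in your proof is precisely this: arrange the genericity once, inside $\gr_I(R)$, rather than prime by prime in the varying family of special fiber rings.
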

\begin{proof}
Set $\,H = J +(f)$ and let $^{^{\tratto}}$ denote the images in $\overline{R}=R/(f)$.  Applying \autoref{heights} to the images of $H$ and $I$ in $\overline{R}$ we observe that $\,\overline{H}:_{\overline{R}}\overline{I}^\infty = \overline{R}$, and hence $H$ and $I$ have the same radical. 
Since $\,HI^{n-1}: I^n\subseteq HI^{n}: I^{n+1}$ for every $n\in \NN$, this sequence of ideals stabilizes for $n\gg 0$. Therefore, it suffices to prove that $H_{\fp}$ is a reduction of $I_{\fp}$ for every $\fp\in V(I)$. 
By \autoref{heights} we have $\pars{J: I^\infty} + I  = R$ and therefore $I_{\fp}^n \subseteq J_{\fp}$ for $n\gg 0$ and every $\fp \in V(I)$. 
Let $\gr_I(R)=\oplus_{n\gs 0}I^n/I^{n+1}$. 
We can proceed as in \cite[Proposition 2.3]{Xie} to show that the images in $I/I^2=\left[\gr_I(R)\right]_1$ 
of the $d$ generators of $J$ form a filter regular sequence with respect to $\gr_I(R)_+=\oplus_{n> 0}I^n/I^{n+1}$. Therefore, $J$ is generated by a superficial sequence  of $I$. 
Then, 
 $J\cap I^{n+1} = JI^{n} $ for $n\gg 0$ by \cite[Lemma 8.5.11]{HS}. Thus, 
$J_\fp$ is a reduction of $I_\fp$ for every $\fp\in V(I)$. Thus, $H_{\fp}$ is a reduction of $I_\fp$, which finishes the proof. 
\end{proof}

\begin{definition}\label{r general red}
With  assumptions and notations as in \autoref{reduc}, we say that $K_1, \ldots, K_n$ are {\it{general reductions}} of $I$, if $K_i=J_i+(f)$, where $J_1, \ldots, J_n$ are $n$ ideals generated by $d$ general elements of $I$ as in \autoref{Notation}.
\end{definition}

\begin{remark}\label{remark_r}
Notice that for each $r>0$ we have $K_i':=J_i+(f^r$) is a reduction of $I$ by \autoref{reduc}. 
Furthermore, for $\fm \in V(I)$ a fixed maximal ideal we have $(J_i)_{\fm}$ is a reduction of $I_{\fm}$ and   $(K_i')_{\fm}=(J_i)_{\fm}$ if  $r> r(I_{\fm})$ \cite[Theorem 8.6.6]{HS}. If in addition $\ell(I_{\fm})=d$, then $(K_i')_{\fm}$ is a minimal reduction of $I_{\fm}$. 
In case $R_{\fm}$ is regular and $\dim(R_\fm)=d$, we also have $(K_i')_{\fm}=(J_i)_{\fm}$ for $r\gs d$ by \cite[Corollary 13.3.4]{HS}. 
We also call the ideals $K_i'$ {\it general reductions} of $I$ for any choice of $r$.
\end{remark}

\begin{lemma} \label{S2 localizes}
Let $R$ be a Cohen-Macaulay ring, $s$ a non-negative  integer, and $I$ an ideal satisfying $G_{s+1}$. Then $I$ is weakly $s$-residually $S_2$ if and only if $I_\fp$ is weakly $s$-residually $S_2\,$ for every $\fp \in V(I)$. 
\end{lemma}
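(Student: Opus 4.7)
The plan is to leverage two standard properties of Serre's condition $S_2$: it is preserved under localization, and it can be checked locally on $\Spec(R)$. The forward implication will therefore amount to lifting a geometric residual intersection from $R_\fp$ back to $R$, while the backward implication will reduce to analyzing the residual intersection at each prime of $V(K)$.

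For $(\Rightarrow)$, I would begin with a prime $\fp\in V(I)$ and a geometric $i$-residual intersection $K'=\mathfrak{a}':I_\fp$ in $R_\fp$ with $0\leq i\leq s$. Lifting a set of $i$ generators of $\mathfrak{a}'$ to $I$ produces an $i$-generated ideal $\mathfrak{b}\subseteq I$ with $\mathfrak{b}R_\fp=\mathfrak{a}'$. Since $I$ satisfies $G_{s+1}$, I would then modify these generators by adding sufficiently generic $\kk$-linear combinations of generators of $I$ --- the generic-element technique of Artin--Nagata and Huneke--Ulrich --- in order to obtain an $i$-generated ideal $\mathfrak{a}\subseteq I$ with $\mathfrak{a}R_\fp=\mathfrak{a}'$ such that $K:=\mathfrak{a}:I$ satisfies $\height(K)\geq i$ and $\height(I+K)>i$ globally. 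Then $K$ is a geometric $i$-residual intersection of $I$ in $R$ with $KR_\fp=K'$, so the hypothesis yields that $R/K$ is $S_2$, and localizing at $\fp$ gives that $R_\fp/K'$ is $S_2$.

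For $(\Leftarrow)$, I would fix a geometric $i$-residual intersection $K=\mathfrak{a}:I$ of $I$ in $R$ with $0\leq i\leq s$, and check that $(R/K)_\fq$ is $S_2$ for each $\fq\in\Spec(R)$. The case $K\not\subseteq\fq$ is vacuous. If $K\subseteq\fq$, observe that chains of primes contained in a minimal prime of $K$ (resp.\ of $I+K$) lying in $\fq$ are the same in $R$ and in $R_\fq$, so the height conditions $\height(K)\geq i$ and $\height(I+K)>i$ descend. When $I\subseteq\fq$, so that $\fq\in V(I)$, either $\mathfrak{a}_\fq=I_\fq$ --- in which case $K_\fq=R_\fq$ and $(R/K)_\fq=0$ --- or $K_\fq=\mathfrak{a}_\fq:I_\fq$ is a geometric $i$-residual intersection of $I_\fq$, to which the hypothesis at $\fp=\fq$ applies. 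When instead $I\not\subseteq\fq$, we have $I_\fq=R_\fq$ and $K_\fq=\mathfrak{a}_\fq$; here I would invoke the classical consequence of Artin--Nagata theory that, under $G_{s+1}$, the $i$-generated ideal $\mathfrak{a}$ is a complete intersection of height $i$, so $R/\mathfrak{a}$ is Cohen--Macaulay and in particular $(R/K)_\fq$ is $S_2$.

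The principal technical difficulty lies in the forward direction: producing a lift $\mathfrak{a}$ of $\mathfrak{a}'$ whose induced residual intersection $K=\mathfrak{a}:I$ has the required height behavior globally, not merely after localization at $\fp$. The $G_{s+1}$ hypothesis enters essentially here, by providing the flexibility, via generic choices, to preserve the expected heights at primes away from $\fp$ without disturbing the behavior at $\fp$.
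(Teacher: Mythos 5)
Your backward direction is essentially the paper's argument: fix a geometric $i$-residual intersection $K=\mathfrak{a}:I$, check $S_2$-ness of $R/K$ locally at each $\fq\in V(K)$, and split into $\fq\in V(I)$ (apply the local hypothesis, after noting that the residual-intersection height conditions descend to $R_\fq$) and $\fq\notin V(I)$ (then $K_\fq=\mathfrak{a}_\fq$ is a complete intersection, hence Cohen--Macaulay). Your extra care about the degenerate case $\mathfrak{a}_\fq=I_\fq$ and the explicit height descent are welcome. One small inaccuracy: you say that ``under $G_{s+1}$, the $i$-generated ideal $\mathfrak{a}$ is a complete intersection of height $i$.'' What the argument actually needs, and what holds, is that $\mathfrak{a}_\fq$ is a complete intersection in $R_\fq$: this follows only from $\mathfrak{a}_\fq=K_\fq$ being generated by $i$ elements and having height at least $i$ in the Cohen--Macaulay local ring $R_\fq$, with no need to invoke $G_{s+1}$ or Artin--Nagata at that point.

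The forward direction is where your proposal departs from the paper and where there is a genuine gap. The paper does not prove this implication at all; it cites the known fact that the weakly $s$-residually $S_2$ property localizes, namely \cite[Lemma~2.1(a)]{CPU}. You instead try to lift a geometric $i$-residual intersection $K'=\mathfrak{a}':I_\fp$ of $I_\fp$ to one of $I$ in $R$. The problematic step is the claim that one can replace the lifted generators $b_j$ by $b_j+c_j$ where $c_j$ are ``sufficiently generic $\kk$-linear combinations of generators of $I$'' so as to force the global height conditions \emph{while preserving} $\mathfrak{a}R_\fp=\mathfrak{a}'$. For a generic $c_j=\sum_k\lambda_{jk}f_k\in I$, the image of $c_j$ in $R_\fp$ is a generic element of $I_\fp$, which does not lie in the proper subideal $\mathfrak{a}'\subsetneq I_\fp$; hence $(b_1+c_1,\ldots,b_i+c_i)R_\fp\neq\mathfrak{a}'$, and your construction fails to produce a lift of $\mathfrak{a}'$ at all. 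You correctly identify this lifting as the crux where $G_{s+1}$ must enter, but the sketch does not resolve it --- the modifications have to be taken from a much smaller ideal (roughly, one that vanishes at the relevant localization), or one must argue prime-by-prime at each $\fq\subseteq\fp$ rather than lifting uniformly. Since this is precisely the content of \cite[Lemma~2.1(a)]{CPU}, the cleanest fix is to cite that lemma, as the paper does.
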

\begin{proof}
The property of being weakly $s$-residually $S_2$ localizes (see  \cite[Lemma 2.1(a)]{CPU}). For the converse, let $i \ls s$, let  $K=J: I$ be a geometric $i$-residual intersection of $I$, and let $\fp \in V(K)$. If $\fp \in V(I)$, then $R_{\fp}/K_{\fp}$ is $S_2$ by our assumption. If $\fp \not\in V(I)$, then $K_{\fp}=J_{\fp}$ is a complete intersection, and therefore $R_{\fp}/K_{\fp}$ is  Cohen-Macaulay. 
\end{proof}

\autoref{coreGLMR} extends   \cite[Theorem 4.5]{CPU} from local rings to finitely generated algebras over a field.  We recall that an ideal $I$ is of {\it linear type} if the natural map between its symmetric algebra and Rees algebra is an isomorphism. If $I$ is of linear type it has no proper reductions and hence $\core(I)=I$.

\begin{theorem}\label{coreGLMR}
Let $\kk$ be an infinite field,  $R$ a Cohen-Macaulay finitely generated  $\kk$-algebra of dimension $d$, and  $I$ an ideal of positive height. 
 Assume that $I$ satisfies $G_d$ and is weakly $(d-2)$-residually $S_{2}$. Let $ f\in I$ be as in \autoref{r general red}.   Then there exist positive integers $n$ and $r$ such that 
  $$\core(I)=K_1\cap\cdots \cap K_n\,,$$ where  $K_i = J_i+(f^r)$ for $1\ls i\ls n$  are general reductions of $I$  as in \autoref{remark_r}. 
  If in addition $R$ is regular, then $r$ can be chosen to be $d$. 
\end{theorem}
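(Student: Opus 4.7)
The containment $\core(I)\subseteq K_1\cap\cdots\cap K_n$ is immediate, since each $K_i$ is a reduction of $I$ by \autoref{reduc} together with \autoref{remark_r}, and the core lies inside every reduction of $I$.

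For the reverse containment, I would show that $K_1\cap\cdots\cap K_n\subseteq L$ for every reduction $L$ of $I$, by verifying this inclusion after localizing at each maximal ideal $\fm$ of $R$. When $\fm\notin V(I)$, every reduction of $I$ localizes to $R_\fm$, so the inclusion is automatic. For $\fm\in V(I)\cap\mSpec(R)$, the Cohen-Macaulay local ring $R_\fm$ has dimension $d_\fm\ls d$, and the ideal $I_\fm$ inherits both $G_{d_\fm}$ and weakly $(d_\fm-2)$-residually $S_2$ from the global hypotheses---the first by direct localization and the second via \autoref{S2 localizes}, using that the residual condition weakens as its parameter decreases. I would then apply the local result \cite[Theorem 4.5]{CPU} to $I_\fm$. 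For $r$ strictly larger than the reduction number $r(I_\fm)$, \autoref{remark_r} gives $(K_i)_\fm=(J_i)_\fm$, and these are minimal reductions of $I_\fm$. A generic global choice $\undl{\lambda}\in\AA^{nsu}_\kk$ specializes to a generic local choice, so that $\bigcap_{i=1}^n (K_i)_\fm=\core(I_\fm)\subseteq L_\fm$ once $n$ is large enough, settling the local containment.

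The principal obstacle is to produce uniform values of $n$, $r$, and a single generic condition on $\undl{\lambda}$ that work simultaneously at every $\fm\in V(I)\cap\mSpec(R)$, as $V(I)$ can contain infinitely many maximal ideals. I would handle this via Noetherianity of $R$: fix a countable sequence of general reductions $K_1,K_2,\ldots$ of $I$ and form the descending chain of ideals $M_n=K_1\cap\cdots\cap K_n$; this chain stabilizes globally at some finite index $N$, and its localization at any $\fm$ also stabilizes no later than $N$. Combined with \cite[Theorem 4.5]{CPU} applied locally, this forces $(M_N)_\fm=\core(I_\fm)$ at each relevant $\fm$, provided the $K_i$ are sufficiently generic to satisfy the local Zariski-open conditions; this can be arranged by generic flatness together with Noetherian induction on $V(I)$, stratifying it into finitely many locally closed pieces on which the local generic condition is uniform. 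A uniform bound on $r$ follows from upper semicontinuity of the reduction number on $\Spec(R)$, and in the regular case $r=d$ is enough by \cite[Corollary 13.3.4]{HS}, as already recorded in \autoref{remark_r}.
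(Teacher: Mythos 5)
Your proposal correctly identifies the overall shape of the argument — reduce to the local case, apply \cite[Theorem~4.5]{CPU}, and then globalize — and you even flag the genuine obstacle, namely producing a \emph{uniform} $n$, $r$, and a single nonempty Zariski-open locus of parameters $\undl{\lambda}$ that works at all the infinitely many maximal ideals in $V(I)$. But the mechanism you propose for overcoming that obstacle (generic flatness plus a Noetherian-induction stratification of $V(I)$ into locally closed pieces, together with an appeal to ``upper semicontinuity of the reduction number'') is vague in exactly the places where the difficulty is concentrated, and I don't believe it closes the gap. Two specific problems: (i) you never verify that the open conditions on $\undl{\lambda}$ coming from \cite[Theorem~4.5]{CPU} at the various maximal ideals have nonempty common intersection, and a stratification of $V(I)$ alone does not supply this — you would need to show, stratum by stratum, that a single choice of $\undl{\lambda}$ works, which is precisely what must be proved; (ii) the reduction number $r(I_\fm)$ is not known to be upper semicontinuous on $\Spec(R)$ in the generality you need, so the claimed uniform bound on $r$ is unjustified. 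You also implicitly assert that $(J_i)_\fm$ are minimal reductions of $I_\fm$ at every $\fm\in V(I)\cap\mSpec(R)$, but this requires $\ell(I_\fm)=\dim R_\fm=d$, which you do not establish.

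The paper's proof avoids all of these difficulties with a single structural observation that is absent from your proposal: under the hypotheses $G_d$ and weakly $(d-2)$-residually $S_2$, the set of primes $\fp$ at which $I_\fp$ is not of linear type is a \emph{closed} subset of $\Spec R$ (because the kernel $\A$ of $\Sym(I)\to\R(I)$ is supported on a closed set) consisting only of primes of height $\gs d$, hence is a \emph{finite} set of maximal ideals $\fm_1,\ldots,\fm_t$; and at each $\fm_i$ one checks $\ell(I_{\fm_i})=d$, so the local theorem applies. Away from these finitely many maximal ideals, $I_\fp$ is of linear type, so $\core(I_\fp)=I_\fp$ and any reduction $K$ satisfies $K_\fp=I_\fp$, making the local containment automatic. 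With only finitely many relevant primes, there is no uniformity problem to solve: one simply takes the maximum of the finitely many local bounds for $n$ and $r$, and intersects the finitely many local open conditions on $\undl{\lambda}$. I'd encourage you to try to prove the finiteness of the non-linear-type locus directly from the residual hypotheses; once you have it, the rest of your outline goes through cleanly and essentially coincides with the paper's argument.
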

\begin{proof}
For every $\fp \in V(I)$ the ideal $I_{\fp}$ is $G_d$ and from \autoref{S2 localizes} it follows that $I_{\fp}$ is  weakly $(d-2)$-residually $S_2$.  
Hence  \cite[Corollary~3.6  (b)]{CEU} and \cite[Theorem 2.3.2]{VasBlowup}  show that $I_{\fp}$ is of linear type for every $\fp \in V(I)$ with $\height(\fp) <d$. Clearly, $I_{\fp}$ is of linear type for every prime $\fp \not\in V(I)$.

Let $\Sym(I)$ and $\R(I)$ be the symmetric algebra and the Rees algebra of $I$, respectively. Consider the following exact sequence $$0\to \A\to \Sym(I)\rightarrow \R(I)\to 0\,.$$ The ideal $\A$ is generated by homogeneous elements of degree at most $e$, for some non-negative integer $e$. 
Therefore, ${\rm{Supp}}_{R}{(\A)}=\bigcup \limits_{i=0}^e {\rm{Supp}}_R(\A_i)$ is a closed subset of ${\rm{Spec}}(R)$. It follows that the set of prime ideals $\fp$ such that $I_{\fp}$ is not of linear type consists only of finitely many maximal ideals, say $\fm_1, \ldots, \fm_t$. 
Notice that $\ell(I_{\fm_i})=d$ for each $1\ls i \ls t$; indeed, if $\ell(I_{\fm_i})<d$, then $I_{\fm_i}$ is generated by $d-1$ elements according to \cite[Lemma 2.1(g)]{CPU} and hence $I_{\fm_i}$ would be  of linear type  by \cite[Corollary 3.6 (b)]{CEU} and \cite[Theorem 2.3.2]{VasBlowup}.

The ideals $I_{\fm_i}$ have analytic spread $d$,  satisfy $G_d$, and are weakly $(d-2)$-residually $S_{2}$, hence are weakly $(d-1)$-residually $S_{2}$ (see \cite[Proposition  3.4 (a)]{CEU}). 
Applying  \cite[Theorem 4.5]{CPU} and \autoref{remark_r} to the finitely many ideals $I_{\fm_i}$, we obtain that  $\core(I_{\fm_i})=(K_1)_{\fm_i} \cap \cdots \cap (K_n)_{\fm_i}$ for some integer $n$, where  $K_1,\ldots, K_n$   are general  reductions of $I$ with $r=1+\max\{r(I_{\fm_i})\mid 1 \ls i \ls t\}$ or with $r=d$ in case $R$ is  regular. 

We claim that $\core(I)= K_1 \cap \cdots \cap K_n$. Clearly, $\core(I) \subseteq K_1 \cap \cdots \cap K_n$, because $K_1, \ldots, K_n$ are reductions of $I$ by \autoref{reduc}. To show the reverse inclusion, let $K$ be any reduction of $I$. 
We need to show that $K_1 \cap \cdots \cap K_n \subseteq K$, or equivalently $(K_1)_{\fp} \cap \cdots \cap (K_n)_{\fp} \subseteq K_{\fp}$ for every $\fp \in {\rm{Spec}}(R)$. If $\fp \not\in \{\fm_1, \ldots, \fm_t\}$, then $I_{\fp}$ is of linear type. Hence $K_{\fp}=I_{\fp}$ and the assertion holds trivially. Otherwise, $(K_1)_{\fp} \cap \cdots \cap (K_n)_{\fp} =\core(I_{\fp}) \subset K_{\fp}$. 
\end{proof}

\autoref{coreGLMR} and its proof allows us to show that under the assumptions therein the core of $I$ localizes (cf. \cite[Theorem~4.8]{CPU}).

\begin{corollary}\label{corelocalizes}
If the hypotheses of \autoref{coreGLMR} hold, then $\core(I_{\fp})=(\core(I))_{\fp}\,$ for every $\fp\in {\rm Spec} (R)$. 
\end{corollary}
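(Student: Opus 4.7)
The plan is to derive the corollary directly from the formula $\core(I)=K_1\cap\cdots\cap K_n$ provided by \autoref{coreGLMR}, exploiting the fact that a \emph{finite} intersection commutes with localization even though an a priori infinite intersection of reductions need not. Fix $\fp\in\Spec(R)$. Localizing the formula gives
$$(\core(I))_{\fp}=(K_1)_{\fp}\cap\cdots\cap(K_n)_{\fp}.$$
Since each $K_i$ is a reduction of $I$ by \autoref{reduc}, each $(K_i)_{\fp}$ is a reduction of $I_{\fp}$, and consequently $\core(I_{\fp})\subseteq (K_i)_{\fp}$ for every $i$. This yields the containment $\core(I_{\fp})\subseteq(\core(I))_{\fp}$ without any further work.

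For the reverse inclusion I would split cases according to whether $I_{\fp}$ is of linear type. The proof of \autoref{coreGLMR} already identifies the primes at which $I$ fails to be of linear type as a finite set $\{\fm_1,\ldots,\fm_t\}$ of maximal ideals. If $\fp\notin\{\fm_1,\ldots,\fm_t\}$, then $I_{\fp}$ is of linear type, so it has no proper reductions and $\core(I_{\fp})=I_{\fp}$; combining this with the trivial inclusion $(\core(I))_{\fp}\subseteq I_{\fp}$ coming from $\core(I)\subseteq I$ gives the desired equality. If $\fp=\fm_i$, the hypotheses transfer to $I_{\fm_i}$ and \cite[Theorem 4.5]{CPU}, applied exactly as in the proof of \autoref{coreGLMR}, yields $\core(I_{\fm_i})=(K_1)_{\fm_i}\cap\cdots\cap(K_n)_{\fm_i}$, which by the displayed identity above equals $(\core(I))_{\fm_i}$.

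No serious obstacle is anticipated: the corollary is essentially a repackaging of the local-global machinery already established in the proof of \autoref{coreGLMR}, and the only conceptual point is the observation that having realized $\core(I)$ as a finite intersection of explicit reductions is precisely what makes the otherwise subtle behavior of the core under localization straightforward here.
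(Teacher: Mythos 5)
Your proof is correct and follows essentially the same route as the paper: localize the finite intersection $\core(I)=K_1\cap\cdots\cap K_n$ from \autoref{coreGLMR}, and split into the cases $\fp\in\{\fm_1,\ldots,\fm_t\}$ (where the local core formula from \cite[Theorem 4.5]{CPU} applies) and $\fp\notin\{\fm_1,\ldots,\fm_t\}$ (where $I_\fp$ is of linear type so both sides equal $I_\fp$). The only cosmetic difference is that you establish the inclusion $\core(I_\fp)\subseteq(\core(I))_\fp$ uniformly before treating the cases, whereas the paper just verifies the equality directly in each case.
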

\begin{proof} 
In the proof of \autoref{coreGLMR} we showed that $\core(I_{\fm_i})=(K_1)_{\fm_i} \cap \cdots \cap (K_n)_{\fm_i}$, which is $\left(\core(I)\right)_{\fm_i}$  by \autoref{coreGLMR}. If $\fp \not\in \{\fm_1, \ldots, \fm_t\}$, then   $I_{\fp}$ is of linear type, therefore   $\core(I_{\fp})=I_{\fp}$ and $(\core(I))_{\fp}=(K_1)_{\fp} \cap \cdots \cap (K_n)_{\fp} =I_{\fp}$.
\end{proof}

\begin{remark}\label{onlym}
If in addition to the assumptions of  \autoref{coreGLMR} the ring $R$ is a positively graded $\kk$-algebra with maximal homogeneous ideal $\fm$ and the ideal  $I$ is homogeneous, then we can replace the assumption that $I$ is weakly $(d-2)$-residually $S_{2}$ by the hypothesis that $I_\fm$ is weakly $(d-2)$-residually $S_{2}$. In addition,  $r$ can be chosen to be $1+r(I_{\fm})$.
\end{remark}
\begin{proof}
Following the proof of \autoref{coreGLMR}, it suffices to show that $I_\fp$ is of linear type whenever $\fp \in V(I)$ and $\fp\neq \fm$.  Since $I$ is homogeneous, the minimal prime ideals of ${\rm Supp}_R(\A)$ are homogeneous and hence are all contained in $\fm$. On the other hand, if $\fp\subsetneq \fm$, then $I_\fp$ is of linear type by our assumption on $I_\fm$ (see \cite[Corollary~3.6  (b)]{CEU}  and \cite[Theorem 2.3.2]{VasBlowup}). 
\end{proof}

In the case of ideals generated by forms of the same degree, we have the following simpler version of \autoref{coreGLMR}.

\begin{corollary}\label{remCo} 
Let $\kk$ be an infinite field,   $R$  a Cohen-Macaulay positively graded $\kk$-algebra of dimension $d$, and  $\fm$  the homogeneous maximal ideal of $R$. Let $I$ be an ideal of positive height generated by homogeneous elements of  the same degree $\delta$.  If $I$ satisfies $G_d$ and $I_\fm$ is weakly $(d-2)$-residually $S_{2}$, then there exists a positive integer $n$ such that    $$\core(I)=J_1\cap\cdots \cap J_n\,,$$
where $J_1,\ldots, J_n$  are generated by $d$ general elements of $I$ with respect to a generating set of $I$ contained in  $I_{\delta}$ $($see \autoref{notation_general}$)$.   
 In particular, $$\core(I)={\rm{gradedcore}}(I).$$
\end{corollary}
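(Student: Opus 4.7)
The plan is to deduce the corollary directly from \autoref{coreGLMR} together with \autoref{onlym}, specialising to the graded setting. The key observation is that, under the hypotheses here, \autoref{reduc} and \autoref{r general red} allow us to take $f = 0$, so the general reductions of $I$ produced by the main theorem are simply of the form $K_i = J_i$, where each $J_i$ is generated by $d$ general elements of $I$.

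First I would fix a homogeneous generating sequence $\underline f = f_1, \ldots, f_u$ of $I$ contained in $I_\delta$. With the notation of \autoref{Notation}, the ideal $J_{d,\underline z}$ is then generated by $d$ $\kk$-linear combinations of $f_1,\ldots,f_u$, so for every choice of $\underline \lambda \in \AA_\kk^{du}$ the specialisation $J = \pi_{\underline \lambda}(J_{d,\underline z})$ is a homogeneous ideal generated in degree $\delta$. Applying \autoref{coreGLMR} in the form given by \autoref{onlym}, with the choice $f = 0$ afforded by \autoref{reduc}, produces a positive integer $n$ and general reductions $K_1, \ldots, K_n$ of $I$ such that
\[
\core(I) = K_1 \cap \cdots \cap K_n.
\]
Since $f = 0$, we have $K_i = J_i$ for each $i$, giving the desired formula.

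For the last assertion, I would argue by containment in both directions. By definition $\core(I)\subseteq \gradedcore(I)$, since every homogeneous reduction of $I$ is in particular a reduction of $I$. Conversely, each $J_i$ obtained above is homogeneous (being generated by degree-$\delta$ forms), and is a reduction of $I$ by \autoref{reduc}. Hence each $J_i$ is a homogeneous reduction of $I$, so $\gradedcore(I)\subseteq J_i$ for every $i$, and therefore
\[
\gradedcore(I) \subseteq J_1 \cap \cdots \cap J_n = \core(I),
\]
which yields equality. The main conceptual point — and essentially the only thing one has to check beyond quoting the previous results — is that the general reductions can be taken homogeneous when $I$ is generated in a single degree; there is no real obstacle once one takes $f = 0$ in \autoref{reduc}.
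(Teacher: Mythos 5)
Your proposal is correct and follows essentially the same route as the paper: take $f=0$ in \autoref{reduc} so that the general reductions $K_i$ of \autoref{coreGLMR} reduce to the homogeneous ideals $J_i$, and invoke \autoref{coreGLMR} together with \autoref{onlym}. The only difference is that you spell out the double containment argument for $\core(I)=\gradedcore(I)$, which the paper leaves implicit.
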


\begin{proof}
We chose  $f=0$ as in \autoref{reduc}, so that $J_i=K_i$. Now the assertion  By  \autoref{coreGLMR} and \autoref{onlym}. 
\end{proof}

\begin{remark}\label{rem_analy_d_forced}
The proof of \autoref{coreGLMR} shows that  under the assumptions of \autoref{remCo} either $\ell(I)=d$ or $\core(I)=I$. 
\end{remark}

\begin{question}
Is it possible to replace the assumptions in \autoref{remCo} that $I$ satisfies $G_d$ and $I_\fm$ is weakly $(d-2)$-residually $S_{2}$ by the hypotheses that $I$ satisfies $G_\ell$ and $I_\fm$ is weakly $(\ell-1)$-residually $S_{2}$ for  $\ell=\ell(I)$, to say that   
$$\core(I)=J_1\cap\cdots \cap J_n\,,$$ 
where $J_1,\ldots, J_n$  are generated by $\ell$ general elements of $I$ with respect to a generating set of $I$ contained in  $I_{\delta}$?
\end{question}

\section{The core and the mono}

In this section we generalize the main result of \cite{PUV} to monomial ideals of higher dimension. We show that under suitable  residual conditions, 
the core of a monomial ideal $I$ coincides with the largest monomial ideal in a general reduction of $I$, provided $I$ is of maximal analytic spread. 

\begin{proposition}\label{GB}
Let $\,\kk$ be an infinite field,  $\undl{x}=x_1,\ldots,x_r$, $\undl{y}=y_1,\ldots,y_s$, and $\undl{z}=z_1,\ldots,z_t$ be three sets of variables. Let $H$ be an ideal of $\,\kk[\underline{x},\underline{y},\underline{z}]$. For every $\underline{\lambda}=(\lambda_{i})\in \AA^{t}_\kk$ let $\pi_{\underline{\lambda}}: \kk[\underline{x},\underline{y},\underline{z}] \rightarrow \kk[\underline{x},\underline{y}] $ denote the evaluation map given by $z_i\mapsto \lambda_i$. Then for general $\undl{\lambda}$ we have $\pi_{\underline{\lambda}}(H\cap \kk[\underline{x},\underline{z}])=\pi_{\underline{\lambda}}(H)\cap \kk[\underline{x}]$. 
\end{proposition}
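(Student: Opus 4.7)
The inclusion $\pi_{\undl{\lambda}}(H\cap \kk[\undl{x}, \undl{z}])\subseteq \pi_{\undl{\lambda}}(H)\cap \kk[\undl{x}]$ is immediate for every $\undl{\lambda}$, since $\pi_{\undl{\lambda}}$ sends $\kk[\undl{x}, \undl{z}]$ into $\kk[\undl{x}]$. For the reverse inclusion at general $\undl{\lambda}$, my plan is to invoke Gr\"obner-basis elimination together with the stability of Gr\"obner bases under generic specialization of parameters.

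First I would equip $\kk[\undl{x}, \undl{y}, \undl{z}]$ with a product monomial order $>$ that compares two monomials first by their $(\undl{x}, \undl{y})$-part using an elimination order $>_1$ for $\undl{y}$ on $\kk[\undl{x}, \undl{y}]$ (so any monomial involving a $\undl{y}$-variable exceeds every monomial of $\kk[\undl{x}]$), and breaks ties with an arbitrary monomial order on $\kk[\undl{z}]$. Let $G=\{g_1,\ldots,g_k\}$ be a Gr\"obner basis of $H$ with respect to $>$, and for each $g_i$ let $m_i$ denote its leading $(\undl{x}, \undl{y})$-monomial and $p_i(\undl{z})\in \kk[\undl{z}]$ the (nonzero) coefficient of $m_i$ in $g_i$. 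Let $U\subseteq \AA^t_{\kk}$ be the nonempty Zariski open set on which $\prod_i p_i$ does not vanish.

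The core step is to show that for $\undl{\lambda}\in U$, the set $\pi_{\undl{\lambda}}(G)$ is a Gr\"obner basis of $\pi_{\undl{\lambda}}(H)$ with respect to $>_1$ on $\kk[\undl{x}, \undl{y}]$. One verifies first, working over $\kk(\undl{z})$, that $G$ is a Gr\"obner basis of $H\cdot \kk(\undl{z})[\undl{x}, \undl{y}]$ with respect to $>_1$: the $(\undl{x},\undl{y})$-leading monomials are the $m_i$, and the converse inclusion of initial ideals follows by comparing product-order leading terms of an arbitrary element of $H$ with those of the $g_i$. Clearing denominators in the resulting Buchberger-type reduction identities $S_{>_1}(g_i,g_j)=\sum_k h_{ijk}g_k$ and shrinking $U$ to avoid the finitely many $\kk[\undl{z}]$-denominators that appear, one specializes these identities through $\pi_{\undl{\lambda}}$ and obtains valid reductions of $S_{>_1}(\pi_{\undl{\lambda}}(g_i),\pi_{\undl{\lambda}}(g_j))$ in $\kk[\undl{x}, \undl{y}]$ modulo $\pi_{\undl{\lambda}}(G)$.

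With this in hand, the standard elimination theorem applied to $\pi_{\undl{\lambda}}(G)$ in $\kk[\undl{x}, \undl{y}]$ yields that $\pi_{\undl{\lambda}}(G)\cap \kk[\undl{x}]$ generates $\pi_{\undl{\lambda}}(H)\cap \kk[\undl{x}]$. For $\undl{\lambda}\in U$ this set equals $\pi_{\undl{\lambda}}(G\cap \kk[\undl{x}, \undl{z}])$: if $g_i\in \kk[\undl{x}, \undl{z}]$ (equivalently $m_i\in \kk[\undl{x}]$) then $\pi_{\undl{\lambda}}(g_i)\in \kk[\undl{x}]$ automatically, while if $m_i$ involves some $\undl{y}$-variable then the surviving term $p_i(\undl{\lambda})\,m_i$ forces $\pi_{\undl{\lambda}}(g_i)\notin \kk[\undl{x}]$. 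Finally, $G\cap \kk[\undl{x}, \undl{z}]$ generates $H\cap \kk[\undl{x}, \undl{z}]$ by the elimination theorem applied to $>$ in $\kk[\undl{x}, \undl{y}, \undl{z}]$, so its image generates $\pi_{\undl{\lambda}}(H\cap \kk[\undl{x}, \undl{z}])$, yielding the desired equality. The main obstacle is the technical bookkeeping in the generic-specialization step, which requires identifying precisely the finite collection of leading coefficients whose vanishing would spoil the Buchberger reductions, so that $U$ can be chosen to avoid them.
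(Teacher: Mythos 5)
Your proof is correct and follows essentially the same strategy as the paper's: choose a Gröbner basis $G$ with respect to an elimination order for $\undl{y}$, show $\pi_{\undl{\lambda}}(G)$ remains a Gröbner basis for general $\undl{\lambda}$ by clearing denominators in the Buchberger reduction identities, and then apply elimination. The one place you are slightly more explicit than the paper is in constructing $G$ as a Gröbner basis of $H$ over $\kk$ with a product order (so that $G\subset H$, $G$ generates $H$, and $G$ is automatically a Gröbner basis of $H\cdot\kk(\undl{z})[\undl{x},\undl{y}]$) and in justifying $\pi_{\undl{\lambda}}(G)\cap\kk[\undl{x}]=\pi_{\undl{\lambda}}(G\cap\kk[\undl{x},\undl{z}])$, both of which the paper leaves implicit.
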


\begin{proof} It is straightforward to see that $\pi_{\underline{\lambda}}(H\cap \kk[\underline{x},\underline{y}])\subseteq \pi_{\underline{\lambda}}(H)\cap \kk[\underline{x}]$.

To prove the reverse inclusion,  we consider the lexicographic monomial  order $<$ on the two polynomial rings $\kk[\undl{z}][\undl{x},\undl{y}]$ and $A:=\kk(\undl{z})[\undl{x}, \undl{y}]$  in the variables $\undl{x}, \undl{y}$ with $x_i<y_j$.  
Let  $G=\{g_1,\ldots, g_m\}\subset  H$ be 
a Gr\"obner basis of $HA$ with respect to $<$.  

Clearly $\pi_{\underline{\lambda}}(G)$ is a generating set of $\pi_{\underline{\lambda}}(H)$ for general $\undl{\lambda}$. We claim that for general $\undl{\lambda}$ the set $\pi_{\underline{\lambda}}(G)$ is also a Gr\"obner basis. 
By Buchberger's criterion it suffices to show that for every $i\neq j$ the $S$-pair $S_{ij}:=S(\pi_{\underline{\lambda}}(g_i), \pi_{\underline{\lambda}}(g_j))$
 is equal to an expression 
 \begin{equation}\label{DivAlg}
 h_1\pi_{\underline{\lambda}}(g_1) + \cdots + h_m\pi_{\underline{\lambda}}(g_m),
 \end{equation}
  where $h_k\in \kk[\undl{x}, \undl{y}]$ and the initial monomials satisfy $\ini{h_k\pi_{\underline{\lambda}}(g_k)} \ls \ini{S_{ij}}$ for every $1\ls k\ls m$. Since $G$ is a Gr\"obner basis of $HA$, there is an expression 
  \begin{equation}\label{DivAlg2}
  S(g_i,g_j)=\widetilde{h}_1g_1 + \cdots + \widetilde{h}_mg_m,
  \end{equation} 
 where $\widetilde{h_k}\in A$ and $\ini{\widetilde{h}_kg_k} \ls \ini{S(g_i,g_j)} $ for every $1\ls k\ls m$.

If $c_{g_k}\in \kk[\undl{z}]$ is the coefficient of $\ini{g_k}$, then $\pi_{\underline{\lambda}}(c_{g_k})$  
is the coefficient of $\ini{\pi_{\underline{\lambda}}(g_k)}$ for general $\undl{\lambda}$.   
Hence 
$\ini{\pi_{\underline{\lambda}}(g_k)}=\ini{g_k}$ and $S_{ij}=\pi_{\underline{\lambda}}(S(g_i, g_j))$ since   $S(g_i, g_j)\in \kk[\undl{x}, \undl{y}, \undl{z}]$. 
Therefore, after clearing denominators in \autoref{DivAlg2} and applying $\pi_{\undl{\lambda}}$ for general $\undl{\lambda}$,  the desired expression for $S_{ij}$ as in \autoref{DivAlg} follows.

Since $\pi_{\underline{\lambda}}(G)$ is a Gr\"obner basis of $\pi_{\underline{\lambda}}(H)$ for general $\undl{\lambda}$ and $<$ is an elimination order, it follows that 
$\pi_{\underline{\lambda}}(H)\cap \kk[\undl{x}]$ is generated by $\pi_{\underline{\lambda}}(G)\cap \kk[\undl{x}]$.  
Finally, for general $\undl{\lambda}$ 
we have $$ \pi_{\underline{\lambda}}(G)\cap \kk[\undl{x}]=\pi_{\underline{\lambda}}\left(G\cap \kk[\underline{x}, \underline{z}]\right)\subseteq \pi_{\underline{\lambda}}\left(H\cap \kk[\underline{x}, \underline{z}]\right),$$ and the conclusion  follows.
\end{proof}

The goal in this section is to show that under suitable assumptions on a monomial ideal $I$, the core of $I$ can be obtained as the mono of a general reduction of $I$, namely $\core(I)=\mono(K)$, where $K$ is a general reduction of $I$ as in \autoref{r general red}  and $\mono(K)$ denotes the largest monomial ideal contained in $K$. In order to compute $\mono(K)$ we follow an algorithm due to Saito, Sturmfels, and Takayama \cite[Algorithm~4.4.2]{SST}.

For the proof of our main result we need a notion of mono of an ideal in a polynomial ring over an arbitrary Noetherian ring.  Let $A$ be a Noetherian ring. 
For an ideal $L$ in the polynomial ring $A[x_1, \ldots, x_d]$,  the {\it{multihomogenization}} of $L$, denoted by $\widetilde{L}$, is  the ideal of the  polynomial ring $A[x_1, \ldots, x_d, y_1, \ldots, y_d]$ generated by
 $$\left\{\widetilde{g}= g\left(\frac{x_1}{y_1},\ldots, \frac{x_d}{y_d}\right)y_1^{\deg_{x_1}(g)}\cdots y_d^{\deg_{x_d}(g)} \Big| \ g\in L\right\}.$$
 We consider  $A[x_1, \ldots, x_d, y_1, \ldots, y_d]$ with the $\NN^d$-grading induced by $\deg(x_i)=\deg(y_i)=\mathbf{e}_i$. 
 We note that $\widetilde{g}$ is indeed multihomogeneous with ${\rm deg}(\widetilde{g})=\left(\deg_{x_1}(g),\ldots, \deg_{x_d}(g)\right)\in \NN^d$.
 
 The next example illustrates the process of multihomogenization of an element.

\begin{example}  Let $g = c_1x_1^2x_2 + c_2x_1x_3^2+c_3x_2^3x_3 \in A[x_1, x_2, x_3]$ for some $c_1,c_2,c_3\in A$.  Then $\widetilde{g}=c_1x_1^2x_2y_2^2y_3^2 + c_2x_1x_3^2y_1y_2^3+c_3x_2^3x_3y_1^2y_3.$
\end{example}

To obtain the multihomogenization of an ideal $L \subset A[x_1, \ldots, x_d]$ it is enough to multihomogenize a given generating set $g_1, \ldots, g_u$ of $L$ and to saturate with respect to $Y=\prod \limits_{j=1}^{d}y_j$, that is 
\begin{equation}\label{Ltilde}
\widetilde{L}=(\widetilde{g_1}, \ldots, \widetilde{g_u}):Y^{\infty}.
\end{equation}

\begin{definition}\label{generalmono}
Let $A$ be a Noetherian ring  and let $L$ be an ideal in the polynomial ring $A[x_1,\ldots, x_d]$. We define  $\mono(L)$ to be the ideal generated by the elements in $L$ of the form $a\, m$, where $a\in A$   and $m$ is a monomial.
\end{definition}

Following  \cite[Algorithm 4.4.2]{SST}, we obtain 
\begin{equation}\label{monoeqn}
\mono(L)=\widetilde{L}\, \cap \, A[x_1,\ldots, x_d].
\end{equation}

\begin{proposition}\label{indep}
Let $A=\kk[z_1, \ldots, z_t]$ be a polynomial ring over an infinite field $\kk$ and $L$ a proper ideal in the polynomial ring  $A[x_1,\ldots, x_d]$.  For $\underline{\lambda}\in \AA^{t}_\kk$  let
$\pi_{\underline{\lambda}}:A[x_1,\ldots, x_d]\rightarrow  \kk[x_1,\ldots, x_d]$
be  the evaluation map given by $z_i\mapsto \lambda_i$. For general $\undl{\ll}\in \AA^{t}_\kk$ we have the following 
\begin{enumerate}[$($a$)$]
\item   $\widetilde{\pi_{\undl{\ll}}(L)}=\pi_{\undl{\ll}}(\widetilde{L})$
\item   $\mono(\pi_{\undl{\ll}}(L))=\pi_{\undl{\ll}}(\mono(L))$
\item  $\mono(\pi_{\undl{\ll}}(L))$ does not depend on $\undl{\ll}\,$.
\end{enumerate}
\end{proposition}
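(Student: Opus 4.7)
The three parts are nested: (c) follows from (b), (b) follows from (a) together with Proposition \ref{GB}, and (a) is the main work. I will establish them in that order but describe the plan in the order (a), (b), (c).

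For part (a), fix a generating set $g_1,\ldots,g_u$ of $L$, and set $H=(\widetilde{g_1},\ldots,\widetilde{g_u})\subset A[\undl{x},\undl{y}]=\kk[\undl{z},\undl{x},\undl{y}]$, so that $\widetilde L=H:Y^\infty$ by equation \eqref{Ltilde}. The first observation is that for general $\undl\ll$, none of the finitely many nonzero coefficients in $A$ of the polynomials $g_i$ (viewed as polynomials in $\undl{x}$) vanishes at $\undl\ll$; hence $\deg_{x_j}(\pi_{\undl\ll}(g_i))=\deg_{x_j}(g_i)$ for all $i,j$, and consequently $\widetilde{\pi_{\undl\ll}(g_i)}=\pi_{\undl\ll}(\widetilde{g_i})$. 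So $\pi_{\undl\ll}(H)$ is exactly the ideal used to compute $\widetilde{\pi_{\undl\ll}(L)}$. The remaining issue is to interchange $\pi_{\undl\ll}$ with the saturation by $Y$. I will handle this by the standard trick of turning saturation into elimination: introduce a new variable $t$ and set $H':=H+(tY-1)\subset \kk[\undl z,\undl x,\undl y,t]$, so that $H:Y^\infty=H'\cap \kk[\undl z,\undl x,\undl y]$ and similarly after specialization. Then I apply Proposition \ref{GB} to the ideal $H'$ with $(\undl x,\undl y)$ in the role of its $\undl x$, with $t$ in the role of its $\undl y$, and with $\undl z$ in the role of its $\undl z$. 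This yields
\[
\pi_{\undl\ll}\bigl(H'\cap \kk[\undl x,\undl y,\undl z]\bigr)=\pi_{\undl\ll}(H')\cap \kk[\undl x,\undl y]
\]
for general $\undl\ll$. The left-hand side is $\pi_{\undl\ll}(H:Y^\infty)=\pi_{\undl\ll}(\widetilde L)$, and the right-hand side is $\pi_{\undl\ll}(H):Y^\infty=\widetilde{\pi_{\undl\ll}(L)}$ by the observation above.

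For part (b), I use the characterization $\mono(\cdot)=\widetilde{(\cdot)}\,\cap\,(\text{polynomial ring without }\undl y)$ from equation \eqref{monoeqn} together with (a) and Proposition \ref{GB}:
\[
\mono(\pi_{\undl\ll}(L))=\widetilde{\pi_{\undl\ll}(L)}\cap \kk[\undl x]\stackrel{(a)}{=}\pi_{\undl\ll}(\widetilde L)\cap \kk[\undl x]\stackrel{\text{Prop.\ \ref{GB}}}{=}\pi_{\undl\ll}\bigl(\widetilde L\cap A[\undl x]\bigr)=\pi_{\undl\ll}(\mono(L)),
\]
where Proposition \ref{GB} is applied to $\widetilde L\subset \kk[\undl x,\undl y,\undl z]$ in the proposition's own notation.

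For part (c), I pick a finite generating set of the $A$-ideal $\mono(L)\subset A[\undl x]$ of the form $a_1m_1,\ldots,a_sm_s$ with nonzero $a_i\in A$ and monomials $m_i\in \kk[\undl x]$ (possible by Definition \ref{generalmono}, after collecting terms). For general $\undl\ll$ we have $\pi_{\undl\ll}(a_i)\ne 0$ for all $i$, so $\pi_{\undl\ll}(\mono(L))=(m_1,\ldots,m_s)\kk[\undl x]$, which is manifestly independent of $\undl\ll$. Combining this with (b) gives the conclusion. The main obstacle is part (a), where one must commute $\pi_{\undl\ll}$ with a saturation; the trick of rewriting saturation as elimination via a new variable $t$ reduces this precisely to the Gröbner-basis statement of Proposition \ref{GB}.
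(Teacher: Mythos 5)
Your proof is correct. Parts (b) and (c) coincide with the paper's argument, but your treatment of part (a) is genuinely different. The paper reduces to showing that $\pi_{\undl\ll}(\widetilde L)$ is saturated with respect to $Y$, i.e., that $Y$ is a non-zerodivisor on $\kk[\undl x,\undl y]/\pi_{\undl\ll}(\widetilde L)$; it proves this by passing to $T=\kk[\undl z]_{(\undl z-\undl\ll)}[\undl x,\undl y]/\widetilde L$, invoking generic freeness to conclude that $\undl z-\undl\ll$ is $T/(Y)$-regular for general $\undl\ll$, and then permuting the regular sequence $Y,\undl z-\undl\ll$ using homogeneity. You instead convert the saturation $H:Y^\infty$ into an elimination ideal via the Rabinowitsch device $H':=H+(tY-1)$, so that $H:Y^\infty=H'\cap\kk[\undl z,\undl x,\undl y]$, and then invoke Proposition~\ref{GB} a second time (with $(\undl x,\undl y)$ in the retained role and $t$ in the eliminated role) to commute $\pi_{\undl\ll}$ with the elimination. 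The remaining bookkeeping — $\widetilde{\pi_{\undl\ll}(g_i)}=\pi_{\undl\ll}(\widetilde{g_i})$ for general $\undl\ll$, hence $\pi_{\undl\ll}(H):Y^\infty=\widetilde{\pi_{\undl\ll}(L)}$ — is the same observation the paper starts with. Your route is more uniform in that the whole of Proposition~\ref{indep} then rests on the single Gr\"obner-theoretic fact of Proposition~\ref{GB} (applied twice in (a), once more in (b)), avoiding the generic-freeness and regular-sequence-permutation machinery. Both proofs are correct, and the sets of "general" $\undl\ll$ are each finite intersections of dense opens, so there is no issue combining them.
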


\begin{proof} 
We begin with the proof of (a). We first notice that for any $g\in A[x_1,\ldots, x_d]$ and general $\undl{\ll}$ we have $\widetilde{\pi_{\undl{\ll}}(g)}=\pi_{\undl{\ll}}(\widetilde{g})$. Write $L=(g_1,\ldots,g_u)$, then 
$$\left(\widetilde{\pi_{\undl{\ll}}(g_1)},\ldots,\widetilde{\pi_{\undl{\ll}}(g_u)}\right)=
\left(\pi_{\undl{\ll}}(\widetilde{g_1}),\ldots, \pi_{\undl{\ll}}(\widetilde{g_u})\right)
=
\pi_{\undl{\ll}}(\widetilde{g_1},\ldots, \widetilde{g_u}).
$$
Therefore, $$\widetilde{\pi_{\undl{\ll}}(L)}=\left(\widetilde{\pi_{\undl{\ll}}(g_1)},\ldots,\widetilde{\pi_{\undl{\ll}}(g_u)}\right):Y^\infty
= \pi_{\undl{\ll}}(\widetilde{g_1},\ldots, \widetilde{g_u}):Y^\infty.$$
On the other hand,
$$\pi_{\undl{\ll}}(\widetilde{L})=
 \pi_{\undl{\ll}}((\widetilde{g_1},\ldots, \widetilde{g_u}):Y^\infty).$$
Notice that
 $$\pi_{\undl{\ll}}(\widetilde{g_1},\ldots, \widetilde{g_u})\subseteq 
 \pi_{\undl{\ll}}((\widetilde{g_1},\ldots, \widetilde{g_u}):Y^\infty)
 \subseteq
 \pi_{\undl{\ll}}(\widetilde{g_1},\ldots, \widetilde{g_u}):Y^\infty.
 $$
 Thus to prove that $\widetilde{\pi_{\undl{\ll}}(L)}=\pi_{\undl{\ll}}(\widetilde{L})$ it suffices to show that  
$ \pi_{\undl{\ll}}((\widetilde{g_1},\ldots, \widetilde{g_u}):Y^\infty)$ is saturated with respect to $Y$, for this it suffices to show that $Y$ is a non-zerodivisor on $\kk[\undl{x},\undl{y}]/\pi_{\undl{\ll}}((\widetilde{g_1},\ldots, \widetilde{g_u}):Y^\infty)=\kk[\undl{x},\undl{y}]/\pi_{\undl{\ll}}(\widetilde{L}).$ 
The image of $Y$  is not a unit in $\kk[\undl{x},\undl{y}]/\pi_{\undl{\ll}}(\widetilde{L})$, hence 
$
 \kk[\undl{x},\undl{y}]/(\pi_{\undl{\ll}}(\widetilde{L}),Y) \neq 0.$

Set $$T= \frac{ \kk[\undl{z}]_{(\undl{z}-\undl{\ll})}[\undl{x},\undl{y}]  }{ \widetilde{L}}$$ 
 and notice that $T/(\undl{z}-\undl{\ll})=\kk[\undl{x},\undl{y}]/\pi_{\undl{\ll}}(\widetilde{L}).$
  By generic freeness   \cite[Theorem 14.4]{E}, for general $\undl{\ll}$ the map $\kk[\undl{z}]_{(\undl{z}-\undl{\ll})}\rightarrow T/(Y)$ is flat and hence the elements $\undl{z}-\undl{\ll}$ form a regular sequence on $T/(Y)$. For this also recall that 
 $$T/(Y, \undl{z}-\undl{\ll})=
 \kk[\undl{x},\undl{y}]/(Y,\pi_{\undl{\ll}}(\widetilde{L})) \neq 0\,.$$
 Since $Y$ is a non-zerodivisor on $T$ it follows that $Y, \undl{z}-\undl{\ll}$ is a $T$-regular sequence. As this sequence consists of homogeneous elements in $T$, and $T$ is a positively graded ring over a local ring, we obtain that $\undl{z}-\undl{\ll}, Y$ is also a regular sequence \cite[Theorem 16.2 and Theorem 16.3]{Mat}. We conclude that $Y$ is a non-zerodivisor on $T/(\undl{z}-\undl{\ll})=\kk[\undl{x},\undl{y}]/\pi_{\undl{\ll}}(\widetilde{L}).$

Part $(b)$ is a direct consequence of $(a)$ and \autoref{GB}. 

Finally, part $(c)$ follows from $(b)$ because $ \pi_{\undl{\ll}}(\mono(L))$ does not depend on $\undl{\ll}$ for general $\undl{\ll}$. Indeed, if $\{a_i\, m_i\}$ is a finite generating set of $\mono(L)$, where $a_i\in \kk[\undl{z}]$ and $m_i$ are monomials in $x_1,\ldots, x_d$, then for any $\undl{\ll}\in D(\prod_i a_i)$ the ideal $\pi_{\undl{\ll}}\big(\mono(L)\big)$ is independent of $\undl{\ll}$. 
\end{proof}

\begin{corollary}\label{maincor}
Let $\kk$ be an infinite field, $R=\kk[x_1,\ldots, x_d]$ a polynomial ring, and  $I$ a monomial ideal.   For any $n\in \NN$ let  $K$ and $K_1, \ldots, K_n$ be general reductions of $I$   as in \autoref{remark_r}.  We have
 $$\core(I) \subseteq \mono(K) \subseteq K_1 \cap \cdots \cap K_n\,.$$
\end{corollary}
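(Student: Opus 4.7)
The plan is to prove the two inclusions separately, treating them as instances of two quite different ideas.

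For the first inclusion $\core(I)\subseteq \mono(K)$, I would combine the trivial fact that $\core(I)\subseteq K$ (since $K$ is a reduction of $I$ by \autoref{reduc} and \autoref{remark_r}) with the claim that $\core(I)$ is itself a monomial ideal. The monomiality of $\core(I)$ is a standard torus argument: the torus $T=(\kk^{*})^{d}$ acts on $R$ by $x_{i}\mapsto t_{i}x_{i}$ and fixes the monomial ideal $I$, so each $t\in T$ permutes the set of reductions of $I$ via $J\mapsto t\cdot J$ (note $(t\cdot J)(tI)^{n}=t\cdot(JI^{n})$ shows that $t\cdot J$ is again a reduction). Hence $\core(I)$ is $T$-invariant, and since $\kk^{*}$ is infinite this forces $\core(I)$ to be a monomial ideal. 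A monomial ideal contained in $K$ is by definition contained in $\mono(K)$, which gives the first inclusion.

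For the second inclusion $\mono(K)\subseteq K_{1}\cap\cdots\cap K_{n}$, the idea is that all the relevant general reductions arise by specializing one common ``generic'' object. Set $A=\kk[\undl{z}]$ and
$$L \,=\, J_{d,\undl{z}}(\undl{f})+(f^{r}) \;\subset\; A[x_{1},\ldots,x_{d}],$$
so that each general reduction is of the form $\pi_{\undl{\ll}}(L)$ for a general $\undl{\ll}$. By \autoref{indep}(c), the ideal $\mono(\pi_{\undl{\ll}}(L))$ is independent of the general choice of $\undl{\ll}$. Since a finite intersection of dense Zariski open subsets of $\AA^{du}_{\kk}$ is still dense and open, we may pick the parameters defining $K,K_{1},\ldots,K_{n}$ simultaneously general. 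This yields $\mono(K)=\mono(K_{i})$ for every $i$, and combined with the trivial containment $\mono(K_{i})\subseteq K_{i}$ we obtain $\mono(K)\subseteq K_{1}\cap\cdots\cap K_{n}$.

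The only subtle point I expect is precisely this simultaneity of generality, but it reduces to the stability of dense Zariski opens under finite intersection. Once the correct common ideal $L$ is identified so that \autoref{indep} applies uniformly, the rest is essentially bookkeeping; the main conceptual input is the torus-invariance of $\core(I)$ and the fact, recorded in \autoref{indep}, that $\mono$ commutes with generic specialization.
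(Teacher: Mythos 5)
Your proof is correct and takes essentially the same route as the paper's: the first inclusion is obtained by combining $\core(I)\subseteq K$ (since $K$ is a reduction) with the monomiality of $\core(I)$, and the second by invoking \autoref{indep}(c) to identify $\mono(K)$ with $\mono(K_i)\subseteq K_i$. The only difference is that you supply the torus-invariance argument for $\core(I)$ being monomial, while the paper delegates this to a citation ([CPU, proof of Remark 5.1]); your observation about intersecting finitely many dense opens to pick all parameters simultaneously general is exactly the implicit bookkeeping behind \autoref{Notation} and \autoref{remark_r}.
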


\begin{proof}
Clearly,  $\core(I)\subseteq \mono(K)$, since $K$ is a reduction of $I$ by \autoref{reduc} and  $\core(I)$ is a monomial ideal by \cite[proof of Remark 5.1]{CPU}. For the second inclusion, notice that $\mono(K)=\mono(K_i)$ for all $1\ls i \ls n$  according to \autoref{indep}(c).
\end{proof}

\begin{remark}\label{GC}
If in  \autoref{maincor}, the ideal $I$ is generated by monomials of degree $\delta$ and the elements $f_1,\ldots, f_u$ of  \autoref{Notation} are chosen to be homogeneous polynomials of degree $\delta$ and $f=0$,  then $K$ is a homogeneous reduction of $I$. Therefore $$\gradedcore(I)\subseteq \mono(K).$$
\end{remark}

We now prove  the main theorem of this section.

\begin{theorem}\label{MainwithGdS2}
Let $\kk$ be an infinite field,  $R=\kk[x_1,\ldots, x_d]$ a polynomial ring,  $\fm:=(x_1,\ldots, x_d)$, and  $I$ a monomial ideal. 
 If  $I$ satisfies $G_d$ and $I_\fm$ is weakly $(d-2)$-residually $S_{2}$, then
 $$\core(I)=\mono(K)$$
 for $K$ a general reduction of $\,I$  with $r=d$ as in \autoref{remark_r}.
\end{theorem}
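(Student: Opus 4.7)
The plan is to combine Corollary \ref{maincor} with Theorem \ref{coreGLMR}, exploiting the key observation already encoded in Proposition \ref{indep}(c): the ideal $\mono(K)$ does not depend on which general reduction $K$ one picks, whereas the core is described by a \emph{finite} intersection of such general reductions.

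First I would dispense with the easy inclusion. Since $R=\kk[x_1,\ldots,x_d]$ is regular of dimension $d$, and $I$ is a monomial (hence homogeneous) ideal of positive height satisfying $G_d$ with $I_\fm$ weakly $(d-2)$-residually $S_2$, we may pick any nonzero $f\in I$ (the only minimal prime of $R$ is $(0)$) and form a general reduction $K=J+(f^d)$ as in Definition \ref{r general red} and Remark \ref{remark_r}. Corollary \ref{maincor} then immediately gives the inclusion $\core(I)\subseteq \mono(K)$.

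For the reverse inclusion, the plan is to invoke Theorem \ref{coreGLMR} together with Remark \ref{onlym}. Because $R$ is regular, Theorem \ref{coreGLMR} provides a positive integer $n$ and general reductions $K_1,\ldots,K_n$ of $I$, each of the form $K_i=J_i+(f^d)$ with $r=d$, such that
\[
\core(I)=K_1\cap\cdots\cap K_n.
\]
Now apply Corollary \ref{maincor} once more, this time to the $n+1$ general reductions $K,K_1,\ldots,K_n$ (all formed with the same choice of $f$ and the same $r=d$). The corollary yields
\[
\mono(K)\subseteq K_1\cap\cdots\cap K_n=\core(I),
\]
which combined with the opposite inclusion finishes the proof.

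The proof is therefore essentially a bookkeeping step once the heavy machinery is in place; the substantive content lives in the results invoked. The subtle point to verify is that the hypotheses of Theorem \ref{coreGLMR} and Remark \ref{onlym} really apply in the present setting with $r=d$ — namely that $R$ is regular and positively graded, that $I$ is homogeneous with positive height, and that the residual conditions localize correctly to each $\fm_i$ (which is handled by Lemma \ref{S2 localizes}) so that the same $n$ general reductions can be used simultaneously in both applications of Corollary \ref{maincor}. The main obstacle, which has been overcome in the preparatory sections, was precisely proving that $\mono(K)$ is intrinsic to $I$ for a general $K$ (Proposition \ref{indep}(c)) and that the core localizes and is a finite intersection of general reductions in the non-local graded setting (Theorem \ref{coreGLMR} and Corollary \ref{corelocalizes}); once those are granted, the final assembly is short.
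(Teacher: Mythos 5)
Your proposal is correct and takes exactly the same route as the paper, whose proof is simply the citation ``follows by \autoref{maincor}, \autoref{coreGLMR}, and \autoref{onlym}''; you have just unpacked that citation into the chain $\core(I)\subseteq\mono(K)\subseteq K_1\cap\cdots\cap K_n=\core(I)$. The points you flag as subtle — that $r=d$ is available because $R$ is regular, that \autoref{onlym} weakens the residual hypothesis to $I_\fm$, and that \autoref{indep}(c) makes $\mono(K)$ independent of the general reduction — are precisely the ingredients the paper relies on.
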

\begin{proof}
The proof follows by \autoref{maincor}, \autoref{coreGLMR},  and \autoref{onlym}. 
\end{proof}

The following  example shows that \autoref{coreGLMR} and \autoref{MainwithGdS2} do not hold without the assumption that $I$ is $G_d$.

\begin{example}\label{ex1}
Let $R=\mathbb{Q}[x_1,x_2,x_3]$ and $I=(x_1^3,x_1^2x_2,x_1x_3^2,x_3^3)$.  The ideal $I$ has height 2 and analytic spread 3. It is  weakly 2-residually $S_2$ because every link of $I$ is unmixed and hence Cohen-Macaulay.  However, $I$ does not satisfy $G_3$. 
Computation with Macaulay2 \cite{GS} shows that there exist non-zero polynomials $h$ and $g$ in  $\mathbb{Q}[\undl{z}]$ such that 
$$\mono(J_{3,\,\undl{z}}) = (h)\left(x_1^2,x_1x_2,x_1x_3,x_2x_3,x_3^2\right)I+ (hg)\left(x_1^2x_2^3, x_1x_2^2x_3^2, x_2^2x_3^3\right)=: (h)\mathfrak{A}+(hg)\mathfrak{B}.$$
For general $\undl{\lambda}$ we have $\mathfrak{A}+\mathfrak{B}=\pi_{\undl{\lambda}}(\mono(J_{3,\,\undl{z}}))=\mono(K)$, where $K:=\pi_{\undl{\lambda}}(J_{3,\,\undl{z}})$ (see \autoref{indep}). 
Therefore $ \core(I)\subseteq \gradedcore(I) \subseteq \mathfrak{A}+\mathfrak{B}= I\,\fm^2$ (see \autoref{GC}).  
The ideal $H=(x_1^3, x_1^2x_2, x_1x_3^2+x_3^3)$ 
 is a minimal reduction of $I$, since $I^3=HI^2$. On the other hand, $\mono(K)=I\,\fm^2 \not\subseteq H$. Hence,  $\gradedcore(I)$ is not equal to $\mono(K)$ for a general reduction $K$ and thus $\core(I)$ is not a finite intersection of general reductions of $I$ (see \autoref{maincor}). 
 In particular, neither \autoref{coreGLMR}  nor \autoref{MainwithGdS2} hold. 
\end{example}

The ideal in the next example  is $G_d$ (in fact $G_\infty$), but $\ell(I)<d$ and  $I_\fm$ is not  weakly $(d-2)$-residually $S_{2}$ (see \autoref{rem_analy_d_forced} and \autoref{remCo}). Again,  
$\core(I)$ is not a finite intersection of general minimal reductions of $I$ and it is not the mono of a general minimal reduction of $I$.

\begin{example}\label{ex2}
Let $$R = \QQ[x_1,x_2,x_3,x_4,x_5,x_6]\quad\text{ and }\quad I= (x_1x_2,x_2x_3,x_3x_4,x_4x_1,x_4x_5,x_5x_6).$$  
One easily verifies  that the height of $I$ is $3$ and that it satisfies $G_\infty$. 
However, $\ell(I)=5$ and $I_\fm$ is not weakly 3-residually $S_2$. 
In \cite[Example 4.8]{FM} it is shown that $\core(I) \neq \fm\, I$. Using Macaulay2~\cite{GS} one verifies that $\mono(J)=\fm\, I$, for $J$ a general minimal reduction of $I$, i.e., an ideal generated by 5 general elements of $I$ with respect to the six monomial generators of $I$. 
Therefore  
$\core(I)$   is not equal to $\mono(J)$. 
\end{example}

\smallskip

\section{ The core of monomial ideals generated in one degree}\label{mono section}

There is no known method to compute the core  of a given  ideal if  the residual conditions required in the previous sections do not hold.  
Our goal in this section is to propose an approach to compute the core of monomial ideals generated in a single degree without any further assumptions.

In the previous section we established that for a monomial ideal $I$,   
$\core(I)$ and  $\gradedcore(I)$ are contained in $\mono(K)$ for a general reduction $K$ of  $I$. This containment  holds in general and it is an equality under appropriate residual conditions (see \autoref{maincor}, \autoref{GC}, \autoref{MainwithGdS2}).  
 For a monomial ideal $I$ generated in a single degree we construct an ideal that is contained in $\gradedcore(I)$ 
 and we conjecture that equality holds in general (see \autoref{mainOneDegree} and \autoref{conjectureColon}). 
 We verify the  conjecture for a specific ideal  in \autoref{ex3}. 
Furthermore, under the same residual conditions $\core(I)$ can be obtained as the intersection of finitely many general reductions \autoref{remCo}. However, as seen in \autoref{ex1}, {\it special} reductions are needed in the absence of the residual conditions. In this section we provide a method to find these special reductions.

\begin{notation}\label{Notation3}
Let $\kk$ be an infinite field,  $R=\kk[x_1,\ldots, x_d]$  a polynomial ring, and $\fm=(x_1,\ldots, x_d)$  its homogeneous maximal ideal. Let $I$ be a non-zero ideal generated by homogeneous elements  of the same degree $\delta$.  Let  $\F := \F(I)$ 
be the  special fiber ring of $I$ and $\F_+$ the ideal generated by the elements of  $\F$ of positive degree.  Notice that $\F\cong \kk[I_\delta]\subseteq R$ because $I$ is generated in a single degree. Let   $\ell:=\ell(I)=\dim(\F)$ be the  analytic spread of $I$. 

Fix a generating sequence $\undl{f}=f_1,\ldots, f_u$ of $I$ contained in $I_\delta$. 
 Consider $\ell u$ variables $\underline{z}=\{z_{ij}\mid 1\ls i\ls \ell \text{ and  }1\ls j\ls u\}$.  
 Write $b_i = \sum_{j=1}^u z_{i,j}f_j $. Let $\H\subseteq J:=J_{\ell,\undl{z}}(\undl{f})$ 
 be the ideals generated by $b_1,\ldots, b_\ell$ in the rings $\F[\undl{z}]\subseteq R[\undl{z}]$, respectively. For $\undl{\ll}\in \AA_\kk^{\ell u}$, we write $\H_{\undl{\ll}}=\pi_{\undl{\ll}}(\H)\subseteq \F$ and $J_{\undl{\ll}}=\pi_{\undl{\ll}}(J)\subseteq R$, where $\pi_{\undl{\ll}}$ denotes the evaluation map.

Notice that $\F_+ R =I$, $\H R[\undl{z}]=J$, and $\H_{\undl{\ll}}R=J_{\undl{\ll}}$.  
Moreover, $J_{\undl{\ll}}$ is a reduction of $I$ if and only if $I^{r+1}=J_{\undl{\ll}}I^r$ for some $r\gs 0$  if and only if $\F_{+}^{r+1}=\H_{\undl{\ll}}\F_{+}^r$ for some $r\gs 0$ if and only if $\F_{+} \subseteq \sqrt{\H_{\undl{\ll}}}$. 
\end{notation}

The following result describes the locus of the points $\undl{\lambda}$ for which  $J_{\undl{\lambda}}$ is not a reduction of $I$.  We  also show that this locus is determined by a single irreducible polynomial of $\kk[\undl{z}]$. We note that here we only assume $I$ is  homogeneous and not necessarily generated by monomials.

\begin{proposition}\label{nonRedLocus} With assumptions as in \autoref{Notation3} let $\A=(\H:_{\F[\undl{z}]} \F_+^\infty)\cap \kk[\undl{z}]$. 
\begin{enumerate} [$($a$)$]
\item The  $\kk[\undl{z}]$-ideal $\A$ defines the locus where $J_{\undl{\lambda}}$ is not a reduction of $I$.
\item The ideal $\A$ is a prime ideal of height one. Thus $\A=(h)$, where $h$ an irreducible polynomial in $\kk[\undl{z}]$.
\end{enumerate}
\end{proposition}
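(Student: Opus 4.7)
My plan is to prove (a) by identifying $V(\A)$ with the image of a proper morphism, and (b) by combining a contraction-of-prime argument with a transcendence-degree calculation after localization.

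For the easy inclusion in (a), if $\undl{\lambda}_0 \notin V(\A)$ I pick $p \in \A$ with $p(\undl{\lambda}_0) \neq 0$; since $p\cdot \F_+^N \subseteq \H$ for some $N$, evaluating at $\undl{\lambda}_0$ yields $p(\undl{\lambda}_0)\F_+^N \subseteq \H_{\undl{\lambda}_0}$, and $p(\undl{\lambda}_0)\in\kk^{\times}$ forces $\F_+^N \subseteq \H_{\undl{\lambda}_0}$, so $J_{\undl{\lambda}_0}$ is a reduction by the equivalences in \autoref{Notation3}. For the converse, I would regard $\F[\undl{z}]=\F\otimes_\kk \kk[\undl{z}]$ as an $\F$-graded $\kk[\undl{z}]$-algebra and form the proper morphism
\[
\pi\colon \mathcal{Y}:=V_+(\H)\hookrightarrow \Proj(\F)\times_\kk \Spec(\kk[\undl{z}])\longrightarrow \Spec(\kk[\undl{z}]).
\]
The fiber of $\pi$ over a closed point $\undl{\lambda}_0$ is $V_+(\H_{\undl{\lambda}_0})\subseteq\Proj(\F)$, which is empty exactly when $\F_+\subseteq\sqrt{\H_{\undl{\lambda}_0}}$, i.e., exactly when $J_{\undl{\lambda}_0}$ is a reduction of $I$. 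On the other hand, the kernel of $\kk[\undl{z}]\to H^0(\mathcal{Y},\O_{\mathcal{Y}})$ equals $\kk[\undl{z}]\cap (\H:_{\F[\undl{z}]}\F_+^\infty)=\A$, so the scheme-theoretic image of $\pi$ is $V(\A)$; since $\pi$ is proper, its set-theoretic image coincides with $V(\A)$, proving (a).

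For (b), primeness of $\A$ is immediate from primeness of $\P:=\H:_{\F[\undl{z}]}\F_+^\infty$ by contraction, so my first step is to run the proof of \autoref{prime} verbatim---with $\F$ in place of $R$ and $\F_+$ in place of $I$---to conclude that $\P$ is prime of height $\ell$ in $\F[\undl{z}]$. It then suffices to prove $\trdeg_\kk(\kk[\undl{z}]/\A)=\ell u-1$. After a generic change of basis among $f_1,\ldots,f_u$ (via \autoref{nonzerodivisor}), one may assume $f_1$ is a nonzerodivisor modulo $\P$, and the argument in the proof of \autoref{prime} yields
\[
(\F[\undl{z}]/\P)_{f_1}\;\cong\;\F[\undl{z}']_{f_1},\qquad \undl{z}':=\{z_{ij}:2\leq j\leq u\},
\]
via $z_{i,1}\mapsto -g_i/f_1$ with $g_i:=\sum_{j\geq 2}z_{ij}f_j$. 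Both inclusions $\kk[\undl{z}]/\A\hookrightarrow \F[\undl{z}]/\P\hookrightarrow \F[\undl{z}']_{f_1}$ are injective, and the image of $\kk[\undl{z}]/\A$ in $\F[\undl{z}']_{f_1}$ is the subring $\kk[\undl{z}',g_1/f_1,\ldots,g_\ell/f_1]$.

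The main technical point is to show $\trdeg_{\kk(\undl{z}')}\kk(\undl{z}')(g_1/f_1,\ldots,g_\ell/f_1)=\ell-1$. Since $A:=[\F_{f_1}]_0=\kk[f_2/f_1,\ldots,f_u/f_1]$ is a $\kk$-domain of dimension $\ell-1$, the $g_i/f_1$ lie in $A\otimes_\kk \kk(\undl{z}')$, whose transcendence degree over $\kk(\undl{z}')$ is $\ell-1$; this gives the upper bound. For the lower bound, I would argue that the $g_i/f_1$ are generic $\kk(\undl{z}')$-linear combinations of the $\kk$-algebra generators of $A$, so that $g_1/f_1,\ldots,g_{\ell-1}/f_1$ form a system of parameters of $A\otimes_\kk \kk(\undl{z}')$---hence are algebraically independent over $\kk(\undl{z}')$---while $g_\ell/f_1$ must then be algebraic over them. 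Combining gives $\trdeg_\kk(\kk[\undl{z}]/\A)=\ell(u-1)+(\ell-1)=\ell u-1$ and hence $\height(\A)=1$. The principal obstacle I anticipate is making the ``generic Noether normalization'' step precise: the cleanest route is to observe that the locus in $\AA^{(\ell-1)(u-1)}_\kk$ of coefficient systems for which the chosen linear combinations fail to be a system of parameters of $A$ is a proper Zariski-closed subset (by Noether normalization of $A$ over the infinite field $\kk$), so it is avoided at the generic point corresponding to the indeterminates $z_{ij}$.
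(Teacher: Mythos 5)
Your proof is correct, and both parts take a genuinely different route from the paper's.

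For part (a), the paper works entirely with affine data: it considers the inclusion of domains $\kk[\undl{z}]/\A \hookrightarrow \F[\undl{z}]/(\H:\F_+^\infty)$ and invokes upper semicontinuity of fiber dimension (\cite[Theorem~14.8(b)]{E}) together with the fact that the generic fiber has dimension $\height(\F_+V)\gs 1$ to conclude every fiber is nonempty, hence $V(\A)=\Im(\phi)$. You instead use properness of the relative Proj $\mathcal Y=\Proj(\F[\undl{z}]/\H)\to\Spec(\kk[\undl{z}])$ (i.e.\ the fundamental theorem of elimination theory): the image is closed and equals the support of the scheme-theoretic image, and you identify the kernel of $\kk[\undl{z}]\to H^0(\mathcal Y,\O_{\mathcal Y})$ with $\A$ by an easy computation on the standard affine cover $D_+(\bar f_i)$. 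The two arguments are close in spirit, and your ``easy'' inclusion (evaluating $p\in\A$ at $\undl{\ll}_0$) is a nice explicit complement, but the properness phrasing is a little more conceptual and avoids appealing to semicontinuity.

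For part (b), the paper is slicker: having noted $\dim(V)=\dim(\kk[\undl z])$, it exhibits a single special point $\llb_0$ (first $\ell-1$ rows general, last row zero) with $\height(\H_{\llb_0})=\ell-1$, so that the fiber over $\llb_0$ has dimension one; semicontinuity then pins the generic fiber dimension down to exactly one, and the formula $\trdeg_U(V)=\dim(V)-\dim(U)$ gives $\height(\A)=1$. You instead compute $\trdeg_\kk(\kk[\undl z]/\A)$ directly: after a linear change of basis making $f_1$ a non-zerodivisor modulo $\P$, you localize at $f_1$, identify $\big(\F[\undl z]/\P\big)_{f_1}\cong\F[\undl z']_{f_1}$ and the image of $\kk[\undl z]/\A$ as $\kk[\undl z',g_1/f_1,\ldots,g_\ell/f_1]$, and finish with a generic Noether normalization of $A=[\F_{f_1}]_0$ to show $\trdeg_{\kk(\undl z')}\kk(\undl z')(g_1/f_1,\ldots,g_\ell/f_1)=\ell-1$. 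This is more explicit and self-contained but requires more bookkeeping; the paper's version avoids the localization-and-normalization step entirely at the price of one more appeal to semicontinuity. One small presentational caution: ``system of parameters'' is a local notion, so it is cleaner to say that the $\ell-1$ generic linear combinations of $f_2/f_1,\ldots,f_u/f_1$ form a Noether normalization of $A$ (equivalently, are a transcendence basis), which is exactly what the Noether normalization lemma over an infinite field gives on a dense open set of coefficients, and hence at the generic point given by the variables $z_{ij}$.
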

\begin{proof} 
To prove (a) we write  $T=\F[\undl{z}]/\H$ and consider the natural map $\phi : \Proj (T) \rightarrow {\rm{Spec}}(\kk[\undl{z}])$. 
Clearly  ${\rm Im}(\phi)\subseteq V(\A)$; we claim that $V(\A)={\rm Im}(\phi)$. For this, we first  note that  $\,\H:_{\F[\undl{z}]} \F_+^\infty\,$ is a  prime ideal of height $\ell$ by  \autoref{prime}. Therefore, we have an inclusion of domains 
\begin{equation*}\label{incl_domains}
U:=\kk[\undl{z}]/\A\xhookrightarrow{\,\,\,\,\, \,\,\,\,\, } V:=\F[\undl{z}]/(\H:_{\F[\undl{z}]} \F_+^\infty).
\end{equation*}
Since 
\begin{equation}\label{bigger_than_one}
	\dim\left( V\otimes_{U}{\rm Quot}(U) \right)=\height(\F_+V)\gs 1, 
\end{equation}	
by semicontinuity of fiber dimension \cite[Theorem 14.8 (b)]{E} we have that $\dim\left( V\otimes_U\kappa(P)\right)\gs 1$ for every $P\in \Spec(U)$. Therefore $P\in {\rm Im}(\phi)$ for every $P\in \Spec(U)$, whence the claim follows. 

A point $\undl{\ll}\in \AA_\kk^{\ell u}$ belongs to ${\rm Im}(\phi)$ if and only if 
 $\dim \left(T\otimes_{\kk[\undl{z}]}  \left(\kk[\undl{z}]/ {(\zb-\llb)}\right)\right) >0$. 
 Since $$T\otimes_{\kk[\undl{z}]}  \left(\kk[\undl{z}]/ {(\zb-\llb)}\right)\cong \F/\H_{\llb},$$ the last condition is equivalent to $\F_{+} \not\subseteq \sqrt{\H_{\llb}}$, which means that $J_{\llb}$ is not a reduction of $I$.

For part (b), it remains to show that $\height(\A)=1$. 
 We first observe that 
\begin{eqnarray*}
\dim(V)  = \dim\left(\F[\undl{z}]\right) - \height\left(\H:_{\F[\undl{z}]}\F_+^\infty\right) 
=\left(\ell+ \dim(\kk[\undl{z}])\right) - \ell=\dim(\kk[\undl{z}]).
\end{eqnarray*} 
We think of points in $\AA_\kk^{\ell u}$ as $\ell \times u$ matrices.
 If $\llb_0 \in \AA_\kk^{\ell u}$ is a matrix whose first $\ell-1$  rows are general and whose last row consists of zeros, then $\height\left(\H_{\llb_0}\right)=\ell -1$. 
  Therefore,  $\F/\H_{{\llb}_0}=V\otimes_U \left(U/(\zb-{\llb_0})\right)$ has dimension one,  
  which shows that $\dim\left(V\otimes_{U}{\rm Quot}(U)\right)= 1$ by  \cite[Theorem 14.8 (b)]{E} and \autoref{bigger_than_one}. Thus, $$1= \trdeg_U(V) = \trdeg_\kk(V) - \trdeg_\kk(U)= \dim(V)  - \dim(U)=\dim(\kk[\undl{z}])-\dim(U)= \height(\A),$$
  completing the proof. 
\end{proof}

Following \autoref{Notation3}, one can see that every homogeneous reduction of $I$ contains a reduction generated by $\ell$ homogeneous elements of degree $\delta$, which is necessarily of the form $J_{\llb}$ for some $\llb \in \AA_{\kk}^{\ell u}$. 

\begin{corollary}
With assumptions as in \autoref{Notation3} and \autoref{nonRedLocus}, we have
$$\gradedcore (I) =\bigcap \limits_{\llb \not\in V(\A)} J_{\llb}.$$  
\end{corollary}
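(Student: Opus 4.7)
My plan is to establish both containments separately, leaning heavily on \autoref{nonRedLocus} and on the observation stated immediately before the corollary.

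For the forward containment $\gradedcore(I) \subseteq \bigcap_{\llb \not\in V(\A)} J_{\llb}$, I would use \autoref{nonRedLocus}(a) to identify the complement of $V(\A)$ with the set of $\llb \in \AA^{\ell u}_{\kk}$ for which $J_{\llb}$ is a reduction of $I$. Since each generator $b_i = \sum_{j} z_{ij} f_j$ specializes to a homogeneous form of degree $\delta$ (the $f_j$ all having degree $\delta$), any such $J_{\llb}$ is a homogeneous reduction of $I$. The graded core, being the intersection of \emph{all} homogeneous reductions, is therefore contained in the intersection over this particular subfamily.

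For the reverse containment, I would let $K$ be an arbitrary homogeneous reduction of $I$ and exhibit a single $\llb \not\in V(\A)$ with $J_{\llb} \subseteq K$; then $\bigcap_{\llb' \not\in V(\A)} J_{\llb'} \subseteq J_{\llb} \subseteq K$, and intersecting over all such $K$ gives the desired inclusion in $\gradedcore(I)$. To produce such a $\llb$, I would first reduce to the case where $K$ is generated in the single degree $\delta$: comparing degrees on both sides of $I^{n+1} = KI^n$ for $n \gg 0$ forces the minimum generating degree of $K$ to be $\delta$, and in that degree a direct check yields $(K_{\delta}) I^n = I^{n+1}$, so $(K_{\delta})R \subseteq K$ is itself a reduction. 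Writing a $\kk$-basis of $K_{\delta}$ in terms of $f_1, \ldots, f_u$ and taking $\ell$ generic $\kk$-linear combinations of that basis, I would obtain, via the standard argument over an infinite field, a minimal reduction of $\F_+$ inside the image of $K_{\delta}$ in $\F_1$, and hence after lifting a reduction of $I$. By construction this reduction has the shape $J_{\llb}$ for some $\llb$, which is then forced to lie outside $V(\A)$.

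Modulo \autoref{nonRedLocus} and the observation preceding the statement, the corollary is a short bookkeeping argument. The main obstacle lies in the observation, which in turn rests on two ingredients: the degree count that allows replacement of a general homogeneous reduction by one generated in degree $\delta$, and the classical fact that $\ell$ generic $\kk$-linear combinations of a generating set of a reduction of the homogeneous maximal ideal of $\F$ still form a (minimal) reduction, with $\ell = \dim \F = \ell(I)$.
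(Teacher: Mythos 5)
Your proof is correct and follows the same route the paper intends. The paper states this result without proof, relying on Proposition~\ref{nonRedLocus}(a) together with the observation recorded just before the corollary (``every homogeneous reduction of $I$ contains a reduction generated by $\ell$ homogeneous elements of degree $\delta$, which is necessarily of the form $J_{\llb}$''); your argument simply spells out the details of that observation --- the degree count showing $(K_\delta)R$ is already a reduction, and the passage to $\ell$ general $\kk$-linear combinations in $K_\delta$ --- and then applies Proposition~\ref{nonRedLocus}(a) to conclude $\llb\notin V(\A)$.
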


In the  following  result  we show that for a monomial ideal $I$,     $\left(\mono(J):_{R[\undl{z}]}(h)^\infty\right)\cap R$ is contained in every homogeneous reduction  of $I$.  
In fact,  we conjecture that this ideal is equal to $\gradedcore(I)$ (see \autoref{conjectureColon}). 
Here we think of $J$ as an ideal in the polynomial ring $A[x_1,\ldots, x_d]$ with $A=\kk[\undl{z}]$ (see \autoref{generalmono}).  For a vector $\mathbf{w}=(w_1,\ldots, w_d)\in \NN^d$ we denote by $\fx^\mathbf{w}$ the monomial $x_1^{w_1}\cdots x_d^{w_d}$.

\begin{theorem}\label{mainOneDegree}
	In addition to the assumptions of   \autoref{Notation3}    
    we suppose that 
	$f_1, \ldots, f_u$ are monomials. 
	If $h\in\kk[\undl{z}]$ is as in \autoref{nonRedLocus}$\, ($b$)$, then $$\left(\mono(J):_{R[\undl{z}]}(h)^\infty\right) \cap R \subseteq \gradedcore(I).$$
\end{theorem}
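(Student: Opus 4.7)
The strategy is to combine the corollary immediately preceding \autoref{mainOneDegree}, which identifies $\gradedcore(I) = \bigcap_{\undl{\ll} \notin V(h)} J_{\undl{\ll}}$, with the tautological containment $\mono(J) \subseteq J$. Under this description, it suffices to show that any $\phi \in R$ satisfying $h^N \phi \in \mono(J)$ for some $N \geq 0$ lies in each reduction $J_{\undl{\ll}}$ with $\undl{\ll} \in \AA_{\kk}^{\ell u} \setminus V(h)$.

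First I would pick $\phi \in (\mono(J) :_{R[\undl{z}]} (h)^\infty) \cap R$ and choose an exponent $N$ with $h^N \phi \in \mono(J)$. The key step is to specialize: fix $\undl{\ll} \notin V(h)$ and apply the evaluation homomorphism $\pi_{\undl{\ll}} : R[\undl{z}] \to R$. Since $\pi_{\undl{\ll}}$ is a ring homomorphism and $\mono(J) \subseteq J$ by the definition of $\mono(\cdot)$, it follows that $\pi_{\undl{\ll}}(\mono(J)) \subseteq \pi_{\undl{\ll}}(J) = J_{\undl{\ll}}$. Applying $\pi_{\undl{\ll}}$ to $h^N \phi \in \mono(J)$, and using that $\pi_{\undl{\ll}}$ fixes elements of $R$, the relation becomes $h(\undl{\ll})^N \phi \in J_{\undl{\ll}}$. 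Because $h(\undl{\ll}) \in \kk^{\times}$, the scalar $h(\undl{\ll})^N$ is a unit in $R$, and therefore $\phi \in J_{\undl{\ll}}$, as desired.

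I do not expect a real obstacle in this direction. The argument is essentially formal once the preceding corollary is in hand; all of the substantive content has already been absorbed into \autoref{nonRedLocus}, which identified $V(h)$ as the non-reduction locus and showed it is cut out by a single irreducible polynomial in $\kk[\undl{z}]$. The genuinely hard direction is the reverse inclusion (conjecturally an equality), for which one would need to control how \emph{all} monomial elements of $J$ specialize simultaneously over the complement of $V(h)$, and that is not attempted here.
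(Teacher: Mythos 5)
Your proof is correct and is in fact cleaner than the paper's. The paper also reduces the claim to showing that $\fx^\fv \in J_{\undl{\ll}}$ for every $\undl{\ll} \notin V(h)$ and then intersects (implicitly using the same corollary that you cite), but it reaches that conclusion by a longer route: it unwinds the definition of $\mono$ via the multihomogenization, passes through the chain
$$\pi_{\undl{\ll}}(\mono(J)) = \pi_{\undl{\ll}}\left((\widetilde{b_1},\ldots,\widetilde{b_\ell}):Y^\infty \cap R[\undl{z}]\right) \subseteq \left(\pi_{\undl{\ll}}(\widetilde{b_1},\ldots,\widetilde{b_\ell}):Y^\infty\right)\cap R = \mono(J_{\undl{\ll}}) \subseteq J_{\undl{\ll}},$$
establishing the slightly finer intermediate fact $\pi_{\undl{\ll}}(\mono(J)) \subseteq \mono(J_{\undl{\ll}})$ for \emph{every} $\undl{\ll}$ off $V(h)$ (not only general $\undl{\ll}$ as in \autoref{indep}). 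Your observation that $\mono(J) \subseteq J$ by \autoref{generalmono}, hence $\pi_{\undl{\ll}}(\mono(J)) \subseteq \pi_{\undl{\ll}}(J) = J_{\undl{\ll}}$ for a plain ring-homomorphism reason, bypasses all of the saturation-by-$Y$ bookkeeping and gives exactly what the theorem needs. The only thing you lose is the extra information that the image lands in the mono of the specialization, which the paper does not use here either; so your shortcut is a genuine simplification. The rest of your argument — specializing $h^N\phi \in \mono(J)$ at $\undl{\ll} \notin V(h)$, noting $\pi_{\undl{\ll}}(\phi)=\phi$ and $\pi_{\undl{\ll}}(h)=h(\undl{\ll}) \in \kk^\times$, and dividing by the unit $h(\undl{\ll})^N$ — matches the paper's handling of the colon and is correct.
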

\begin{proof}
	Let $\fx^\fv \in \pars{\mono(J):_{R[\undl{z}]}(h)^\infty} \cap R$. Then  $\fx^\fv h^N\in \mono(J)$ for $N\gg 0$. By \autoref{nonRedLocus}  for each $\undl{\lambda}$ such that  $J_{\undl{\lambda}}$  is a reduction of $I$ we have $\pi_{\undl{\lambda}}(h)\neq 0$. Hence, setting $Y=\prod y_i$ as in \autoref{Ltilde} and using \autoref{monoeqn}, we obtain
	\begin{align*}
		\fx^\fv\in \pi_{\undl{\lambda}}(\mono(J)) &= \pi_{\undl{\lambda}}\left( \left((\widetilde{b_1}, \ldots, \widetilde{b_{\ell}}):_{R[\undl{z},\undl{y}]}Y^{\infty}\right)\cap R[\undl{z}]\right)\\
		&\subseteq \pi_{\undl{\lambda}}\left((\widetilde{b_1}, \ldots, \widetilde{b_{\ell}}):_{R[\undl{z},\undl{y}]}Y^{\infty}\right)\cap R\\
		&\subseteq (\pi_{\undl{\lambda}}(\widetilde{b_1}, \ldots, \widetilde{b_{\ell}}):_{R[\undl{y}]}Y^{\infty})\cap R=\mono(J_{\undl{\lambda}})\subseteq J_{\undl{\lambda}}.
	\end{align*}
	Taking the intersection over all such $\undl{\lambda}$ we obtain $\fx^\fv\in \gradedcore(I)$, as desired.
\end{proof}

We propose the following conjecture based on the previous result and computational evidence.

\begin{conjecture}\label{conjectureColon}
	Let $I$ and $h$ be as in \autoref{mainOneDegree}. Then $$\gradedcore(I)=(\mono(J):_{R[\undl{z}]}(h)^\infty) \cap R.$$
\end{conjecture}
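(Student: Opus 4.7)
The inclusion $(\mono(J):_{R[\undl{z}]}(h)^\infty)\cap R \subseteq \gradedcore(I)$ is \autoref{mainOneDegree}, so only the reverse inclusion needs work. Fix a monomial $\fx^\fv \in \gradedcore(I)$ and set $C_\fv:=(\mono(J):_{R[\undl{z}]}\fx^\fv)\cap \kk[\undl{z}]$, the coefficient ideal attached to $\fx^\fv$ in $\mono(J)$. The desired membership $\fx^\fv\in(\mono(J):_{R[\undl{z}]}(h)^\infty)\cap R$ is equivalent to $h\in\sqrt{C_\fv}$, i.e.\ to $V(C_\fv)\subseteq V(h)$ in $\Spec \kk[\undl{z}]$, and this is what must be proved.

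By the corollary following \autoref{nonRedLocus}, the hypothesis $\fx^\fv\in\gradedcore(I)$ amounts to $\fx^\fv\in J_{\undl{\lambda}}$ for every $\undl{\lambda}$ in the principal open $D(h)=\AA_\kk^{\ell u}\setminus V(h)$. Combining this with \autoref{indep}$(c)$, one already obtains that $V(C_\fv)\cap D(h)$ lies in a proper closed subset of $D(h)$: on the dense open locus where $\pi_{\undl{\lambda}}(\mono(J))=\mono(J_{\undl{\lambda}})$, the monomial $\fx^\fv$ belongs to $\pi_{\undl{\lambda}}(\mono(J))$, so $\undl{\lambda}\notin V(C_\fv)$ there. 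The substantive task is to eliminate $V(C_\fv)\cap D(h)$ altogether.

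The plan is to lift the pointwise expressions $\fx^\fv=\sum_i r_i^{\undl{\lambda}}(\fx)\,b_i|_{\undl{z}=\undl{\lambda}}$, available for each $\undl{\lambda}\in D(h)$, to a single identity $a(\undl{z})\,\fx^\fv=\sum_i r_i(\fx,\undl{z})\,b_i$ in $R[\undl{z}]$ with $a\in\kk[\undl{z}]$ and $V(a)\subseteq V(h)$; the irreducibility of $h$ from \autoref{nonRedLocus}$(b)$ would then force $a$ to be a scalar multiple of some $h^N$, yielding $h^N\fx^\fv\in J$ and hence $h^N\fx^\fv\in\mono(J)$. I would attempt two ingredients. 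First, use the primeness of $J:I^\infty$ from \autoref{prime} together with the identity $(\H:\F_+^\infty)\cap\kk[\undl{z}]=(h)$ to produce a reduction number for $J$ that is uniform over $D(h)$, giving some $r$ with $I^{r+1}R[\undl{z}]_h=J\cdot I^r R[\undl{z}]_h$. Second, run a Gr\"obner basis argument over $\kk[\undl{z}]_h$ in the spirit of the proof of \autoref{GB}, eliminating the $\undl{y}$-variables from a multihomogenization of $J$, to show that $\mono(J)\cdot R[\undl{z}]_h$ already captures every monomial of $J\cdot R[\undl{z}]_h$ lying in $R$.

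The principal obstacle is precisely the failure of $\pi_{\undl{\lambda}}(\mono(J))=\mono(J_{\undl{\lambda}})$ at non-generic $\undl{\lambda}\in D(h)$: at such ``jumping'' points, extra monomials appear in $\mono(J_{\undl{\lambda}})$ that do not lift to $\mono(J)$. Because $R[\undl{z}]/J$ need not be flat over $\kk[\undl{z}]$, a direct flatness argument is unavailable, and one must prove that the entire jumping locus lies inside $V(h)$; this is the conjectural heart of the statement. It is also precisely where the hypothesis of \autoref{thm_constant_c}---that the coefficient ideals $C_m$ be constant up to radical---renders the lifting automatic, explaining why that theorem holds with no residual conditions while the general conjecture remains open.
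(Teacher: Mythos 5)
The statement you were given is stated as a \emph{Conjecture} in the paper (\autoref{conjectureColon}), and the paper does not prove it. What the paper establishes is only the one inclusion $\left(\mono(J):_{R[\undl{z}]}(h)^\infty\right)\cap R \subseteq \gradedcore(I)$, namely \autoref{mainOneDegree}, plus the special case \autoref{thm_constant_c} where all coefficient ideals $C_i$ have the same radical. You correctly identified both of these facts, so there is no ``paper's own proof'' to compare against. Your write-up is an honest outline of the difficulty, not a proof, and you said so.

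Your reformulation is correct and useful: since $\mono(J)$ is $\NN^d$-graded in the $x$-variables with $[\mono(J)]_\fv = C_\fv\,\fx^\fv$, the membership $\fx^\fv\in(\mono(J):h^\infty)\cap R$ is indeed equivalent to $h\in\sqrt{C_\fv}$, i.e.\ $V(C_\fv)\subseteq V(h)$. Your first observation is also sound: from the corollary following \autoref{nonRedLocus} one has $\fx^\fv\in J_{\undl{\ll}}$ for every $\undl{\ll}\in D(h)$, and for $\undl{\ll}$ in a dense open subset \autoref{indep}(c) gives $\fx^\fv\in\pi_{\undl{\ll}}(\mono(J))$; since $C_{\fw}\subseteq C_{\fv}$ whenever $\fx^{\fw}\mid\fx^{\fv}$, this forces $\undl{\ll}\notin V(C_\fv)$, so $V(C_\fv)\cap D(h)$ is a proper closed subset of $D(h)$. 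Your identification of the obstruction --- controlling the ``jumping'' locus in $D(h)$ where $\pi_{\undl{\ll}}(\mono(J))\subsetneq \mono(J_{\undl{\ll}})$, with no flatness available --- is exactly the conjectural content; this matches the paper's own framing (see \autoref{discus_one_degree}), and your remark that \autoref{thm_constant_c} is precisely the case where this jumping is a non-issue is also accurate. In short, there is no error in what you wrote, but there is also no proof, because the statement is open in the paper as well.
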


In our next result, we show that the content ideal of $\mono(J)$ is principal and that it is generated by the irreducible polynomial $h$ from \autoref{nonRedLocus} (b).  We use this result to verify \autoref{conjectureColon} for  specific examples at the end of the section.  
Before we proceed we need to fix more notation. 

\begin{notation}\label{Notation4} 	In addition to the assumptions of   \autoref{Notation3}    
    we suppose that 
	$f_1, \ldots, f_u$ are monomials. Let  $\fv_1,\ldots, \fv_r\in \NN^d$ be distinct vectors such that  $\mono(J) = C_1(\fx^{\fv_1}) + \ldots+C_r(\fx^{\fv_r})$, where  $C_1,\, \ldots, C_r $ are ideals of $\kk[\undl{z}]$ (see \autoref{generalmono}). The ideal $\C=C_1+\ldots+C_r$ is called the {\it content ideal} of $\mono(J)$. 
     We note that the set of monomials $\mathcal{M}:=\{\fx^\fv_1,\ldots, \fx^\fv_r\}$ generates $\mono(J_{\undl{\lambda}})$ 
     for general $\undl{\lambda}$  by \autoref{indep} (b).

Let $\fx^\fv=\lcm(f_1,\ldots, f_u)$ and for each $f_i$ let $g_i$ be the monomial in $R[\undl{y}]=R[y_1,\ldots, y_d]$ such that  $\multdeg(g_i)=\fv$ and $\multdeg_{x_j}(g_i)=\multdeg_{x_j}(f_i)$ for every $i$ and $j$.  Notice that $\sum_{j=1}^u z_{i,j}g_j$ is $\widetilde{b_i}$, the multihomogenization of $b_i \in \kk[\undl{z}][x_1, \ldots, x_d]$. 
Let $\widehat{f_i}$ be the element of $\kk[\undl{y}]$ such that $\multdeg_{y_j}(\widehat{f_i})=\multdeg_{y_j}(g_i)$, that is $g_i=f_i\widehat{f_i}$, and set $\widehat{I}=(\widehat{f_1},\ldots, \widehat{f_u}) \subseteq \kk[\undl{y}]$. The ideal $\widehat{I}$ is the {\it Newton complementary dual} of $I$ defined in \cite{CS} (see also \cite{ALS}).
\end{notation}

\begin{theorem}\label{ContentIsPrincipal}
Let $\C$ be as in \autoref{Notation4} and let $h\in \kk[\undl{z}]$ be  as in \autoref{nonRedLocus} $($b$)$. Then $\C=(h).$
\end{theorem}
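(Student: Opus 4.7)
My plan is to prove $\C = (h)$ by establishing both inclusions, relying on the relation between $\A = (h)$ and the saturation ideals of $\H$ and $J$. For $(h) \subseteq \C$: since $h \in \A = (\H :_{\F[\undl{z}]} \F_+^\infty) \cap \kk[\undl{z}]$ by \autoref{nonRedLocus}(b), there exists $N \in \NN$ with $h \F_+^N \subseteq \H$. Applying this to any $N$-fold product $\fx^\mu = f_{i_1} \cdots f_{i_N}$ of generators of $I$ (which is a monomial since each $f_j$ is), we obtain $h \fx^\mu \in \H \subseteq J$; since $h \fx^\mu$ has the form (element of $\kk[\undl{z}]$)(monomial in $\fx$), it lies in $\mono(J)$. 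Thus $h$ appears as a coefficient in the decomposition of $\mono(J)$, so $h \in \C$.

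For $\C \subseteq (h)$: by linearity of the coefficient decomposition of $\mono(J)$, it suffices to show that whenever $c\fx^\mu \in \mono(J) \subseteq J$ for a single $c \in \kk[\undl{z}]$ and monomial $\fx^\mu$, one has $c \in (h)$. The key tool is the prime ideal $\P := J :_{R[\undl{z}]} I^\infty$ of height $\ell$ guaranteed by \autoref{prime}. After a generic $\kk$-linear change of basis in $f_1, \ldots, f_u$ (as in the proof of \autoref{prime}) making $f_1$ a non-zerodivisor modulo $\P$, the explicit description $R[\undl{z}]_{f_1}/\P_{f_1} \cong R_{f_1}[z_{i,j} : j \geq 2]$ shows that no nonzero monomial of $R$ lies in $\P$, so $\fx^\mu \notin \P$. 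Since $\P$ is prime and $c\fx^\mu \in J \subseteq \P$, we conclude $c \in \P \cap \kk[\undl{z}]$.

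The main obstacle is then to establish the identification $\P \cap \kk[\undl{z}] = (h)$, which closes the argument. The inclusion $(h) \subseteq \P \cap \kk[\undl{z}]$ follows by extending $h \F_+^N \subseteq \H$ to $R[\undl{z}]$: one gets $h I^N \subseteq J$, so $h \in J :_{R[\undl{z}]} I^\infty = \P$. For the reverse inclusion, since $(h)$ is radical ($h$ being irreducible), it suffices to show every $c \in \P \cap \kk[\undl{z}]$ vanishes on $V(h) = V(\A) \subseteq \AA^{\ell u}_{\kk}$. Writing $c f_1^N = \sum_i \alpha_i b_i$ in $R[\undl{z}]$, I would fix a general $\undl{\ll} \in V(h)$ and locate a point $p \in V(J_{\undl{\ll}})$ with $f_1(p) \neq 0$; evaluating the relation at $(p, \undl{\ll})$ yields $\pi_{\undl{\ll}}(c) f_1(p)^N = 0$, hence $\pi_{\undl{\ll}}(c) = 0$. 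The delicate geometric point is producing such $p$: one needs $f_1 \notin \sqrt{J_{\undl{\ll}}}$ for general $\undl{\ll} \in V(h)$, which I would justify via a dimension count on the projection $V(\P) \to \Spec \kk[\undl{z}]$, comparing $\dim V(\P) = d + \ell u - \ell$ with the expected generic fiber dimension over its image and using that the image is dense in the irreducible $(\ell u - 1)$-dimensional variety $V(h)$.
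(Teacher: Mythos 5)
Your overall strategy is sound and genuinely different from (and shorter than) the paper's: the paper proves $\height(\mathcal C)=1$ via a large commutative diagram passing through the special fiber ring of the Newton complementary dual $\widehat I$, whereas you work directly with the prime $\mathcal P := J:_{R[\undl z]}I^\infty$ from \autoref{prime}. Your first two steps are correct: for $(h)\subseteq \mathcal C$, pick any monomial $\fx^\mu = f_{i_1}\cdots f_{i_N}\in I^N$ so that $h\fx^\mu\in\mathcal H\subseteq J$ is a monomial with $\kk[\undl z]$-coefficient and hence lies in $\mono(J)$, giving $h\in\mathcal C$; and for $\mathcal C\subseteq \mathcal P\cap\kk[\undl z]$, the explicit description of $\mathcal P_{f_1}$ from the proof of \autoref{prime} shows $\mathcal P\cap R=0$, so $\fx^{\fv_i}\notin\mathcal P$ while $C_i\fx^{\fv_i}\subseteq \mono(J)\subseteq J\subseteq\mathcal P$, and primality of $\mathcal P$ gives $C_i\subseteq\mathcal P\cap\kk[\undl z]$.

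The gap is in the final step. You reduce everything to the identity $\mathcal P\cap\kk[\undl z]=(h)$, which is indeed true, but your proposed justification is circular: you want to produce, for general $\undl\ll\in V(h)$, a point $p\in V(J_{\undl\ll})$ with $f_1(p)\neq 0$, and you say you would do so by a dimension count ``using that the image is dense in the irreducible $(\ell u-1)$-dimensional variety $V(h)$.'' But the closure of the image of $V(\mathcal P)\to\AA^{\ell u}_\kk$ is exactly $V(\mathcal P\cap\kk[\undl z])$, so the density you invoke is the very statement $\mathcal P\cap\kk[\undl z]=(h)$ you are trying to prove. The identity does have a short direct proof that sidesteps the density question entirely: localize at $f_1$. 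Since $\mathcal P$ and $\mathcal H:_{\mathcal F[\undl z]}\mathcal F_+^\infty$ are primes not containing $f_1$ (setting $\undl z=0$ shows $f_1^N\notin J$ and $f_1^N\notin\mathcal H$ for all $N$), and both localize to the explicit complete intersections $J_{f_1}$, resp.\ $\mathcal H_{f_1}$, one has $\mathcal P\cap\kk[\undl z]=J_{f_1}\cap\kk[\undl z]$ and $(h)=\mathcal A=\mathcal H_{f_1}\cap\kk[\undl z]$. The quotient $\mathcal F[\undl z]_{f_1}/\mathcal H_{f_1}\cong\mathcal F_{f_1}[z_{i,j}:j\gs 2]$ embeds in $R[\undl z]_{f_1}/J_{f_1}\cong R_{f_1}[z_{i,j}:j\gs 2]$ because $\mathcal F\hookrightarrow R$, so $\mathcal H_{f_1}=J_{f_1}\cap\mathcal F[\undl z]_{f_1}$, and intersecting further with $\kk[\undl z]$ gives $\mathcal P\cap\kk[\undl z]=(h)$. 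With this replacement your proof becomes a complete and notably more elementary alternative to the paper's argument.
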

\begin{proof}
We prove the result by constructing a $\kk[\undl{z}]$-isomorphism 
$$
\begin{tikzcd}
\eta: \dfrac{\kk[\undl{z}]}{(h)} \arrow[r, rightarrow, "\sim"] & 
\dfrac{\kk[\undl{z}]}{\C}\, .
\end{tikzcd}
$$
For this we consider the following diagram, which we explain in the rest of the proof. 

\begin{center}
\begin{tikzcd}
&\hspace{-3.5cm}\dfrac{T}{(\widetilde{b_1}, \ldots, \widetilde{b_{\ell}}):Y^\infty}\stackrel{(1)}{=}\dfrac{T}{(\widetilde{b_1}, \ldots, \widetilde{b_{\ell}}):G^\infty}&\\

\hspace{-0.5cm}\dfrac{\F[\undl{z}]}{\H:\F_+^\infty} \arrow[r,  "\overline{\varphi}"] & \dfrac{S}{(\widetilde{b_1}, \ldots, \widetilde{b_{\ell}}):G^\infty} \arrow[u, hook,  "\psi"] \arrow{r} {\overline{\phi}}[swap]{\sim}&\dfrac{\F(\widehat{I})[\undl{z}]}{(\widehat{b_1}, \ldots, \widehat{b_\ell}): \widehat{F}^{\infty}}\,[\undl{x}, \undl{x}^{-1}] \\

\hspace{-0.5cm} \arrow[u, hook] \dfrac{A}{(h)} \arrow[dr, two heads, "\eta" '] 
& \hspace{-0.5cm}\dfrac{A[\undl{x}, \undl{x}^{-1}]}{ \mono(J)\,A[\undl{x}, \undl{x}^{-1}]}\arrow[u, hook, "\chi"]= \dfrac{A}{\C }\,[\undl{x}, \undl{x}^{-1}] \arrow{r} {\Phi}[swap]{\sim}&\dfrac{A}{(q)}\,[\undl{x}, \undl{x}^{-1}] \arrow[u, hook]\\ 

& \dfrac{A}{\C}\arrow[u, hook]& \dfrac{A}{(q)}\arrow[u, hook]
\end{tikzcd}
\end{center}

Write $$A:=\kk[\undl{z}] \subseteq S:=A[\undl{x}, \undl{x}^{-1}][g_1, \ldots, g_u] \subseteq T:=A[\undl{x}, \undl{x}^{-1}][y_1, \ldots, y_d],$$ 
and set $\,Y=\prod \limits_{j=1}^{d}y_j, \ F=\prod \limits_{j=1}^{u}f_j, \ G=\prod \limits_{j=1}^{u}g_j$, and $\widehat{F}=\prod \limits_{j=1}^{u}\widehat{f_j}$.

The equality (1) at the top of the diagram follows from \autoref{prime}  since $Y\in \sqrt{(g_1, \ldots, g_u)\kk[\undl{x}, \undl{x}^{-1}][y_1, \ldots, y_d]}$ as $x_i$ are units.

We continue by constructing the map $\psi$. The inclusion $S\subset T$ induces an $A$-algebra homomorphism 
$$	\begin{tikzcd}
\psi: \dfrac{S}{(\widetilde{b_1}, \ldots, \widetilde{b_{\ell}}):_S G^\infty}  
\arrow[r, rightarrow] &
\dfrac{T}{(\widetilde{b_1}, \ldots, \widetilde{b_{\ell}}):_TG^\infty}\, . 
\end{tikzcd}
$$ 
We claim  $\psi$ is injective. Since $G$ is a non-zerodivisor modulo  
$(\widetilde{b_1}, \ldots, \widetilde{b_{\ell}}):_SG^\infty$, it suffices to show that $\psi \otimes_S S_G$ is injective. 
Write $z_{i}'=z_{i,1}+g_1^{-1}\sum_{j=2}^uz_{i, j }g_j\in S_G$ for $1\ls i\ls \ell$. 
Notice that 
$$\left((\widetilde{b_1}, \ldots, \widetilde{b_{\ell}}):_SG^\infty\right)_G=(\widetilde{b_1}, \ldots, \widetilde{b_{\ell}}) S_G= (z_1', \ldots, z_{\ell}')S_G\, $$
and similarly 
$$\left((\widetilde{b_1}, \ldots, \widetilde{b_{\ell}}):_TG^\infty\right)_G=  (z_1', \ldots, z_{\ell}')T_G\, .$$
Consider the two rings
 $$B:=\kk[\undl{x},  \undl{x}^{-1}][\undl{g},  G^{-1}][\{z_{i,j} \mid j\gs 2\}] \subseteq C:=\kk[\undl{x},  \undl{x}^{-1}][\undl{y},  G^{-1}][\{z_{i,j} \mid j\gs 2\}].$$
One has $S_G=B[z_1', \ldots, z_{\ell}']$ and $T_G=C[z_1', \ldots, z_{\ell}']$, and $z_1', \ldots, z_{\ell}'$ are variables over $B$ and $C$. Clearly,	
$$	\begin{tikzcd}
			\dfrac{B[z_1', \ldots, z_{\ell}']}{(z_1', \ldots, z_{\ell}')}   \arrow[r, hookrightarrow] & \dfrac{C[z_1', \ldots, z_{\ell}']}{(z_1', \ldots, z_{\ell}')} \, ,
	\end{tikzcd}$$
which proves the claim. 

Next we deal with the map $\overline{\varphi}$. 
Define a map of $A$-algebras 
$$\varphi: \F[\undl{z}]\rightarrow S$$
given by $\varphi(f_i)=g_i$. 
To prove that $\varphi$ is well-defined, let $p$ be a polynomial with coefficients in $A$ such that $p(f_1,\ldots, f_u)=0$ and fix $\fw=(w_1,\ldots, w_d)\in \NN^d$. 
Let $p_\fw$ be the sum of the terms $p'$ of $p$ such that $\multdeg\left(p'(f_1,\ldots, f_u)\right)=\fw$. Therefore $$p_\fw(g_1,\ldots, g_u)=\fy^{\frac{\sum_i w_i}{\delta}\fv-\fw}p_\fw(f_1,\ldots, f_u)=0.$$ We conclude that $p(g_1,\ldots, g_u)=0$ showing that $\varphi$ is well-defined. 
Notice that $\varphi(b_i)=\sum \limits_{j=1}^{u} z_{i,j}g_j=\widetilde{b_i}$, hence $\varphi(\H)\subseteq (\widetilde{b_1}, \ldots, \widetilde{b_\ell})$. 
 Therefore, we have
$\varphi(\H:_{\F[\undl{z}]} F^{\infty})\subseteq (\widetilde{b_1}, \ldots, \widetilde{b_{\ell}}) :_{S} G^{\infty}$. 
Now \autoref{prime} shows that $\H:_{\F[\undl{z}]}\F_+^\infty =\H:_{\F[\undl{z}]} F^{\infty}$. 
It follows that $\varphi$ induces a homomorphism of $A$-algebras 
$$
\begin{tikzcd}
\overline{\varphi}: \  \dfrac{\F[\undl{z}]}{\H:\F_+^\infty}
\arrow[r, rightarrow] &  
\dfrac{S}{(\widetilde{b_1}, \ldots, \widetilde{b_{\ell}}):G^\infty}\,.
\end{tikzcd}
$$

Now we construct the isomorphism $\overline{\phi}$.
Notice that $\widehat{f_j}=a_j g_j$ where $a_j=f_j^{-1} \in \kk[\undl{x}, \undl{x}^{-1}]$ is a unit, in particular $\widehat{F}$ is equal to $G$ times a unit in $\kk[\undl{x}, \undl{x}^{-1}]$. Now 
 $$S=\kk[\widehat{f_1}, \ldots, \widehat{f_u}][\undl{z}][\undl{x}, \undl{x}^{-1}]=\F\big(\widehat{I}\,\,\big)[\undl{z}][\undl{x}, \undl{x}^{-1}]\,.$$

 Consider the automorphism $\phi$ of $S$ as an algebra over  $\kk[g_1, \ldots, g_u][\undl{x}, \undl{x}^{-1}]$ 
 that sends $z_{i,j}$ to $a_jz_{i,j}$. 
 Notice that 
$\phi$ maps $A[\undl{x}, \undl{x}^{-1}]$ onto itself and
sends $\widetilde{b_i}$ to $\widehat{b_i}:=\sum \limits_{j=1}^{u} z_{i,j}\widehat{f_j}$. Hence $\phi$ induces an isomorphism 
$$\overline{\phi}: \  \dfrac{S}{(\widetilde{b_1}, \ldots, \widetilde{b_{\ell}}):G^\infty}\  \longrightarrow \ \dfrac{\F\big(\widehat{I}\,\,\big)[\undl{z}]}{(\widehat{b_1}, \ldots, \widehat{b_\ell}):\widehat{F}^\infty}\,[\undl{x}, \undl{x}^{-1}] $$
that maps the image of $A[\undl{x}, \undl{x}^{-1}]$ onto itself.

We now deal with the map $\chi$. Recall that 
$
(\widetilde{b_1}, \ldots, \widetilde{b_{\ell}}):_{T}Y^\infty =
(\widetilde{b_1}, \ldots, \widetilde{b_{\ell}}):_{T}G^\infty
$
by the equality (1) at the top of the diagram. 
Hence the inclusion $A[\undl{x}, \undl{x}^{-1}] \subseteq S \subseteq T$ induces the natural embedding 
$$	\begin{tikzcd}
\chi: \dfrac{A[\undl{x}, \undl{x}^{-1}]}{\left( (\widetilde{b_1}, \ldots, \widetilde{b_{\ell}}):_{T}Y^\infty\right)\cap A[\undl{x},\undl{x}^{-1}]}  \arrow[r, hookrightarrow] &   \dfrac{S}{(\widetilde{b_1}, \ldots, \widetilde{b_{\ell}}):G^\infty}\, .	\end{tikzcd}$$
On the other hand, 
\begin{eqnarray*}
	\left( (\widetilde{b_1}, \ldots, \widetilde{b_{\ell}}):_{T}Y^\infty\right)\cap A[\undl{x}, \undl{x}^{-1}]&=&\left(\left( (\widetilde{b_1}, \ldots, \widetilde{b_{\ell}}):_{A[\undl{x}, \undl{y}]}Y^\infty\right)\cap A[\undl{x}])\right)A[\undl{x}, \undl{x}^{-1}]\\
&=&\mono (J)\, A[\undl{x}, \undl{x}^{-1}]=\C \, A[\undl{x}, \undl{x}^{-1}]\, ,
\end{eqnarray*}
where the penultimate equality holds by \eqref{monoeqn}. 

We continue by establishing the isomorphism $\Phi$. 
Since the isomorphism $\overline{\phi}$ maps the image $A[\undl{x}, \undl{x}^{-1}]$ onto itself, it follows that this map restricts to an isomorphism
$$
\begin{tikzcd}
\Phi: \ \dfrac{A}{\C }\,[\undl{x}, \undl{x}^{-1}] \arrow[r, rightarrow] &  
\dfrac{A}{\left( (\widehat{b_1}, \ldots, \widehat{b_\ell}):\widehat{F}^{\infty}\right) \cap A}\,[\undl{x}, \undl{x}^{-1}]\, . 
\end{tikzcd}
$$
By \autoref{nonRedLocus} (b) and \autoref{prime}, the ideal $\left((\widehat{b_1}, \ldots, \widehat{b_\ell}):_{\F(\widehat{I})[\undl{z}]}\widehat{F}^{\infty}\right)\cap A$ is generated by an irreducible polynomial $q$.

Finally we construct the desired map $\eta$. 
Recall that 
$(\H: \F_+^\infty)\cap A=(h)$
 by \autoref{nonRedLocus} (b).  
Since $\overline{\varphi}$  is a homomorphism of $A$-algebras, it induces an epimorphism of $A$-algebras 
$$
\begin{tikzcd}
\eta: \dfrac{A}{(h)} \arrow[r, twoheadrightarrow] & 
\dfrac{A}{\C}\, .
\end{tikzcd}
$$
It follows that $(h) \subseteq \C$.  On the other hand,  $$\height (\C)=\height\left( \C A[\undl{x}, \undl{x}^{-1}]\right)=\height\left( \, q \,A[\undl{x}, \undl{x}^{-1}]\right)=\height(q) = 1,$$
where the second equality holds because of the isomorphism $\Phi$. 
Since $(h)$ is a prime ideal of height 1, we conclude that $\C=(h)$, finishing the proof. 
\end{proof}

 The variation of the coefficient ideals occurring
in ${\text {mono}}(J)$ provides a tool to
distinguish between the monomials of $\mathcal M$ and possibly single
out the relevant ones: 

\begin{discussion}\label{discus_one_degree}
With assumptions as in \autoref{Notation4} and \autoref{ContentIsPrincipal}. 
We are now in a position to single out the set
$\mathcal N =\{\fx^{\fv_i} \in \mathcal{M} \mid \sqrt { C_i}= (h) \}$ of monomials with `maximal' coefficient ideals
and consider the sum of `non-maximal'  coefficient ideals $\mathcal D = \sum_i C_i$,
where $\fx^{\fv_i}$ ranges  over set  $\mathcal M \setminus \mathcal N$. Notice that the monomials in $\mathcal N$ generate the  ideal $\left(\mono(J):_{R[\undl{z}]}(h)^\infty\right) \cap R $ in \autoref{conjectureColon}.

We believe that the ideal $\mathcal D$ defines the closed subset of $\mathbb{A}_\kk^{\ell u}$
that identifies the `general special' reductions $J_{\undl{\ll}}$ needed to describe the graded core. Namely, we conjecture that
\begin{enumerate}[(a)]
\item $\gradedcore(I)=J_{\undl{\ll}}^{n+1}:I^n$  for $n \gg 0$ if $\lambda$ is general in $V(\mathcal D)$.
\item $\gradedcore(I)$ can be obtained by intersecting the mono of a general minimal reduction 
with finitely many $J_{\undl{\ll}}$
with $\lambda$ general in $V(\mathcal  D)$.
\end{enumerate}
\end{discussion}

We now verify  \autoref{conjectureColon} and Conjecture (b) in   \autoref{discus_one_degree} for a specific example.

\begin{example}\label{ex3}
Let $I=(x_1^3,x_1^2x_2,x_1x_3^2,x_3^3) \subseteq R=\mathbb{Q}[x_1,x_2,x_3]$ be as in \autoref{ex1}.  
Recall that $\gradedcore (I)$ is not a finite intersection of general  reductions of $I$ and is not $\mono(K)$, for a general reduction $K$. However, as it turns out, $\gradedcore (I)$ is a finite intersection of special reductions of $I$. 

There exist relatively prime non-constant polynomials $h$ and $g$ in $\mathbb{Q}[\undl{z}]$ such that 
$$\mono(J) = (h)(x_1^2,x_1x_2,x_1x_3,x_2x_3,x_3^2)I+ (hg)(x_1^2x_2^3, x_1x_2^2x_3^2, x_2^2x_3^3)=: (h)\mathfrak{A}+(hg)\mathfrak{B}.$$
By \autoref{nonRedLocus} and \autoref{ContentIsPrincipal} the  polynomial $h$ is irreducible and defines the locus where $J_{\llb}$ is not a reduction of $I$. 
As we have seen in \autoref{ex1} 
$ \core(I) \subseteq \gradedcore (I) \subseteq \mathfrak{A}+\mathfrak{B}= I\,\fm^2$.

Since $h$ and $g$ are relative prime, we obtain $\left(\mono(J):(h)^\infty\right) \cap R=\mathfrak{A}$, with $J$ is as in \autoref{Notation3}. 
Hence $\mathfrak{A}\subseteq \gradedcore(I)$ according to \autoref{mainOneDegree}. Next we search for special reductions that are needed to compute the graded core.

Computation with Macaulay2 \cite{GS} shows that 
$$g=z_{1,4}z_{2,3}z_{3,2}-z_{1,3}z_{2,4}z_{3,2}-z_{1,4}z_{2,2}z_{3,3}+z_{1,2}z_{2,4}z_{3,3}+z_{1,3}z_{2,2}z_{3,4}-z_{1,2}z_{2,3}z_{3,4}.$$
Consider 
$$\underline{\lambda}_0=
\begin{pmatrix}
1&0&0&0\\
0&1&0&0\\
0&0&1&1
\end{pmatrix}\quad \text{and}\quad 
\underline{\lambda}_1=
\begin{pmatrix}
1&0&0&0\\
0&1&1&0\\
0&0&0&1
\end{pmatrix}.   
$$
Then $\pi_{\undl{\ll}_0}(g)=\pi_{\undl{\ll}_1}(g)=0$,  $\pi_{\undl{\ll}_0}(h)\neq 0$,  and $\pi_{\undl{\ll}_1}(h)\neq 0$. Therefore the ideals 
 $$J_{\undl{\ll}_0}=(x_1^3, x_1^2x_2, x_1x_3^2+x_3^3) \quad\text{and}\quad J_{\undl{\ll}_1}=(x_1^3, x_1^2x_2+x_1x_3^2,x_3^3)$$ 
 are special reductions of $I$. Thus 
 $$\mathfrak{A}\subseteq \gradedcore(I)\subseteq I\,\fm^2\cap \mono(J_{\undl{\ll}_0}\cap J_{\undl{\ll}_1}) =\mathfrak{A}, $$
 where $\mono(-)$ is computed using the command {\tt monomialSubideal} in Macaulay2.  
 We conclude that $\gradedcore(I)=\mathfrak{A}$, which verifies \autoref{conjectureColon}. 
\end{example}

The following theorem gives an instance where the graded core equals the mono of a general minimal reduction without any residual conditions.

\begin{theorem}\label{thm_constant_c}
Using \autoref{Notation4},  if $\sqrt{C_i}=\mathcal{C}$ for every $i$, then  $\gradedcore(I)=
\mono(J_{\undl{\lambda}})$ for general $\undl{\lambda}$. 
\end{theorem}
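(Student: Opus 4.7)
The plan is to prove the two inclusions $\gradedcore(I) \subseteq \mono(J_{\undl{\lambda}})$ and $\gradedcore(I) \supseteq \mono(J_{\undl{\lambda}})$ separately for general $\undl{\lambda}$. The first inclusion is immediate from \autoref{maincor} together with \autoref{GC}, noting that with $f=0$ the ideal $K := J_{\undl{\lambda}}$ is a general reduction of $I$ in the sense of \autoref{r general red}. So the real task is to show that every generator of $\mono(J_{\undl{\lambda}})$ lies in $\gradedcore(I)$ for general $\undl{\lambda}$.

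By \autoref{mainOneDegree}, it is enough to prove the inclusion
\[
\mono(J_{\undl{\lambda}}) \subseteq \bigl(\mono(J):_{R[\undl{z}]}(h)^\infty\bigr) \cap R
\]
for general $\undl{\lambda}$. First, I would use \autoref{indep}(b) to identify
\[
\mono(J_{\undl{\lambda}}) = \pi_{\undl{\lambda}}(\mono(J)) = \sum_i \pi_{\undl{\lambda}}(C_i)\,\fx^{\fv_i}.
\]
Each $C_i$ is a nonzero ideal of $\kk[\undl{z}]$, and since $\kk$ is infinite, a dense open condition on $\undl{\lambda}$ ensures that $\pi_{\undl{\lambda}}(C_i) = \kk$ for every $i$. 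Thus for general $\undl{\lambda}$ one has $\mono(J_{\undl{\lambda}}) = (\fx^{\fv_1}, \ldots, \fx^{\fv_r})$, and it suffices to verify that $\fx^{\fv_i} \in \mono(J):_{R[\undl{z}]}(h)^\infty$ for each $i$.

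This is the step where the hypothesis enters. By \autoref{ContentIsPrincipal} the content ideal is $\mathcal{C} = (h)$, so the assumption $\sqrt{C_i} = \mathcal{C}$ becomes $\sqrt{C_i} = (h)$ for every $i$. Hence there exists an integer $N$ with $h^N \in C_i$ for all $i$, whence $h^N \, \fx^{\fv_i} \in C_i \, \fx^{\fv_i} \subseteq \mono(J)$, giving $\fx^{\fv_i} \in \mono(J):_{R[\undl{z}]}(h)^\infty$ as required. The proof is thus a direct stringing together of \autoref{maincor}, \autoref{GC}, \autoref{indep}, \autoref{mainOneDegree}, and \autoref{ContentIsPrincipal}, and there is no serious obstacle: the only subtlety is confirming that the finitely many dense open conditions on $\undl{\lambda}$ (that $J_{\undl{\lambda}}$ is a reduction, that $\mono(J_{\undl{\lambda}}) = \pi_{\undl{\lambda}}(\mono(J))$, and that each $\pi_{\undl{\lambda}}(C_i) \neq 0$) can be imposed simultaneously, which is automatic.
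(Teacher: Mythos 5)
Your proof is correct and follows essentially the same route as the paper's: both sandwich $\gradedcore(I)$ between $\bigl(\mono(J):_{R[\undl{z}]}(h)^\infty\bigr)\cap R$ (via \autoref{mainOneDegree}) and $\mono(J_{\undl{\lambda}})$ (via \autoref{maincor}/\autoref{GC}), and then use the hypothesis $\sqrt{C_i}=(h)$ (with \autoref{ContentIsPrincipal}) to close the gap. The only cosmetic difference is that the paper cites \autoref{discus_one_degree} for the identification of $\bigl(\mono(J):(h)^\infty\bigr)\cap R$ with $(\mathcal N)$, whereas you re-derive the needed inclusion directly by choosing $N$ with $h^N\in C_i$ for all $i$; this is a fair unpacking of a step the paper leaves to the reader.
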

\begin{proof}
  The assumption implies that $\mathcal{M}=\mathcal{N}$ for  
  $\mathcal{M}$ and $\mathcal{N}$ as in \autoref{discus_one_degree}. Now we use the inclusions
$$
(\mathcal{N})=
\left(\mono(J):_{R[\undl{z}]}(h)^\infty\right) \cap R 
\subseteq \gradedcore(I)
\subseteq \mono(J_{\undl{\ll}})
= (\mathcal{M})
$$
that follow from \autoref{discus_one_degree},  \autoref{mainOneDegree}, 
and \autoref{Notation4}.
\end{proof}

\smallskip

\section{The core of lex-segment ideals}\label{lexsegments}

In this section we investigate the core of a special class of monomial ideals, lex-segment ideals. 
Throughout $R$ denotes  a polynomial ring $\kk[x_1,\ldots,x_d]$ over a field $\kk$ and $I$ denotes a homogeneous ideal. 

We begin by  
recalling some basic facts about {\it lex-segment} ideals; for a thorough treatment see \cite{MS}  or \cite{HH}. Let $H_{M}$ denote the Hilbert function of a finitely generated graded $R$-module $M$.  Write $R=\bigoplus_{i\gs 0} R_i$ and consider the lexicographic monomial order with $x_1>x_2>\cdots > x_d$.   
Let $L_i$ be the subspace of $R_i$ generated by the largest $H_{I}(i)$ monomials and set $L=\bigoplus_{i\gs 0}L_i.$ 
 The vector space $L$  is an ideal, 
  and any ideal constructed this way is called a {\it lex-segment ideal}. 
Lex-segment ideals are {\it strongly stable}, i.e., if $u\in L$ is a monomial and $x_j|u$ for some $j$, then $x_i\frac{u}{xj}\in L$ for every $i<j$. However, there are strongly stable ideals that are not lex-segment.

The purpose of this section is to tackle the following conjecture. 

\begin{conjecture}\label{conjectureLex} 
Let $R=\kk[x_1,\ldots,x_d]$ be a polynomial ring over a field $\kk$ of characteristic zero and $\fm=(x_1,\ldots, x_d)$ the homogeneous maximal ideal of $R$. If $L$ is a lex-segment ideal  of height  $g \gs 2$ generated in degree $\delta\gs 2$, then $$\core(L)=L\, \fm^{d(\delta-2)+g-\delta+1}.$$  
\end{conjecture}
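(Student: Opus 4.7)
The plan is to combine the general results of Sections 2--4 with the specific combinatorics of lex-segments. First I would verify, using \autoref{ArtinN}, that the lex-segment ideal $L$ satisfies $G_{d}$ and that $L_{\fm}$ is weakly $(d-2)$-residually $S_{2}$. Then \autoref{remCo} gives $\core(L)=\gradedcore(L)$ and exhibits it as a finite intersection of ideals $J_{1},\ldots,J_{n}$, each generated by $d$ general $\kk$-linear combinations of the minimal monomial generators of $L$; moreover, \autoref{MainwithGdS2} identifies $\core(L)$ with $\mono(K)$ for any sufficiently general such reduction $K$. Writing $N:=d(\delta-2)+g-\delta+1$, the conjecture splits into the two inclusions $L\fm^{N}\subseteq\core(L)$ and $\core(L)\subseteq L\fm^{N}$, which behave very differently.

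For the containment $L\fm^{N}\subseteq\core(L)$, I would invoke the Polini--Ulrich colon formula, which under the present residual hypotheses asserts $\core(L)=J^{n+1}:_{R}L^{n}$ for a general minimal reduction $J$ and $n\gg 0$. The desired containment then reduces, after using $L^{n+1}=J^{n+1-r}L^{r}$ for $n\geq r=r_{J}(L)$, to the cleaner assertion $\fm^{N}L^{r}\subseteq J^{r}$. At this point I would exploit two special features: that a general minimal reduction $J$ is a complete intersection of $d$ forms of degree $\delta$, so its Koszul socle lies in degree $d(\delta-1)$ and Macaulay's theorem yields $\fm^{d(\delta-1)+1}\subseteq J$; and that the reduction number $r_{J}(L)$ of a lex-segment generated in a single degree is controlled by $d$, $g$, and $\delta$ through the stable structure of $L$. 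Combining these two bounds should deliver the containment claimed in \autoref{mainCore}, while sanity-checking against the classical case $L=\fm^{\delta}$, where $N=(d-1)(\delta-1)$ and the formula recovers the well-known $\core(\fm^{\delta})=\fm^{d(\delta-1)+1}$.

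The reverse inclusion $\core(L)\subseteq L\fm^{N}$ is the heart of the matter, and I would attack it through $\core(L)=\mono(K)$. Writing $K=J_{\undl{\ll}}$ as a specialisation of $J_{\ell,\undl{z}}$ as in \autoref{Notation3}, by \autoref{ContentIsPrincipal} the ideal $\mono(J_{\ell,\undl{z}})$ decomposes as $\sum_{i}C_{i}\fx^{\fv_{i}}$ with content ideal $(h)$, and $\mono(K)$ is spanned by those $\fx^{\fv_{i}}$ whose coefficient ideals have radical equal to $(h)$---the set $\mathcal{N}$ of \autoref{discus_one_degree}. The task then becomes to prove $\mathcal{N}\subseteq L\fm^{N}$, exploiting the extremely rigid shape of a lex-segment: the minimal monomial generators form an initial segment for the lex order, and any candidate monomial in $\mono(K)$ must arise from that initial segment through multiplication by enough variables to annihilate all generic cancellations. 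The main obstacle, and the reason the full conjecture has resisted proof, is the precise identification of $\mathcal{N}$ for an arbitrary lex-segment: although the content ideal is principal by \autoref{ContentIsPrincipal}, the individual coefficient ideals $C_{i}$ are subtle, and I would expect this inclusion to be verified only in the families enumerated in \autoref{lex conj summary}, leaving the general statement as the remaining open piece of the conjecture.
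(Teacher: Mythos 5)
Your proposal correctly recognizes that the statement is a conjecture rather than a theorem, and your overall framework (reduce via \autoref{ArtinN}, \autoref{remCo}, \autoref{MainwithGdS2} to working with $\mono(K)$, then split into two inclusions) matches the architecture of Sections 5--6. But your argument for the inclusion $L\,\fm^{N}\subseteq\core(L)$ has a genuine gap that makes it fail for $g<d$.

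You reduce the forward inclusion, via the colon formula $\core(L)=J^{n+1}:L^{n}$, to showing $\fm^{N}L^{r}\subseteq J^{r}$, and then invoke the socle degree of the Koszul complex of $d$ forms of degree $\delta$ to get $\fm^{d(\delta-1)+1}\subseteq J$. This Macaulay-type bound is only available when $J$ is $\fm$-primary, i.e.\ when $g=d$ and $L=\fm^{\delta}$. For $g<d$, a general minimal reduction $J$ has height $g<d$ and no power of $\fm$ lies in $J$, so the bound you need simply does not hold; the containment $\fm^{N}L^{r}\subseteq J^{r}$ still must be proved, but by an argument that does not collapse to a socle estimate. You also assert without proof that $r_{J}(L)$ is ``controlled by $d$, $g$, and $\delta$''; without an explicit bound this reduction is not yet usable. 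The paper's proof of the corresponding inclusion (\autoref{mainCore}) takes a different route: it expresses $\fm^{N}\subseteq(\underline a):L$ as a bound $\indeg(\omega_{R/((\underline a):L)})\geq -(d(\delta-2)+g-\delta)$ on the initial degree of the canonical module of a $(d-1)$-residual intersection, and proves this by induction on $\sigma(L)=d-g$, passing to the sub-lex-segment $L\cap\kk[x_1,\dots,x_{d-1}]$ and using \autoref{propCan}, \autoref{equivs}, and the saturation $L^{\mathrm{sat}}$. The base case $\sigma(L)=0$ (i.e.\ $g=d$) is exactly where your Koszul socle argument applies; the induction is what handles the non-Artinian cases your approach does not cover. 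Your discussion of the reverse inclusion via $\mono(K)$, \autoref{ContentIsPrincipal}, and \autoref{discus_one_degree} correctly identifies the obstruction (understanding the individual coefficient ideals $C_i$) and is consistent with the paper's treatment, which establishes the equality only in the families listed in \autoref{lex conj summary}.
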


\begin{remark}\label{lex conj summary}
We have strong evidence supporting this conjecture. The case $\delta=2$ was shown in \cite[Theorem 5.1]{SmCore}, and 
the  case $g=d$, i.e., $I$ is a  power of $\fm$, was shown in   
\cite[Proposition 4.2]{CPU0}. The case $d\ls 3$  is \autoref{d=3}. 
 Moreover, a large number of cases were  verified with Macaulay2 \cite{GS}.  In fact, we developed an algorithm based on \autoref{MainwithGdS2}, \autoref{ArtinN}, and \autoref{enough} that tested the conjecture for every lex-segment ideal in the following cases: 
 $d=4$ and $\delta \ls 12$; $d=5$ and $\delta \ls 5$; $d=6$ and $\delta \ls 3$. Furthermore, in \autoref{mainCore}, \autoref{firstAndLast}, and \autoref{d=g+1} we obtain other partial results towards the conjecture.
\end{remark}

For a monomial ideal $I\subset R=\kk[x_1,\ldots,x_d]$, we denote by $\Gamma(I)$ the set of monomials in $I$ and by $G(I)$ the minimal set of monomial generators $\{\fx^{\fv_1},\ldots, \fx^{\fv_u}\}$ of $I$. 
For a set of monomials $W$ in $R$, we denote by $\log (W)\subseteq \NN^d$ the set of exponents of the monomials in $W$. For $\fw=(w_1,\ldots, w_d)\in \NN^d$, we define $\min(\fw)$ and $\max(\fw)$ to be the smallest and largest $i$ such that $w_i\neq 0$, respectively; we also set $|\fw|=\sum_i w_i$.

 The following technical results are  needed in the proofs of the main results of this section.   The first one gives a characterization of the analytic spread and height of strongly stable ideals.

\begin{proposition}\label{analyticHeightSS} 	Let $R=\kk[x_1,\ldots,x_d]$ be a polynomial ring over a field $\kk$ and 
 $I$  a strongly stable ideal. 
\begin{enumerate}[$($a$)$]
\item $\height(I)=\max\{ \min(\fv)\,|\, \fv\in \log(G(I))\}.$
\item   If in addition  $I$ is generated in a single degree then  $$\ell(I)=\max\{ \max(\fv)\,|\, \fv\in \log(G(I))\}.$$
\end{enumerate}
\end{proposition}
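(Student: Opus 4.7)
The plan is to bound $\height(I)$ and $\ell(I)$ from above by elementary inclusions and from below by exploiting strong stability to produce many elements of $I$ (respectively, algebraically independent elements in $\Quot(\F(I))$).

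For (a), set $s = \max\{\min(\fv) \mid \fv \in \log(G(I))\}$. Every minimal generator $\fx^\fv$ is divisible by $x_{\min(\fv)}\in(x_1,\ldots,x_s)$, so $I\subseteq(x_1,\ldots,x_s)$ and $\height(I)\ls s$. For the reverse inequality, I would fix a generator $\fx^{\fv_0}$ with $\min(\fv_0)=s$ and, for each $j\in\{1,\ldots,s\}$, iteratively apply strong stability to push each variable $x_k$ (with $k>j$ in the support of the current monomial) down to $x_j$; all such swaps are legal since $j\ls s\ls k$. This eventually produces $x_j^{|\fv_0|}\in I$ for every $j\in\{1,\ldots,s\}$, so $(x_1,\ldots,x_s)\subseteq \sqrt{I}$ and $\height(I)\gs s$.

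For (b), set $t=\max\{\max(\fv)\mid \fv\in\log(G(I))\}$. Since $I$ is generated in the single degree $\delta$, one has the standard isomorphism $\F(I)\cong A:=\kk[f_1,\ldots,f_u]\subseteq R$ (the point being that $\fm I^n$ lives in degrees $>n\delta$, so $\F(I)_n\cong (I^n)_{n\delta}$ is spanned by the products $f_{i_1}\cdots f_{i_n}$). Because each $f_j\in\kk[x_1,\ldots,x_t]$, we have $A\subseteq \kk[x_1,\ldots,x_t]$ and $\ell(I)=\dim A\ls t$. For the reverse I would fix a generator $\fx^{\fv_0}$ with $\max(\fv_0)=t$, so that $x_t\mid \fx^{\fv_0}$, and use strong stability to produce the degree-$\delta$ monomials $m_i:=x_i\,\fx^{\fv_0}/x_t\in I$ for each $i<t$, together with $m_t:=\fx^{\fv_0}$. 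Each $m_i$ is one of the generators $f_j$ and hence lies in $A$, so in $\Quot(A)$ we have $x_i/x_t=m_i/m_t$ for every $i=1,\ldots,t-1$; writing $m_t=\prod_j x_j^{(\fv_0)_j}$ with $\sum_j(\fv_0)_j=\delta$ also yields $x_t^\delta = m_t/\prod_j (x_j/x_t)^{(\fv_0)_j}\in\Quot(A)$.

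The proof of (b) concludes by noting that the $t$ elements $x_1/x_t,\ldots,x_{t-1}/x_t,x_t^\delta\in\Quot(A)$ are algebraically independent over $\kk$: each $x_i$ ($i\ls t$) is algebraic over the subfield $\kk(x_1/x_t,\ldots,x_{t-1}/x_t,x_t^\delta)$ since $x_t=(x_t^\delta)^{1/\delta}$ and $x_i=(x_i/x_t)\,x_t$, forcing that subfield to have transcendence degree $t$; hence $\dim A=\trdeg_\kk\Quot(A)\gs t$. The delicate part of the argument is pinpointing these $t$ algebraically independent elements from a single application of the strong-stability swap; the upper bounds and the identification $\F(I)\cong A$ are routine.
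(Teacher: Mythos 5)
Your proof is correct and follows the same strategy as the paper: elementary inclusions for the upper bounds, and lower bounds obtained by using strong stability to manufacture monomials that force the height (in (a)) or the transcendence degree of $\F(I)\cong\kk[I_\delta]$ (in (b)). The only notable difference is in (b), where the paper pushes all variables toward $x_1$ to produce the cleaner subalgebra $\kk[x_1^\delta, x_1^{\delta-1}x_2,\ldots,x_1^{\delta-1}x_s]\subseteq\kk[I_\delta]$, whereas you keep the chosen generator $\fx^{\fv_0}$ and work with the ratios $x_i/x_t$ together with $x_t^\delta$ in $\Quot(A)$ -- slightly more bookkeeping, but an equivalent transcendence-degree computation.
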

\begin{proof}
To prove part (a) let $r=\max\{ \min(\fv)\,|\, \fv\in \log(G(I))\}$. It is clear that $I\subseteq (x_1,\ldots,x_r)$ and so $\height(I)\ls r$. On the other hand, let $\fp\in V(I)$ and let $\fv\in \log(G(I))$. If $i\ls\min(\fv)$, then $x_i^{|\fv|}\in I$ since $I$ is strongly stable. Therefore $x_i\in \fp$ for every $1\ls i\ls r$, and the conclusion follows. 

We now prove part (b).  Let $s=\max\{ \max(\fv)\,|\, \fv\in \log (G(I))\}$. Notice that $G(I)$ consists of monomials in the variables $x_1,\ldots, x_s$. Hence $\ell(I)\ls s$. 
  On the other hand, since $I$ is strongly stable and generated in one degree, say $\delta$, it follows that $x_1^{\delta-1}(x_1,\ldots, x_s)\subseteq I$. Therefore  $\ell(I)={\rm {trdeg}}_\kk(\kk[I_\delta]) \gs {\rm{trdeg}}_\kk\left(\kk[x_1^{\delta-1}x_1, \ldots, x_1^{\delta-1}x_s]\right)=s$. 
\end{proof}

\begin{remark}\label{usefulEx}
If  $L$ is a lex-segment ideal  of height  $g \gs 2$ generated in degree $\delta\gs 2$, then $\ell(L)=d$ and the minimal number of generators of $L$ is at least $d+1$. Indeed, in this case $x_2^\delta\in L$ and then $x_1^{\delta -1}(x_1,\ldots, x_d)\subset L$. The conclusion about $\ell(L)$ now follows from  \autoref{analyticHeightSS} (b).
\end{remark}

The following  proposition 
allows us to use the results of \cite{CPU} and \cite{PU1} for the computation of cores of lex-segment ideals. Some of the techniques in the proof  originate from \cite[Theorem 3.3]{SmCore}. Recall that an ideal $I$ of height $g$ is said to satisfy  $AN_{s}^{-}$, where $s$ is an integer, if for every $g \leq i \leq s$
and every   geometric $i$-residual intersection $K$ of $I$ the ring $R/K$ is Cohen-Macaulay. Notice that if $I_{\mathfrak{p}}$ satisfies $AN_{s}^{-}$ for every $\mathfrak p\in V(I)$, then $I$ satisfies $AN_{s}^{-}$.

Let $a_1, \ldots, a_n$ be homogeneous elements of $R$ and  $I$  the ideal they generate. Write $H_i$ for the  $i^{\rm{th}}$ Koszul homology of  $a_1, \ldots, a_n$. The ideal $I$ satisfies {\it{sliding depth}} if ${\rm depth} (H_i) \geq d-n+i$ for every $i$, where  we use the convention $\depth(0)=\infty$ (see \cite{HVV}).

\begin{proposition}\label{ArtinN} Let $R=\kk[x_1,\ldots,x_d]$ be a polynomial ring over a field $\kk$, $\fm=(x_1,\ldots, x_d)$ the maximal homogeneous  ideal of $R$, and $L$ a lex-segment ideal. 
Then  $L^{\rm sat}=L:\fm^{\infty}$ satisfies $G_{\infty}$, sliding depth, and  $AN_{d-1}^{-}$. Moreover, $L$ satisfies $G_{d}$ and $AN_{d-1}^{-}$.  
\end{proposition}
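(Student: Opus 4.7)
The plan is to reduce the statement about $L$ to one in one fewer variable via the saturation, then induct on $d$. Since $L$ is strongly stable and $L^{\rm sat}=L:x_d^{\infty}$, no minimal monomial generator of $L^{\rm sat}$ can be divisible by $x_d$: otherwise strong stability would produce a strictly smaller monomial still in $L^{\rm sat}$, contradicting minimality. Hence $L^{\rm sat}=JR$ for a lex-segment ideal $J$ in $R':=\kk[x_1,\ldots,x_{d-1}]$.

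I then induct on $d$; the case $d=1$ is immediate since $L$ is then principal and $L^{\rm sat}$ equals $0$ or $R$. For the inductive step, the inductive hypothesis combined with the structural constraints on $J$ (as the $R'$-part of the saturation of a lex-segment ideal in $d$ variables) yields $G_{\infty}$ and sliding depth for $J$ in $R'$. These transfer to $L^{\rm sat}=JR$ using that $R=R'[x_d]$ is a faithfully flat polynomial extension in which $x_d$ is a regular element. For $G_{\infty}$: a prime $\fp$ of $R$ containing $L^{\rm sat}$ contracts to $\fq=\fp\cap R'$ containing $J$, and $\mu(L^{\rm sat}_{\fp})=\mu(J_{\fq})\ls\height(\fq)\ls\height(\fp)$. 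For sliding depth: the Koszul complex of the generators of $J$ viewed in $R$ is obtained from the one in $R'$ by tensoring with $R$, so the depth of each Koszul homology module shifts up by one, preserving the sliding depth inequality. Then $AN_{d-1}^{-}$ for $L^{\rm sat}$ follows from $G_{\infty}$ combined with sliding depth, via \cite[Theorem 3.1]{H2} and \cite[Theorem 3.3]{HVV}.

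To pass from $L^{\rm sat}$ to $L$, observe that $L_{\fp}=L^{\rm sat}_{\fp}$ for every non-maximal prime $\fp$. Thus $G_d$ for $L$ is immediate, as $G_d$ only probes primes of height at most $d-1$. For $AN_{d-1}^{-}$, take a geometric $i$-residual intersection $K=\fa:L$ with $g\ls i\ls d-1$ and set $K':=\fa:L^{\rm sat}\subseteq K$. A direct check shows $K'$ is itself a geometric $i$-residual intersection of $L^{\rm sat}$: outside $\fm$ the pairs $(K,L+K)$ and $(K',L^{\rm sat}+K')$ agree, while $\height(\fm)=d>i$ handles $\fm$. Hence $R/K'$ is Cohen-Macaulay by the previous step. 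Since $K_{\fp}=K'_{\fp}$ for all $\fp\ne\fm$, the module $K/K'$ is supported only at $\fm$; when $\dim(R/K')\gs 1$ the Cohen-Macaulay property forces $\fm\notin\Ass(R/K')$, so $K=K'$ and $R/K$ is Cohen-Macaulay, while if $\dim(R/K')=0$ the quotient $R/K$ is Artinian and hence Cohen-Macaulay as well. The main obstacle is the inductive step: establishing that $J$ itself satisfies $G_{\infty}$ and sliding depth in $R'$ requires exploiting the specific combinatorial structure of those lex-segment ideals that arise as $L^{\rm sat}\cap R'$, rather than merely invoking the inductive statement for $J$ as an abstract lex-segment ideal in $R'$.
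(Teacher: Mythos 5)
There is a genuine gap here, and you have correctly identified it yourself at the end of the proposal. Your reduction $L^{\rm sat}=JR$ with $J:=L^{\rm sat}\cap R'$ a lex-segment ideal of $R'=\kk[x_1,\ldots,x_{d-1}]$ is fine (and the transfer of $G_{\infty}$, sliding depth and $AN$ from $R'$ to $R$, as well as the passage from $L^{\rm sat}$ to $L$ at the end, are both sound; the last is a reasonable substitute for the paper's appeal to \cite[Remark 1.12]{U1}). The problem is that induction on $d$ does not close. The inductive hypothesis, applied to the lex-segment ideal $J\subset R'$, gives $G_\infty$ and sliding depth for $J:(x_1,\ldots,x_{d-1})^\infty$ and only $G_{d-1}$ and $AN_{d-2}^-$ for $J$ itself. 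But you need $G_\infty$ and sliding depth for $J$, not for its saturation in $R'$, and $J$ is typically not saturated in $R'$. For instance, with $L=(x_1^2,x_1x_2,x_1x_3,x_2^2)$ in $\kk[x_1,x_2,x_3]$ one gets $L^{\rm sat}=(x_1,x_2^2)$ and $J=(x_1,x_2^2)\subset\kk[x_1,x_2]$, which is $(x_1,x_2)$-primary, so $J^{\rm sat}_{R'}=R'$ and the inductive statement is vacuous for the ideal you actually care about. So the phrase ``the inductive hypothesis combined with the structural constraints on $J$'' is precisely where the proof is missing content.

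The paper avoids this by running a different induction: on $\delta$, the top degree of a minimal generator of $L$, rather than on $d$. Using strong stability they write $L=I+x_gL'$ with $g=\height(L)$, $I\subseteq(x_1,\ldots,x_{g-1})$ and $L'\cap S$ a lex-segment ideal of $S=\kk[x_g,\ldots,x_d]$ whose generators live in degrees strictly below $\delta$; the $S$-saturation of $L'\cap S$ is what feeds the inductive hypothesis, and the saturation $L^{\rm sat}$ is recovered as $(x_1,\ldots,x_{g-1})+x_g\bigl((L'\cap S):_S(x_g,\ldots,x_d)^\infty\bigr)R$. The descent of sliding depth along the regular sequence $x_1,\ldots,x_{g-1}$ and the regular element $x_g$ is then handled by \cite[Lemma 3.5]{HVV}, and $AN_{d-1}^-$ follows from \cite[Theorem 3.3]{HVV}. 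The essential point your route lacks is a way to drop the degree so that the inductive hypothesis actually applies to the object you produce; if you wanted to salvage your approach you would need an independent argument that $L^{\rm sat}\cap R'$ has $G_\infty$ and sliding depth, which is essentially the whole statement in one fewer variable, restated without the saturation.
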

\begin{proof}
We may assume that $L\not= 0$ and $L\not= R$. Write $g=\height(L)$. We claim  that $L^{\rm sat}$ satisfies $G_\infty$ and sliding depth. 
Let $\delta$ be the largest degree of a monomial generator of $L$. We use induction on $\delta$. If $\delta=1$, then $L=(x_1,\ldots, x_g)$, and the claim holds trivially.  
Assume $\delta\gs 2$ and  the claim holds for every lex-segment ideal generated in degrees smaller than $\delta$.  We may assume $g<d$, as otherwise $L^{\rm sat}=R$. Set $S:=\kk[x_g,\ldots, x_d]$.  
By \autoref{analyticHeightSS} (a)  we can write $L=I+x_gL'$ for some ideals $I$ and $L'$ such that  $I\subseteq (x_1,\ldots, x_{g-1})$, the generators of $L'$ involve only the variables $x_g,\ldots, x_d$,  and $L'\cap S$ is a lex-segment ideal in $S$  generated in degrees smaller than $\delta$.  Clearly,  $x_g^\delta\in L$ and then $(x_1,\ldots, x_{g-1})\fm^{\delta-1}\subseteq I$. We conclude that
$$(x_1,\ldots, x_{g-1})\fm^{\delta-1}+x_gL'\subseteq L\subseteq (x_1,\ldots, x_{g-1})+x_gL'.$$
 Therefore 
\begin{align}
L^{\rm sat}&=(x_1,\ldots, x_{g-1})+\big((x_g L'\cap S):_S(x_g,\ldots, x_{d})^{\infty}\big)R \nonumber\\
&=(x_1,\ldots, x_{g-1})+x_g\big( (L'\cap S):_S(x_g,\ldots, x_{d})^{\infty}\big)R \text{\,\,\, since  $g<d$}.\nonumber
\end{align} 

It follows from the induction hypothesis that $L^{\rm sat}$ satisfies $G_\infty$.  Now, since $x_1,\ldots, x_{g-1}$ is a regular sequence and the image of $L^{\rm sat}$ in $S\cong R/(x_1,\ldots, x_{g-1})$ is  $x_g\big( (L' \cap S):_S(x_g,\ldots, x_{d})^{\infty}\big)$, by \cite[Lemma 3.5]{HVV} the ideal $L^{\rm sat}$ satisfies sliding depth if and only if $x_g\big( (L'\cap S):_S(x_g,\ldots, x_{d})^{\infty}\big)$  satisfies  sliding depth. Since $x_g$ is a regular element, the latter is equivalent to  $\left((L'\cap S):_S(x_g,\ldots, x_{d})^{\infty}\right)$ satisfying sliding depth. The conclusion now follows from the induction hypothesis.

Now for every $\fp \in V(L^{\rm sat})$, the ideal $L^{\rm sat}_{\fp}$ satisfies $G_{\infty}$ and sliding depth. It follows 
from \cite[Theorem~3.3]{HVV} that this ideal is $AN_{d-1}^{-}$. Hence $L^{\rm sat}$ satisfies  $AN_{d-1}^{-}$. 

Notice that the ideals $L$ and $L^{\rm sat}$ are equal locally at every prime ideal $\fp \neq \fm$. Hence the property $G_d$ passes from $L^{\rm sat}$ to $L$. 
According to \cite[Remark 1.12]{U1} the property $AN_{d-1}^{-}$ passes from $L^{\rm sat}_{\fp}$ to $L_{\fp}$ because 
the two ideals coincide locally in codimension $d-1$. Hence $L$ satisfies $AN_s^{-}$.
\end{proof}

Let $R$ be a Cohen-Macaulay $^*$local ring with a graded canonical module $\omega_R$ (cf. \cite[Section 3.6]{BH}). For a graded $R$-module $M$, we denote by  $M^{\vee}=\Hom_R(M,\,\omega_R)$ the {\it $\omega$-dual} of $M$. The following proposition and its proof are essentially contained in \cite[Lemma 2.1]{U1} (see also \cite[Lemma 4.9]{CEU}), we include it here in its graded version.

\begin{proposition}\label{propCan}
Let $R$ be a Cohen-Macaulay $^*$local ring with a graded canonical module $\omega_R$. Let $I$ be a homogeneous ideal. Let $x\in I$ be a homogeneous regular element and $J=(x): I$. Then $$\omega_{R/J}\cong \big((I\omega_{R})^{\vee \vee}/x\omega_{R}\big)(\deg(x)).$$
\end{proposition}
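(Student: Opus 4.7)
The strategy is to use $\omega_R$-duality to identify both sides. I would first compute $(I\omega_R)^{\vee}$ directly. The submodule $x\omega_R\subseteq I\omega_R$ is isomorphic to $\omega_R(-\deg x)$ via $\xi\mapsto x\xi$, and restriction of homomorphisms gives a natural map
\[
(I\omega_R)^{\vee}\longrightarrow \Hom_R(x\omega_R,\omega_R)\cong R(\deg x),\qquad \phi\mapsto r\ \text{where}\ \phi(x\xi)=r\xi.
\]
This map is injective because $x$ is a nonzerodivisor on $\omega_R$: any $\phi$ vanishing on $x\omega_R$ satisfies $x\phi(a\xi)=a\phi(x\xi)=0$ for every $a\in I$ and $\xi\in\omega_R$, forcing $\phi=0$. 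The image is precisely $J(\deg x)$, because extending the rule $x\xi\mapsto r\xi$ to all of $I\omega_R$ requires defining $\phi(a\xi)$ as the unique $\eta\in\omega_R$ with $x\eta=ar\xi$; such $\eta$ exists exactly when $ar\in(x)$ for every $a\in I$, that is, $r\in(x):I=J$. Hence $(I\omega_R)^{\vee}\cong J(\deg x)$, and dualizing once more yields $(I\omega_R)^{\vee\vee}\cong J^{\vee}(-\deg x)$.

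Next, I would track how the canonical injection $I\omega_R\hookrightarrow (I\omega_R)^{\vee\vee}$ identifies the submodule $x\omega_R$ inside $J^{\vee}(-\deg x)$. Evaluation yields $\mathrm{ev}_{x\xi}(\phi_r)=\phi_r(x\xi)=r\xi$; under the identification above $\mathrm{ev}_{x\xi}$ corresponds to the element $(r\mapsto r\xi)\in J^{\vee}$, which is precisely the image of $\xi$ under the natural map $\omega_R\to J^{\vee}$ induced by the inclusion $J\hookrightarrow R$. Thus the submodule $x\omega_R\hookrightarrow (I\omega_R)^{\vee\vee}$ matches the shifted inclusion $\omega_R(-\deg x)\hookrightarrow J^{\vee}(-\deg x)$.

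Finally, applying $\Hom_R(-,\omega_R)$ to $0\to J\to R\to R/J\to 0$ produces
\[
0\to \Hom_R(R/J,\omega_R)\to \omega_R\to J^{\vee}\to \Ext^{1}_R(R/J,\omega_R)\to 0.
\]
The leftmost term is $0:_{\omega_R}J$, which vanishes since $x\in J$ is a nonzerodivisor on $\omega_R$, and the rightmost $\Ext$ is $\omega_{R/J}$ in the present setting. Combining this with the identification of $x\omega_R$ inside the double dual,
\[
(I\omega_R)^{\vee\vee}/x\omega_R \,\cong\, (J^{\vee}/\omega_R)(-\deg x)\,\cong\,\omega_{R/J}(-\deg x),
\]
and shifting by $\deg x$ gives the stated isomorphism. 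The main technical obstacle is the graded bookkeeping: the principal-ideal isomorphism $xR\cong R(-\deg x)$ and the standard Hom-shift $\Hom(M(d),N)=\Hom(M,N)(-d)$ must be combined through both dualizations so that the final twist is exactly $(\deg x)$ as claimed.
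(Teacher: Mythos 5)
Your proof is correct and follows essentially the same route as the paper: both establish the identification $(I\omega_R)^{\vee}\cong J(\deg x)$, dualize once more, and then apply $\Hom_R(-,\omega_R)$ to $0\to J\to R\to R/J\to 0$ to extract $\omega_{R/J}$ as a quotient. The only real difference is how the first identification is derived --- the paper proceeds through the chain $J=x(R:_{\Quot(R)}I)\cong x\Hom_R(I,R)\cong x\Hom_R(I\omega_R,\omega_R)$ via tensor-hom adjunction and $\Hom_R(\omega_R,\omega_R)\cong R$, whereas you build the map directly by restricting functionals on $I\omega_R$ to $x\omega_R$ and characterizing the image --- but both routes rest on the same standard facts about $\omega_R$.
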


\begin{proof}
We may assume that $J\not=R$. There are homogeneous isomorphisms
\begin{flalign*}
 \ \ \ \ \ \  \ \ \ \ \ \ \ \ \ \  \ \ \ \ \ \ J    &= x \, (R:_{{\rm Quot}(R)}I) &&\\
		&\cong x\, \Hom_R\big(I,\,R\big)&\\
		&\cong  x\, \Hom_R\big(I,\,\Hom_R(\omega_R,\,\omega_R)\big)&&\\
		&\cong x\, \Hom_R\big(I\otimes_R\omega_R,\,\omega_R\big)&&\\ 
		&\cong x\, \Hom_R\big(I\omega_R,\,\omega_R\big), &\text{as } \Ker(I\otimes_R \omega_R \twoheadrightarrow I\omega_R) \text{ is torsion.} &
\end{flalign*}
We conclude that $J\cong  x(I\omega_R)^{\vee}$, and therefore 
\begin{equation}\label{eq_J}
J^\vee \cong\big(x(I\omega_R)^{\vee}\big)^{\vee}\cong x^{-1}(I\omega_R)^{\vee\vee}.
\end{equation} 
Dualizing the exact sequence 
\[
\begin{tikzcd}
0\arrow{r} & J\arrow{r} &R\arrow{r} &R/J\arrow{r}& 0\,,
\end{tikzcd}
\] 
into $\omega_R$, one obtains an exact sequence
\begin{equation}\label{sss_dual}
\begin{tikzcd}
	\Hom_R(R/J,\,\om_R) \arrow{r} & R^{\vee}  \arrow{r} & J^{\vee} \arrow{r}& \Ext_R^1(R/J,\,\om_R) \arrow{r}& 0\,.
\end{tikzcd}
\end{equation}
Since $\height(J)=1 \,$ we have $\Hom_R(R/J,\om_R)=0$ and $\Ext_R^1(R/J,\,\om_R) \cong \omega_{R/J}$. Thus  
\autoref{eq_J} and \autoref{sss_dual}  yield
\[
\begin{tikzcd}
 0 \arrow{r} & \om_R \arrow{r}{} & x^{-1}(I\omega_R)^{\vee\vee} \arrow{r} & \om_{R/J} \arrow{r} & 0\,.
\end{tikzcd}
\] 
Hence
 $$\om_{R/J}\cong \pars{x^{-1}(I\omega_R)^{\vee\vee}}/\om_R \cong \left((I\omega_{R})^{\vee \vee}/x\omega_{R}\right)\left(\deg(x)\right)\,,$$ as desired.
\end{proof}

\smallskip

For a graded module $M=\oplus_{i\in \ZZ}M_i$ we  denote by $\indeg(M)$ the {\it initial degree} of $M$, i.e., $\indeg(M)=\inf\{i\,|\, M_i\neq 0\}.$

\smallskip

\begin{lemma}\label{equivs}
Let $R$ be a standard graded Cohen-Macaulay ring over a field $\, \kk$ with  $\dim(R)=d$, $\omega_R$ the graded canonical module of $R$, and $I$ a homogeneous ideal. Assume that $I$ satisfies $G_{d-1}$ and is weakly $(d-2)$-residually $S_2$. Let $n$ and $\delta \gs 0$   be integers, and consider the following statements $:$
\begin{enumerate}[{\rm (i)}]
\item $\indeg \pars{\omega_{R/\pars{(a_1,\ldots,a_{d-1}): I}}}\gs -n \, $ for some $(d-1)$-residual intersection $$(a_1,\ldots,a_{d-1}):I$$  of $I$ such that each $a_i$ is homogeneous of degree $\delta$,
\item  $\indeg\pars{\omega_{R/\pars{(a_1,\ldots,a_{d-1}): I}}}\gs -n \, $ for every $(d-1)$-residual intersection  $$(a_1,\ldots,a_{d-1}):I$$   of $I$ such that each $a_i$ is homogeneous of degree $\delta$,
\item $\indeg\pars{\omega_{R/\pars{(a_1,\ldots,a_{d}): I}}}\gs -n \, $ for every $d$-residual intersection $$(a_1,\ldots,a_{d}):I$$  of $I$ such that each $a_i$ is homogeneous of degree $\delta$.
\end{enumerate}
Then \tn{(i)}  is equivalent to  \tn{(ii)}. Moreover, if $\, \indeg( I)\gs \delta$, then \tn{(ii)} implies  \tn{(iii)}.

\end{lemma}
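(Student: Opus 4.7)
The plan is to prove each implication using Proposition~\ref{propCan} (the formula for the canonical module of a $1$-residual intersection) as the central computational tool, reducing the analysis of higher residual intersections to a chain of $1$-residual intersections. The hypothesis $\indeg(I)\geq\delta$ will enter in exactly one spot, to create a shift of $\delta$ in a key inequality.

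For $(i)\Leftrightarrow(ii)$: Since $G_{d-1}$ guarantees the existence of at least one $(d-1)$-residual intersection with generators in $I_\delta$, the direction $(ii)\Rightarrow(i)$ is immediate. For $(i)\Rightarrow(ii)$, I would view any two such $(d-1)$-residual intersections $K_0=(a_1,\ldots,a_{d-1}):I$ and $K_1=(b_1,\ldots,b_{d-1}):I$ as specializations of the generic residual intersection defined over $R[\underline{z}]$ (as in \autoref{Notation}); by generic flatness together with the residual conditions $G_{d-1}$ and weakly $(d-2)$-residually $S_2$, the quotients $R/K_0$ and $R/K_1$ share the same Hilbert function, so in particular $\indeg(\omega_{R/K_0})=\indeg(\omega_{R/K_1})$.

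For $(ii)\Rightarrow(iii)$ under the hypothesis $\indeg(I)\geq\delta$: Given a $d$-residual intersection $K'=(a_1,\ldots,a_d):I$ with $\deg(a_i)=\delta$, I would first use $G_{d-1}$ to reorder the generators so that $K=(a_1,\ldots,a_{d-1}):I$ is a $(d-1)$-residual intersection; the uniformity from the previous step guarantees that $\overline{R}:=R/K$ is Cohen-Macaulay of dimension one with $\indeg(\omega_{\overline{R}})\geq -n$. Since $R/K'\cong \overline{R}/(\overline{a}_d\overline{R}:_{\overline{R}} I\overline{R})$, and (after possibly adjusting $a_d$ by an element of $K$, which does not alter $K'$) $\overline{a}_d$ is a regular element on $\overline{R}$, Proposition~\ref{propCan} applied in $\overline{R}$ yields
\[
\omega_{R/K'} \cong \bigl((I\overline{R}\cdot\omega_{\overline{R}})^{\vee\vee}\big/\overline{a}_d\,\omega_{\overline{R}}\bigr)(\delta).
\]
The decisive step is to bound $\indeg\bigl((I\overline{R}\,\omega_{\overline{R}})^{\vee\vee}\bigr)$ from below. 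The geometric residual intersection property $\height(I+K)\geq d$ forces $\omega_{\overline{R}}/I\overline{R}\,\omega_{\overline{R}}$ to be zero-dimensional; hence $I\overline{R}\,\omega_{\overline{R}}$ is a maximal Cohen-Macaulay submodule of $\omega_{\overline{R}}$ over the one-dimensional Cohen-Macaulay ring $\overline{R}$, and is therefore reflexive with respect to $\omega_{\overline{R}}$, giving $(I\overline{R}\,\omega_{\overline{R}})^{\vee\vee}=I\overline{R}\,\omega_{\overline{R}}$. Invoking $\indeg(I)\geq\delta$ and $(ii)$,
\[
\indeg\bigl((I\overline{R}\,\omega_{\overline{R}})^{\vee\vee}\bigr)=\indeg(I\overline{R}\,\omega_{\overline{R}})\geq \indeg(I)+\indeg(\omega_{\overline{R}})\geq \delta+(-n)=\delta-n.
\]
Since forming a quotient does not decrease the initial degree, and the shift by $\delta$ in the formula subtracts $\delta$ from the initial degree, I conclude $\indeg(\omega_{R/K'})\geq (\delta-n)-\delta=-n$, which is $(iii)$.

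The main obstacle lies in $(i)\Rightarrow(ii)$: under the relatively weak residual conditions stated, one must rigorously justify that $\indeg(\omega_{R/K})$ does not depend on the particular choice of generators of the $(d-1)$-residual intersection—a uniformity statement that is not completely evident from the hypotheses. The reflexivity argument underpinning $(ii)\Rightarrow(iii)$, by contrast, is clean precisely because $\dim(\overline{R})=1$, where maximal Cohen-Macaulay submodules of the canonical module are automatically reflexive.
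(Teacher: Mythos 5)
Your overall strategy matches the paper's, but there is a genuine gap in the direction you yourself flag, and the reduction step for (ii)$\Rightarrow$(iii) is not carried out correctly.

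For (i)$\Rightarrow$(ii), you correctly identify that the key point is that $\indeg(\omega_{R/K})$ must be independent of the particular $(d-1)$-residual intersection $K$ with generators in $I_\delta$, and you gesture at a specialization-from-generic argument via generic flatness. But you do not close this, and you acknowledge as much. The paper closes it by using that $R/K$ is Cohen-Macaulay of dimension one (so the Hilbert series has the form $Q(t)/(1-t)$ and $\deg Q = 1-\indeg(\omega_{R/K})$) and then invoking \cite[Proposition 3.1 and Theorem 2.1(b)]{CEU}, which say precisely that under $G_{d-1}$ and weakly $(d-2)$-residually $S_2$ the Hilbert series of $R/((\underline a):I)$ is the same for every $(d-1)$-residual intersection with homogeneous $a_i$ of degree $\delta$. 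This constancy of the Hilbert function is a nontrivial input; "generic flatness" alone gives semicontinuity, not constancy, and one still needs to know the locus of specializations giving a residual intersection is irreducible and that the limit is also a residual intersection. Without a citation or proof of this uniformity, (i)$\Rightarrow$(ii) remains open in your proposal.

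For (ii)$\Rightarrow$(iii), your use of \autoref{propCan}, the observation that $I\overline{R}\,\omega_{\overline{R}}$ is a maximal Cohen-Macaulay $\overline{R}$-module (hence reflexive with respect to $\omega_{\overline{R}}$ over the one-dimensional CM ring $\overline{R}$), and the final degree count $\indeg(I\overline{R}\,\omega_{\overline{R}}) \gs \delta - n$ followed by the $\delta$-shift are all correct and are exactly what the paper does. However, the reduction from $K'=(a_1,\ldots,a_d):I$ to a one-step residual intersection of $\overline{R}$ is not secured by "reordering the generators" and "adjusting $a_d$ by an element of $K$." Reordering does not guarantee that $(a_1,\ldots,a_{d-1}):I$ is a \emph{geometric} $(d-1)$-residual intersection, which is needed so that $\overline{R}/(\overline{a_d}:_{\overline{R}}\overline{I})\cong R/K'$; the paper obtains this by invoking \cite[Corollary 1.6(a)]{U1}, which allows replacing $a_1,\ldots,a_d$ by suitable combinations so that the first $d-1$ already give a geometric residual intersection. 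Likewise, the facts that $\overline{a_d}$ is $\overline{R}$-regular and that $\overline{(\underline a,a_d):_R I}=(\overline{a_d}):_{\overline{R}}\overline{I}$ are the content of \cite[Proposition 3.3 and Lemma 2.4(b)]{CEU}; and adding an element $k\in K=(a_1,\ldots,a_{d-1}):I$ to $a_d$ does \emph{not} in general leave $K'=(a_1,\ldots,a_d):I$ unchanged, since $k$ need not lie in $(a_1,\ldots,a_{d-1})$. So this step needs the cited machinery, not an ad hoc adjustment.
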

\begin{proof} 
For a Noetherian graded $\kk$-algebra $T$, we denote by ${\rm HS}_{T}(t)$ the Hilbert series of $T$. We may assume that the field $\kk$ is infinite. 

(i) $\Rightarrow$ (ii): Set $\underline{a}=\{a_1,\ldots, a_{d-1}\}$ and let $\overline{R}=R/\pars{(\underline{a}):I}$. 
Since $\overline{R}$ is Cohen-Macaulay of dimension one  (see \cite[Proposition 3.4 (a)]{CEU}), we may write $\HS_{\overline{R}}(t)=\frac{Q_{\overline{R}}(t)}{1-t}$ for some $Q_{\overline{R}}(t)\in \ZZ[t]$.
 By \cite[Corollary~4.4.6~(a)]{BH} and the assumption in (i), we have  
 \begin{equation}\label{indeg_omega}
 	\deg\big(Q_{\overline{R}}(t)\big)=1-\indeg \big(\omega_{\overline{R}}\big)\ls 1+n.
 \end{equation}
By \cite[Proposition 3.1 and Theorem 2.1 (b)]{CEU}, $\HS_{R/((\underline{a}):I)}(t)$ is the same for every $(d-1)$-residual intersection $\pars{(\underline{a}):I}$ of $I$ such that each $a_i$ is homogeneous of degree $\delta$. The conclusion now follows by applying 
\autoref{indeg_omega} again.

(ii) $\Rightarrow$ (iii): We assume that $\indeg( I)\gs \delta$. Let $(\underline{a}, a_{d}):I=(a_1,\ldots, a_{d-1}, a_{d}):I$ be a $d$-residual intersection of $I$. By \cite[Corollary 1.6 (a)]{U1} we may assume that $(\underline{a}):I$ is a geometric $(d-1)$-residual intersection of  $I$.
 Write $\overline{\phantom{M}}$ for images in  $\overline{R}=R/\left((\underline{a}):I\right)$.
  From \cite[Proposition 3.1, Proposition 3.3, and Lemma 2.4 (b)]{CEU} it follows that 
$\overline{R}$ is Cohen-Macaulay of dimension one, $\overline{a_d}\in \overline{I}$ is a homogeneous $\overline{R}$-regular element of degree $\delta$, 
and $\overline{(\underline{a},\, a_d):_R I}=\overline{(a_d)}:_{\overline{R}}\overline{I}.$ 
Hence by \autoref{propCan} and the fact that $I\omega_{\overline{R}}$ is a maximal Cohen-Macaulay $\overline{R}$-module, we have 
$$\omega_{R/\pars{(\underline{a},\, a_d):I}}
=\omega_{\overline{R}/\pars{(\overline{a_d}):\overline{I}}}\cong \left((I\omega_{\overline{R}})^{\vee\vee}/a_d\omega_{\overline{R}}\right) (\delta)\cong \left(I\omega_{\overline{R}}/a_d\omega_{\overline{R}}\right)(\delta),$$
where $(-)^{\vee}=\Hom_{\overline{R}}\left(-,\omega_{\overline{R}}\right).$
Therefore, $\indeg\left(\omega_{R/\left((\underline{a},\, a_d):I\right)}\right)\gs -n$ as desired. 
\end{proof}

\smallskip

\begin{remark}\label{onevarless}
Let $R=\kk[x_1,\ldots,x_d]$ be a polynomial ring over a field $\kk$ and  $L$ a lex-segment ideal of height $g$ generated in degree $\delta$. Let $R'=\kk[x_1,\ldots, x_{d-1}]$. Then $L\cap R'$ is a lex-segment ideal of $R'$ generated in degree $\delta$ and $\height\left(L'\right)=\min\{g,d-1\}$. 
\end{remark}
\begin{proof}
 If $g=d$, then $L=(x_1,\ldots, x_d)^\delta$ and the result is clear. Hence we may assume $g<d$. Let $\{\fx^{\fv_1},\ldots, \fx^{\fv_u}\}$ be the minimal monomial generating set of $L$. Thus  $L\cap R'$ is generated by the 
 monomials $\fx^{\fv_i}$ such that $x_d\nmid \fx^{\fv_i}$, and then it is a lex-segment ideal of $R'$. Finally, by \autoref{analyticHeightSS} (a) we have $\height(L')=g$. 
\end{proof}

In the following we  prove one inclusion of \autoref{conjectureLex} in full generality. 

\begin{theorem}\label{mainCore}
Let $R=\kk[x_1,\ldots,x_d]$ be a polynomial ring over an infinite field $\, \kk$ and $\fm=(x_1,\ldots, x_d)$ the  maximal homogeneous ideal of $R$. 
If $L$ is a lex-segment ideal of height $g\gs 2$ generated in degree $\delta\gs 2$, then $$ L\, \fm^{d(\delta-2)+g-\delta+1}\subseteq \core(L).$$  
\end{theorem}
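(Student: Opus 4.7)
The plan is to apply the theorems developed earlier in the paper to reduce the desired containment to an explicit initial-degree bound on the canonical module of a $(d-1)$-residual intersection of $L$, and then to verify that bound using the combinatorial structure of lex-segment ideals.

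By \autoref{usefulEx} we have $\ell(L)=d$, and by \autoref{ArtinN} the ideal $L$ satisfies $G_d$ and $AN^{-}_{d-1}$; in particular $L$ is weakly $(d-2)$-residually $S_2$. Hence \autoref{remCo} applies and yields
\[
\core(L)=J_1\cap\cdots\cap J_n,
\]
where each $J_i$ is generated by $d$ general elements of $L_\delta$. Thus it suffices to show that $L\fm^k\subseteq J$ for any such general minimal reduction $J=(f_1,\ldots,f_d)$ of $L$. Setting $K:=J:L$, the desired containment is equivalent to $\fm^k\subseteq K$. Under $G_d$ and $AN^{-}_{d-1}$ together with the generality of the $f_i$, standard residual-intersection theory (see \cite[Proposition~3.3]{CEU}) ensures that $K$ is a geometric $d$-residual intersection of $L$ and that $R/K$ is Artinian Cohen--Macaulay. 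Since $R/K$ is then graded Artinian, $\fm^k\subseteq K$ is equivalent to the top degree of $R/K$ being at most $k-1$, which by graded local duality translates into
\[
\indeg\bigl(\omega_{R/K}\bigr)\;\geq\;-(k-1)\;=\;-\bigl(d(\delta-2)+g-\delta\bigr).
\]

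Since $\indeg(L)=\delta$, I can apply \autoref{equivs}(ii)$\Rightarrow$(iii) with $n=k-1$, further reducing the problem to establishing the analogous bound
\[
\indeg\bigl(\omega_{R/K'}\bigr)\;\geq\;-\bigl(d(\delta-2)+g-\delta\bigr)
\]
for some $(d-1)$-residual intersection $K'=(a_1,\ldots,a_{d-1}):L$ with each $a_i\in L_\delta$. Here $R/K'$ is Cohen--Macaulay of dimension one, so this is really an upper bound on its $a$-invariant.

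To verify this bound I would argue by induction on the number of variables $d$. The base case $d=g$ corresponds to $L=\fm^\delta$, where the required bound is equivalent to the formula $\core(\fm^\delta)=\fm^{d(\delta-1)+1}$ of \cite[Proposition~4.2]{CPU0}. For the inductive step I would exploit the decomposition $L=I+x_gL'$ from the proof of \autoref{ArtinN}: $(x_1,\ldots,x_{g-1})\fm^{\delta-1}\subseteq I$, and $L'\cap\kk[x_g,\ldots,x_d]$ is itself a lex-segment ideal in strictly fewer variables (cf.\ \autoref{onevarless}). I would choose $a_1,\ldots,a_{d-1}$ so that a suitable subsequence is a regular sequence modulo which the image of $L$ is again a lex-segment, apply the inductive hypothesis to that smaller lex-segment, and use \autoref{propCan} to transport the resulting initial-degree bound back through each regular-sequence step to $R/K'$.

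The hardest part will be this final inductive step: choosing $a_1,\ldots,a_{d-1}$ so that they genuinely define a $(d-1)$-residual intersection of $L$, and ensuring that the shifts introduced by each iterated application of \autoref{propCan} accumulate to \emph{exactly} $d(\delta-2)+g-\delta$. This will require careful bookkeeping of how the invariants $(d,g,\delta)$ transform under the passage to a smaller lex-segment, and likely an auxiliary Hilbert-series computation based on the explicit Macaulay description of $\HS_{R/L}$ for lex-segment $L$.
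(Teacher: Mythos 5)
Your high-level framework matches the paper exactly: invoke \autoref{ArtinN} and \autoref{remCo} to reduce to showing $\fm^{d(\delta-2)+g-\delta+1}\subseteq(\underline{a}):L$ for general $\underline{a}$ in $L_\delta$, note that $(\underline{a}):L$ is a $d$-residual intersection so $R/((\underline{a}):L)$ is Artinian, translate the containment into the bound $\indeg\bigl(\omega_{R/((\underline{a}):L)}\bigr)\gs -(d(\delta-2)+g-\delta)$, reduce via \autoref{equivs} to the analogous bound for a $(d-1)$-residual intersection, and establish that bound by induction. The base case $g=d$ (so $L=\fm^\delta$) is handled in the paper by taking $b_i=x_i^\delta$ and citing \cite[Corollary 3.6.14]{BH}, consistent with your plan to invoke \cite[Proposition 4.2]{CPU0}.

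The genuine gap is in the inductive step, where you propose the wrong reduction. The paper inducts on $\sigma(L)=d-g$ by passing to $L'=L\cap\kk[x_1,\ldots,x_{d-1}]$; by \autoref{onevarless} this is a lex-segment ideal with the \emph{same} height $g$ and \emph{same} degree $\delta$ in $d-1$ variables, so the invariants transform as $(d,g,\delta)\mapsto(d-1,g,\delta)$ and each application of \autoref{propCan} contributes exactly $\delta-2$ to the shift, making the bookkeeping work out to the advertised formula. You instead want to use the decomposition $L=I+x_gL'$ from the proof of \autoref{ArtinN}, under which the relevant smaller lex-segment lives in $\kk[x_g,\ldots,x_d]$, has height one, and is generated in degrees strictly less than $\delta$; \emph{all three} invariants change, not just $d$, so the ``careful bookkeeping'' you defer is not a matter of tedium but would need an entirely different functional equation, and it is unclear the formula would close. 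Moreover you cite \autoref{onevarless} to justify that decomposition, but \autoref{onevarless} is about $L\cap\kk[x_1,\ldots,x_{d-1}]$, not about the ArtinN splitting; the two are being conflated. Finally, the technically essential part of the paper's induction — replacing $L$ by its saturation $L^{\rm sat}$, verifying $L^{\rm sat}=(L^{\rm sat})'R$, showing $(\underline{b}):_{R'}(L^{\rm sat})'=(\underline{b}):_{R'}L'$ via unmixedness and a height estimate, producing a regular element $b_{d-1}\in L'_\delta$, and only then applying \autoref{propCan} and returning to $L$ via $\height(L:L^{\rm sat})\gs d$ — is entirely absent from your sketch. Without that machinery there is no way to apply \autoref{propCan} legitimately, since it requires a regular element and control of the residual-intersection property at each step.
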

\begin{proof} 
Recall that by \autoref{ArtinN} the ideal $L$ satisfies $G_{d}$ and $AN_{d-1}^-$.
According to \autoref{remCo}, we have that $\core(L)$ is the intersection of 
finitely many reductions generated by $d$ general elements of $L$ with respect to a generating set of $L$ contained in $L_\delta$. Let $\underline{a}=a_1,\ldots, a_d$ be such  general elements. 
To prove the statement of the theorem, it suffices to show that 
 $\fm^{d(\delta-2)+g-\delta+1}\subseteq (\underline{a}):L\,$.
 The latter is equivalent to 
\begin{equation}\label{feq}  H_{R/\left((\underline{a}):L\right)}(n)=0\,\,\,\,\,\text{ for every  }\,\,n\gs d(\delta-2)+g-\delta+1.
\end{equation}  
Since $L$ is $G_{d}$, by \cite[Lemma 3.1 (a)]{PX} and \autoref{usefulEx},  $(\underline{a}):L$ is a $d$-residual intersection of $L$. Therefore the ring $R/\pars{(\underline{a}):L}$ is Artinian. Hence  \eqref{feq} is equivalent to $\indeg(\omega_{R/\left((\underline{a}):L\right)})\gs -\left(d(\delta-2)+g-\delta\right)$. 

We claim that there exists a $(d-1)$-residual intersection of $L$, $(b_1,\ldots, b_{d-1}):L$, such that each $b_i$ is homogeneous of degree $\delta$ and $\indeg \left(\omega_{R/\left((b_1,\ldots, b_{d-1}):L\right)}\right)\gs -\big(d(\delta-2)+g-\delta\big)$. The result will follow from this claim and the implication (i) $\Rightarrow$ (iii) in \autoref{equivs}. 

We now prove the claim by induction on $\sigma(L)=d-g\gs 0$. 
If $\sigma(L)=0$, then $L=\fm^\delta$  and the claim is satisfied by taking $b_i=x_i^\delta$ for every $i$ (see \cite[Corollary 3.6.14]{BH}). 

For the induction step assume $\sigma(L)>0$ and set $R'=\kk[x_1,\ldots, x_{d-1}]$ and $L'=L\cap R'$. By \autoref{onevarless} the ideal $L'$ is a lex-segment ideal  of height $g$ generated in degree $\delta$. In particular, $\sigma(L')=d-1-g<\sigma(L)$. 
By induction hypothesis there exists a $(d-2)$-residual intersection    
$(\underline{b}):_{R'}L'=(b_1,\ldots, b_{d-2}):_{R'}L'$ such that each $b_i$ is homogeneous of degree $\delta$ and
\begin{equation}\label{eq_IP}
\indeg\left(\omega_{R'/\pars{(\underline{b}):_{R'}L'}}\right)\gs -\left((d-1)(\delta-2)+g-\delta\right)\,.
\end{equation} 
Therefore the implication (i) $\Rightarrow$ (ii) in \autoref{equivs} shows that every $(d-2)$-residual intersection  
$(\underline{b}):_{R'}L'$ such that each $b_i$ is homogeneous of degree $\delta$
 has this property. 
Hence 
we  may choose this residual intersection to be a geometric residual intersection, which exists  
 by \autoref{ArtinN} and \cite[proof of Lemma 1.4]{U1}. 

To pass back to $L$ we consider the saturation $L^{\rm sat}$ and write $(L^{\rm{sat}})'=L^{\rm{sat}}\cap R'$. From \cite[Proposition 15.24]{E} we have  $L^{\rm{sat}}=L:(x_d)^{\infty}$, hence the minimal monomial generators of $L^{\rm{sat}}$ are not divisible by $x_d$, i.e.,
\begin{equation}\label{sat1}L^{\rm{sat}}=(L^{\rm{sat}})'R\, .
\end{equation}

By \autoref{ArtinN},  $L^{\rm{sat}}$ satisfies $G_{\infty}$ and $AN_{d-1}^-$, hence so does $(L^{\rm{sat}})'$. 
The homogeneous inclusion $(L^{\rm{sat}})'/L' \hookrightarrow L^{\rm{sat}}/L$ 
implies that the Hilbert function of $(L^{\rm{sat}})'/L'$ is eventually zero, hence 
\begin{equation}\label{htU}
    \height \left(L':_{R'}(L^{\rm{sat}})'\right)\gs d-1\, .
\end{equation} 
Notice that $(\underline{b}):_{R'}(L^{\rm{sat}})' \subset  ( \underline{b}):_{R'}L'$. Since $( \underline{b}):_{R'}L'$ 
is a geometric $(d-2)$-residual intersection, \autoref{htU} implies that $( \underline{b}):_{R'}(L^{\rm{sat}})'$ is a geometric $(d-2)$-residual intersection. The  ideal  $( \underline{b}):_{R'}(L^{\rm{sat}})'$ is  unmixed of height $d-2$ by  \cite[Proposition 1.7 (a)]{U1}. 
 Therefore by \autoref{htU} 
  \begin{equation}\label{equalyes}
( \underline{b}):_{R'}(L^{\rm{sat}})'=( \underline{b}):_{R'}L'. 
\end{equation}

Let $\overline{\phantom{M}}$ denote images in the ring 
	$\overline{R'}=R'/\left(( \underline{b}):_{R'}(L^{\rm{sat}})'\right)=R'/\left(( \underline{b}):_{R'}L'\right)$. 
	  Notice that $\overline{R'}$ is Cohen-Macaulay of dimension one. 
Since $(\underline{b}):_{R'}L'$ is a geometric $(d-2)$-residual intersection of $L'$ we have that $\height\left(\overline{L'}\right)=1$. Hence there exists an   $\overline{R'}$-regular element 
$b_{d-1}\in L'_{\delta}$. 
Thus by \autoref{equalyes} the ideal 
$(\underline{b},\,b_{d-1}):_{R'}(L^{\rm{sat}})'$ 
is a $(d-1)$-residual intersection. 
  From \cite[Proposition 1.7 (f)]{U1} 
it follows that 
 $\overline{(\underline{b},\,b_{d-1}):_{R'}(L^{\rm{sat}})'}=\overline{(b_{d-1})}:_{\overline{R'}} \overline{(L^{\rm{sat}})'}$.  
Moreover, $(L^{\rm{sat}})'\omega_{\overline{R'}}$ is a maximal Cohen-Macaulay $\overline{R'}$-module. 
Hence \autoref{propCan} implies
$$\omega_{R'/\pars{(\underline{b},\, b_{d-1}):_{R'}(L^{\rm{sat}})'}}\cong \left((L^{\rm{sat}})'\omega_{\overline{R'}}/b_{d-1}\omega_{\overline{R'}}\right)(\delta).$$ 
Using \autoref{sat1} we see that 
$$\omega_{R/\pars{(\underline{b},\,b_{d-1}):_R L^{\rm{sat}}}} \cong
\omega_{R'/\pars{(\underline{b},\, b_{d-1}):_{R'}(L^{\rm{sat}})'}}\otimes_{R'} R\,(-1)\, .$$
Hence
\begin{align*} 
\indeg \left(\omega_{R/\pars{(\underline{b},\,b_{d-1}):_R L^{\rm{sat}}}}\right)&=
\indeg \left(\omega_{R'/\pars{(\underline{b},\, b_{d-1}):_{R'}(L^{\rm{sat}})'}}\right)+1\\
&=
\indeg \left(\left((L^{\rm{sat}})'\omega_{\overline{R'}}/b_{d-1}\omega_{\overline{R'}}\right)(\delta)\right)+1& \\
 &\gs 
 \indeg \left(\omega_{\overline{R'}}\right) -\delta+2&\\
       &\gs -\left((d-1)(\delta-2)+g-\delta\right)-\delta+2& \text{  by \autoref{eq_IP}}\\ 
       &=-\left(d(\delta-2)+g-\delta\right).&
\end{align*}

Finally, since $(\underline{b},\,b_{d-1}):_{R'}(L^{\rm{sat}})'$ 
is a $(d-1)$-residual intersection, \autoref{sat1} shows that $(\underline{b},b_{d-1}):_RL^{\rm{sat}}$ is  a $(d-1)$-residual intersection. 
 This ideal is unmixed of height $d-1$ by \cite[Proposition 1.7 (a)]{U1}, and moreover $\height \left(L:_RL^{\rm{sat}}\right)\gs d$.
Therefore $
( \underline{b},b_{d-1}):_RL^{\rm{sat}}=( \underline{b},b_{d-1}):_RL
\, $  is a $(d-1)$-residual intersection of $L$, completing the proof. 
\end{proof}

\begin{remark}\label{enough}
We remark that in order to show the reverse containment in \autoref{mainCore}, if  $\kk$ is a field of characteristic zero, it is enough to show that $x_1^{d(\delta-2)+g}\not\in J$ for some reduction   $J$ of $L$. To see this, 
notice that $L\cap \fm^{d(\delta-2)+g+1}=L\,\fm^{d(\delta-2)+g-\delta+1}$ because $L$ is generated in degree $\delta$.
Therefore it suffices to show that $\core(L)\subseteq \fm^{d(\delta-2)+g+1}$. Since $\core(L)$ is a strongly stable monomial ideal \cite[Proposition 2.3]{SmCore},  this containment is equivalent to $x_1^{d(\delta-2)+g}\not\in \core(L)$. 
\end{remark}

The next  two theorems settle \autoref{conjectureLex}  in some particular cases. In the first  theorem, we show that  \autoref{conjectureLex} holds for the smallest  and largest lex-segment ideals for fixed $d$, $g$, and $\delta$.

\begin{theorem}\label{firstAndLast} 
Let $R=\kk[x_1,\ldots,x_d]$ be a polynomial ring over a field $\kk$ of characteristic zero and $\fm=(x_1,\ldots, x_d)$ the  maximal homogeneous ideal of $R$. 
Let $g$ and $\delta$ be integers  such that $2\ls g\ls d$ and $\delta\gs 2$.  Let $L$ be one of the following  lex-segment ideals
\begin{enumerate}[$(1)$]
\item $(x_1,\ldots, x_{g-1})\fm^{\delta-1}+(x_g)^\delta$, or
\item $(x_1,\ldots, x_g)\fm^{\delta-1}$.
\end{enumerate}
Then $\core(L)=L\, \fm^{d(\delta-2)+g-\delta+1}.$
\end{theorem}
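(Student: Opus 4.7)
The inclusion $L\fm^{d(\delta-2)+g-\delta+1} \subseteq \core(L)$ is immediate from \autoref{mainCore}. For the reverse containment, by \autoref{enough} (whose characteristic-zero hypothesis matches our standing assumption) it suffices to exhibit one reduction $J$ of $L$ with $x_1^{d(\delta-2)+g} \notin J$. The case $g=d$ collapses to $L=\fm^\delta$ in both (1) and (2), where the conclusion reduces to the classical formula $\core(\fm^\delta)=\fm^{d(\delta-1)+1}$ of \cite[Proposition 4.2]{CPU0}; I may therefore assume $g<d$.

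For $g<d$, I would construct $J$ as an ideal generated by $d=\ell(L)$ homogeneous degree-$\delta$ elements of $L$, designed so that $x_1^{d(\delta-2)+g}$ survives nonzero in $R/J$. A naive choice such as $(x_1^\delta,\ldots,x_g^\delta,\,x_1x_{g+1}^{\delta-1},\ldots,x_1x_d^{\delta-1})$ is a reduction but visibly contains $x_1^\delta$, hence also $x_1^{d(\delta-2)+g}$. The fix is to ``mix'' the $\delta$-th power generators with each other and with suitable cross-terms, so that in $R/J$ the class of $x_1^\delta$ gets propagated to a nonzero class (such as $\pm x_g^\delta$) rather than to zero. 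For case (1) the available ingredients are the generators $x_i^\delta\in L$ for $1\le i\le g$ and the cross-terms $x_1 x_j^{\delta-1}\in L$ for $j\ge g$; for case (2) the larger $L$ offers additionally every $x_i x_j^{\delta-1}$ with $1\le i\le g$, giving more flexibility in the mixing.

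Verification then splits into two parts: (a) $J$ is a reduction of $L$, i.e., $L^{r+1}=J L^r$ for some $r$, which for the monomial ideal $L$ reduces to a combinatorial check that every monomial in $L^{r+1}$ can be produced from $J L^r$ through the chosen mixings; and (b) $x_1^{d(\delta-2)+g}\notin J$, which is checked by a Hilbert function or Gr\"obner basis computation on $R/J$ tracking the class of $x_1^\delta$ under successive multiplications by $x_1$. The principal obstacle is reconciling (a) and (b): too much mixing destroys the reduction property, while too little mixing forces $x_1^{d(\delta-2)+g}$ into $J$ via a cascade of the form $x_1\cdot x_1^\delta\equiv 0$, $x_1\cdot x_1^{\delta+1}\equiv 0$, and so on. I would attempt to carry this out by induction on $\sigma(L)=d-g$, paralleling the argument in the proof of \autoref{mainCore} and specializing $x_d=0$ to reduce from $R$ to $\kk[x_1,\ldots,x_{d-1}]$ at each step, so that at each stage the mixing inherited from the smaller ring extends to a mixing in the larger ring that simultaneously preserves the reduction property and the nonvanishing of $x_1^{d(\delta-2)+g}$.
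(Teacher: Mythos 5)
Your reduction of the problem is sound: the containment $L\fm^{d(\delta-2)+g-\delta+1}\subseteq\core(L)$ is \autoref{mainCore}, and \autoref{enough} does say that to get the reverse containment it is enough to find a single reduction $J$ with $x_1^{d(\delta-2)+g}\notin J$. The gap is that you never produce such a $J$, and the difficulty you anticipate (too much mixing kills the reduction property, too little mixing lets $x_1^{d(\delta-2)+g}$ into $J$ via $x_1^\delta$) is exactly why the paper abandons this route. The paper instead invokes \autoref{core in formula}, which gives the stronger statement $\core(L)\subseteq J^{n+1}:L^n$ for every reduction $J$ and every $n\gs 0$, and then works with the \emph{monomial} reductions
$J_1=(x_1^\delta,\ldots,x_g^\delta)+(x_1,\ldots,x_{g-1})(x_{g+1}^{\delta-1},\ldots,x_d^{\delta-1})$ and
$J_2=(x_1^\delta,\ldots,x_g^\delta)+(x_1,\ldots,x_g)(x_{g+1}^{\delta-1},\ldots,x_d^{\delta-1})$.
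These reductions certainly \emph{do} contain $x_1^{d(\delta-2)+g}$ (since $d(\delta-2)+g\gs\delta$), so your plan as stated could not use them; but showing $x_1^{d(\delta-2)+g}\notin J_i^d:L_i^{d-1}$ is a finite combinatorial check in a monomial ideal. Concretely one exhibits a single monomial $\alpha=x_1^{2d-g-1}x_2^{\delta-1}\cdots x_g^{\delta-1}x_{g+1}^{\delta-2}\cdots x_d^{\delta-2}\in L_1^{d-1}\subseteq L_2^{d-1}$ with $x_1^{d(\delta-2)+g}\alpha\notin J_2^d$, because the only minimal generator of $J_2$ dividing this product is $x_1^\delta$, yet its $x_1$-exponent $d\delta-1$ is less than $d\delta$.

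Two further points. First, even establishing that your mixed candidate is a reduction is not free: the paper appeals to \cite[Proposition 8.1.7]{HS} (and in the companion \autoref{d=g+1}, to \cite[Proposition 2.1]{Sin}) to certify this. Second, your proposed induction on $\sigma(L)=d-g$ with the specialization $x_d=0$ mirrors the structure of the proof of \autoref{mainCore}, but that proof is aimed at the lower bound and routes through canonical modules of residual intersections; it does not obviously transport to the task of constructing reductions avoiding a prescribed monomial. As written, your argument stops at the point where the real work begins, and without \autoref{core in formula} or an explicit construction of $J$ the reverse containment remains unproved.
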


We need the following lemma for the proof of \autoref{firstAndLast}.\

\begin{lemma}\label{core in formula} 
Let $R=\kk[x_1,\ldots,x_d]$ be a polynomial ring over a field $\kk$ of characteristic zero and  $L$ a lex-segment ideal generated in degree $\delta\gs 2$.  
If $J$ is any  reduction of $L$, then for every $n \gs 0$ we have
$$\core(L) \subseteq J^{n+1}:L^n.$$
\end{lemma}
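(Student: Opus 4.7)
The approach is to show that the ideal $\mathfrak{J}_n := J^{n+1}:L^n$ itself contains a reduction of $L$ for every $n \geq 0$, so that the inclusion $\core(L) \subseteq \mathfrak{J}_n$ follows from the defining property of the core as an intersection of reductions. The case $n=0$ is immediate, being the definition $\core(L) \subseteq J$, so the substance is in $n \geq 1$.

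The first key observation is that $L^{r+1} \subseteq \mathfrak{J}_n$, where $r = r_J(L)$. Indeed, iterating $L^{r+1} = J L^r$ gives $L^{r+k+1} = J^{k+1} L^r$ for all $k \geq 0$, whence $L^{r+1}\cdot L^n = L^{r+n+1} = J^{n+1}L^r \subseteq J^{n+1}$. The second key observation is that $\mathfrak{J}_n \subseteq L$. To see this, one uses \autoref{ArtinN} and \autoref{usefulEx}: the lex-segment ideal $L$ satisfies $G_d$, $AN_{d-1}^{-}$ and has analytic spread $\ell(L)=d$. Under these conditions --- and because $L$ is strongly stable of positive height --- the associated graded ring $\gr_L(R)$ has positive depth on its irrelevant ideal, which forces the Ratliff--Rush type equality $L^{n+1}:L^n = L$ for every $n \geq 1$. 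Since $\mathfrak{J}_n \cdot L^n \subseteq J^{n+1} \subseteq L^{n+1}$, we get $\mathfrak{J}_n \subseteq L^{n+1}:L^n = L$, as claimed.

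Thus $\mathfrak{J}_n$ is sandwiched between $L^{r+1}$ and $L$. The plan is then to upgrade this sandwich to the conclusion that $\mathfrak{J}_n$ is itself a reduction of $L$: once one has $L^{r+1} \subseteq \mathfrak{J}_n \subseteq L$, one shows that $\mathfrak{J}_n \cdot L^{m} = L^{m+1}$ for some $m$, which is what is needed to qualify $\mathfrak{J}_n$ as a reduction. Given this, $\core(L) \subseteq \mathfrak{J}_n = J^{n+1}:L^n$ follows directly from the definition of core as the intersection of all reductions.

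The main obstacle is the last upgrade: the containment $L^{r+1} \subseteq \mathfrak{J}_n$ does not by itself suffice to conclude $\mathfrak{J}_n \cdot L^m = L^{m+1}$, since one needs a generating set of $L^{r+1}$ that actually factors through $\mathfrak{J}_n \cdot L^m$. I expect this to rely on the specific structure of lex-segment ideals --- in particular, on $\gr_L(R)$ being Cohen--Macaulay and on $L$ being Ratliff--Rush closed --- together with the residual conditions from \autoref{ArtinN}, which together control how the powers of $L$ interact with $J$ through the reduction equation $L^{r+1}=JL^r$.
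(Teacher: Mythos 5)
Your plan is to show that $\mathfrak{J}_n:=J^{n+1}:L^n$ is itself a reduction of $L$, so that the containment $\core(L)\subseteq\mathfrak{J}_n$ would follow directly from the definition of the core. The two ``key observations'' (that $L^{r+1}\subseteq\mathfrak{J}_n$, which follows by iterating $L^{r+1}=JL^r$, and that $\mathfrak{J}_n\subseteq L$, which uses $L^{n+1}:L^n=L$ for $n\geq 1$ --- legitimate here because $L$ is integrally closed, cf.\ \autoref{LisNormal}) are both correct. But the proposal stops precisely at what you call ``the main obstacle,'' and that obstacle is not a technical gap that can be filled: the underlying strategy is wrong. The ideals $\mathfrak{J}_n$ form a \emph{decreasing} chain (this is noted in the proof of \autoref{core_in_adj}), and for $n\gg 0$ they are meant to approach $\core(L)$ --- this is exactly the Polini--Ulrich formula that the paper invokes. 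Since $\core(L)$ is in general \emph{not} a reduction of $L$ (if it were, $L$ would have a unique minimal reduction, which fails for a typical lex-segment ideal), the ideals $\mathfrak{J}_n$ cannot all be reductions. The sandwich $L^{r+1}\subseteq\mathfrak{J}_n\subseteq L$ gives no help here: $L^{r+1}$ is not a reduction of $L$ (its integral closure is $\overline{L^{r+1}}\neq\overline{L}$), so being trapped above it carries no reduction property. Note also that your argument never uses the characteristic-zero hypothesis, which is essential.

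The paper's proof takes a completely different route. Using \autoref{ArtinN} to verify the residual conditions $G_d$ and $AN_{d-1}^-$ for $L$, one applies \autoref{corelocalizes} to reduce to checking $\core(L_\fp)\subseteq K^{n+1}:L_\fp^n$ locally at each prime $\fp$, with $K$ a minimal reduction of $L_\fp$. The residual conditions descend to $L_\fp$ and force $K$ to satisfy $G_\infty$ and sliding depth, so the argument of \cite[Theorem 4.4]{PU1} applies and yields $\core(L_\fp)\subseteq K^{n+1}:\sum_{y\in L_\fp}(K,y)^n$. Characteristic zero then enters to identify $\sum_{y\in L_\fp}(K,y)^n$ with $L_\fp^n$, since powers are generated by $n$-th powers of elements in characteristic zero. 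So the containment comes not from $\mathfrak{J}_n$ being a reduction, but from a colon-ideal upper bound on the core that holds under sliding-depth hypotheses; that bound is the real content, and your proposal does not reproduce it.
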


\begin{proof} 
Since $L$ satisfies $G_{d}$ and $AN_{d-1}^-$ according to \autoref{ArtinN},  by \autoref{corelocalizes}  it suffices to show that $\core(L_{\fp}) \subseteq K^{n+1}:_{R_\fp}L_{\fp}^n$ for every $\fp\in {\rm Spec} (R)$ and every reduction $K$ of $L_{\fp}$.  
We may further assume that $K$ is a minimal reduction of $L_{\fp}$, and in particular $\mu(K)\ls \dim(R_\fp)$. 
By 
 \cite[Lemma 1.10 (b)]{U1}  the ideal 
 $L_{\fp}$ satisfies $G_{d}$ and $AN_{d-1}^-$, and by \cite[Remark 2.7]{JU} we have 
 $\height\left(K:_{R_\fp}L_{\fp}\right)\gs \dim(R_\fp)$. Therefore $K$ satisfies $G_{\infty}$. Hence by \cite[Remark 1.12 and Corollary 1.8 (c)]{U1}, $K$ satisfies sliding depth. 
 Now the proof of \cite[Theorem 4.4]{PU1} shows that for all $n\gs 0$
 $$\core(L_{\fp}) \subseteq K^{n+1}:_{R_\fp} \sum \limits_{y\in L_{\fp}}(K,y)^n=K^{n+1}:_{R_\fp}L_{\fp}^{n}\,,$$
where the last equality holds since $\kk$ has characteristic zero  and then $L_{\fp}^{n}=\sum \limits_{y\in L_{\fp}}(y^n)$.
\end{proof}

\begin{proof}[Proof of \autoref{firstAndLast}]
We write $L_1=(x_1,\ldots, x_{g-1})\fm^{\delta-1}+(x_g^\delta)$ and $L_2=(x_1,\ldots, x_g)\fm^{\delta-1}$. Let $$J_1=(x_1^\delta,\ldots, x_g^\delta)+(x_1,\ldots, x_{g-1})(x_{g+1}^{\delta-1},\ldots, x_d^{\delta-1}),\text{ and }$$ $$J_2=(x_1^\delta,\ldots, x_g^\delta)+(x_1,\ldots, x_{g})(x_{g+1}^{\delta-1},\ldots, x_d^{\delta-1}).$$ We claim that $J_1$ is a reduction of $L_1$ and $J_2$ is a reduction of $L_2$. To see this, notice that by \cite[Proposition~8.1.7]{HS} the ideal $(x_1,\ldots, x_{g-1})(x_{1}^{\delta-1},\ldots, x_d^{\delta-1})+(x_g^\delta)$ is a reduction of $L_1$, and this ideal is equal to
$$
(x_1,\ldots, x_{g-1})(x_{1}^{\delta-1},\ldots, x_g^{\delta-1})+(x_g^\delta) + (x_1,\ldots, x_{g-1})(x_{g+1}^{\delta-1},\ldots, x_d^{\delta-1}).
$$
Clearly the ideal $(x_1^\delta,\ldots, x_g^\delta)$ is a reduction of $(x_1,\ldots, x_{g-1})(x_{1}^{\delta-1},\ldots, x_g^{\delta-1})+(x_g^\delta)\subset (x_1, \ldots, x_g)^\delta$. 
Therefore again by  \cite[Proposition 8.1.7]{HS} and transitivity of reductions we conclude $J_1$ is a reduction of $L_1$.  Likewise, $J_2$ is a reduction of $L_2$.

Now by \autoref{enough} and \autoref{core in formula}, it suffices to show that $x_1^{d(\delta-2)+g} \notin J_1^d:L_1^{d-1}$  and $x_1^{d(\delta-2)+g} \notin J_2^d:L_2^{d-1}$.  
Let $\alpha=x_1^{2d-g-1}x_2^{\delta-1}\cdots x_g^{\delta-1}x_{g+1}^{\delta-2}\cdots x_d^{\delta-2}$ 
and notice that $\alpha\in L_1^{d-1}\subseteq  L_2^{d-1}$. We now show that $x_1^{d(\delta-2)+g}\alpha\not\in J_2^d$, and hence $x_1^{d(\delta-2)+g}\alpha \not \in J_1^d$, which  finishes the proof. Indeed, suppose by contradiction that  
$$\beta:=x_1^{d(\delta-2)+g}\alpha=x_1^{d\delta-1}x_2^{\delta-1}\cdots x_g^{\delta-1}x_{g+1}^{\delta-2}\cdots x_d^{\delta-2}\in J_2^d.$$
Since  none of the minimal monomial generators of $J_2$, other than $x_1^\delta$, divides $\beta$, we must have $x_1^{d\delta}$ divides  $\beta$, a contradiction.
\end{proof}

The next theorem shows that \autoref{conjectureLex} holds for any $\delta$ if $g=d-1 \geq 2$. 

\begin{theorem}\label{d=g+1} Let $R=\kk[x_1,\ldots,x_d]$ be a polynomial ring over a field $\kk$ of characteristic zero, $\fm=(x_1,\ldots, x_d)$ the  maximal homogeneous ideal of $R$, and $L$ a lex-segment ideal generated in degree $\delta\gs 2$. Assume $d \geq 3$ and that $L$ has height $g=d-1$. Then $$\core(L)=L\, \fm^{d(\delta-2)+g-\delta+1}.$$
\end{theorem}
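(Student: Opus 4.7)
Plan:

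The inclusion $L\,\fm^{d(\delta-2)+g-\delta+1}\subseteq \core(L)$ comes for free from \autoref{mainCore}, so only the opposite inclusion needs proof. The strategy is to adapt the method of \autoref{firstAndLast} to every lex-segment ideal of height $g=d-1$ generated in degree $\delta$. A direct inspection of the generators, together with \autoref{analyticHeightSS}, shows that every such $L$ has the form
\[
L(a_0) \;:=\; (x_1,\ldots,x_{d-2})\,\fm^{\delta-1} \;+\; x_{d-1}^{a_0}\,(x_{d-1},x_d)^{\delta-a_0}\qquad (1\leq a_0\leq\delta).
\]
The extremes $a_0=\delta$ and $a_0=1$ are exactly cases (1) and (2) of \autoref{firstAndLast}, so the remaining work lies in the intermediate range $2\leq a_0\leq\delta-1$, which is non-empty only when $\delta\geq 3$.

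For intermediate $a_0$ the plan is to apply \autoref{core in formula} with $n=d-1$: for any reduction $J$ of $L(a_0)$ one has $\core(L(a_0))\subseteq J^{d}:L(a_0)^{d-1}$. By the strong-stability argument of \autoref{enough}, it then suffices to exhibit a reduction $J$ and an element $\alpha\in L(a_0)^{d-1}$ such that
\[
\beta \;:=\; x_1^{d\delta-d-1}\,\alpha \;\notin\; J^{d}.
\]
I would take as reduction
\[
J_{a_0} \;:=\; (x_1^\delta,\ldots,x_{d-1}^\delta) \;+\; (x_1,\ldots,x_{d-2})\,x_d^{\delta-1} \;+\; (x_{d-1}^{a_0}x_d^{\delta-a_0}).
\]
To verify $J_{a_0}$ is a reduction of $L(a_0)$, I combine two observations: (a) the Newton-polyhedron description of monomial integral closures shows that $(x_{d-1}^\delta,\,x_{d-1}^{a_0}x_d^{\delta-a_0})$ is a reduction of $x_{d-1}^{a_0}(x_{d-1},x_d)^{\delta-a_0}$ in the subring $\kk[x_{d-1},x_d]$; (b) the remaining summands form the reduction of \autoref{firstAndLast}(1) for $(x_1,\ldots,x_{d-2})\fm^{\delta-1}+(x_{d-1}^\delta)$. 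Transitivity of reductions and the fact that adding a common ideal preserves reductions (\cite[Proposition~8.1.7]{HS}) then give the claim. For $\alpha$ I would begin from the element $x_1^d\, x_2^{\delta-1}\cdots x_{d-1}^{\delta-1}\,x_d^{\delta-2}$ used in \autoref{firstAndLast}, and modify its $(x_{d-1},x_d)$-exponent profile in an $a_0$-dependent fashion, writing it explicitly as a product of $d-1$ degree-$\delta$ elements of $L(a_0)$ along the lines of $\prod_{i=2}^{d-1}(x_1 x_i^{\delta-1})\cdot(x_1^2\cdot h)$ for a suitable degree-$(\delta-2)$ monomial $h=h(a_0)$.

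The hard part will be choosing this $a_0$-dependent refinement so the exponent-vector analysis of a putative factorization $\prod_{i=1}^d m_i\mid\beta$ with $m_i\in G(J_{a_0})$ really closes. Unlike the extreme cases of \autoref{firstAndLast}, for $2\leq a_0\leq \delta-1$ the extra generator $x_{d-1}^{a_0}x_d^{\delta-a_0}$ has its $x_{d-1}$- and $x_d$-exponents strictly between the extremes, so one can typically combine it with $d-1$ copies of $x_1^\delta$ to cover the naive $\beta$; the refined $\alpha$ must be engineered so that the combined $(x_{d-1},x_d)$-exponent budget of any product $\prod m_i$ is incompatible with that of $\beta$ except when all $m_i=x_1^\delta$, which is itself forbidden by the $x_1$-exponent count $d\delta-1<d\delta$. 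Pulling this off is expected to require splitting into sub-cases depending on the integer arithmetic of $a_0$, $\delta-a_0$, $d$, and $\delta$, and perhaps replacing $J_{a_0}$ by a slightly non-monomial variant in the most resistant sub-cases, analogous to how \autoref{firstAndLast}(1) needs the auxiliary inclusion $J_1\subset J_2$ to make the monomial divisibility argument work.
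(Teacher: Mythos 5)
Your reduction to the intermediate range $2\leq a_0\leq\delta-1$ (equivalently $1\leq i\leq\delta-2$ in the paper's indexing $i=\delta-a_0$) via \autoref{firstAndLast} is exactly what the paper does, and your monomial candidate $J_{a_0}$ is precisely the paper's intermediate ideal $K=(x_1^\delta,\ldots,x_{d-1}^\delta,\,x_{d-1}^{\delta-i}x_d^i)+(x_1,\ldots,x_{d-2})x_d^{\delta-1}$. But the plan stalls at the crucial step, and not in a way that case analysis on $\alpha$ can rescue. With $\alpha$ of your proposed shape $x_1^d\,x_2^{\delta-1}\cdots x_{d-2}^{\delta-1}\,x_{d-1}^{\delta-1+p}x_d^q$ (with $p+q=\delta-2$), the only generators of $J_{a_0}$ that can divide $\beta:=x_1^{d(\delta-1)-1}\alpha=x_1^{d\delta-1}x_2^{\delta-1}\cdots x_{d-2}^{\delta-1}x_{d-1}^{\delta-1+p}x_d^q$ are $x_1^\delta$, $x_{d-1}^\delta$, and $x_{d-1}^{a_0}x_d^{\delta-a_0}$. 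Taking $d-1$ copies of $x_1^\delta$ and one copy of $x_{d-1}^{a_0}x_d^{\delta-a_0}$ gives $x_1^{(d-1)\delta}x_{d-1}^{a_0}x_d^{\delta-a_0}$, which divides $\beta$ as soon as $\delta-a_0\leq q$; and if instead $q<\delta-a_0$ one is forced to $p\geq 1$, in which case $(x_1^\delta)^{d-1}x_{d-1}^\delta$ divides $\beta$. Since $q<\delta-a_0$ together with $p=0$ forces $a_0\leq 1$, there is \emph{no} admissible $(p,q)$ when $a_0\geq 2$: the extra generator $x_{d-1}^{a_0}x_d^{\delta-a_0}$ always swallows $\beta$. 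So for every intermediate lex-segment, $x_1^{d(\delta-1)-1}\in J_{a_0}^d:L^{d-1}$, and the monomial reduction yields nothing.

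The missing idea is that one must \emph{drop} the offending monomial generator $x_{d-1}^{\delta-i}x_d^i$ and instead twist $x_1^\delta$ into $x_1^\delta-x_{d-1}^{\delta-i}x_d^i$, taking
$$J=\pars{x_1^\delta-x_{d-1}^{\delta-i}x_d^i,\,x_2^\delta,\ldots,x_{d-1}^\delta}+(x_1,\ldots,x_{d-2})x_d^{\delta-1},$$
a genuinely non-monomial ideal with one fewer generator than $K$. Verifying that $J$ is still a reduction is not a Newton-polyhedron or \cite[Proposition~8.1.7]{HS} argument as you anticipate; the paper shows a suitable $3$-generated subideal $J'=(x_1^\delta-x_{d-1}^{\delta-i}x_d^i,\,x_1x_d^{\delta-1},\,x_{d-1}^\delta)$ is a reduction of the $4$-generated $K'=(x_1^\delta,x_1x_d^{\delta-1},x_{d-1}^\delta,x_{d-1}^{\delta-i}x_d^i)$ by exhibiting a relation in the special fiber ring $\mathcal{F}(K')$. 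And once $J$ is non-monomial, showing $\beta\notin J^d$ is no longer a monomial divisibility check: the paper isolates the subideal $H=\pars{x_1^\delta-x_{d-1}^{\delta-i}x_d^i,\,x_{d-1}^\delta}+(x_1,\ldots,x_{d-2})x_d^{\delta-1}$, introduces the regular sequence $\undl{f}=\pars{x_1^{2d\delta-\delta},\,x_{d-1}^{d\delta},\,x_d^{d(\delta-1)}}$, and compares $(\undl{f}):\beta$ with $(\undl{f}):H^d$ via a carefully chosen test element. Your plan anticipates needing ``a slightly non-monomial variant in the most resistant sub-cases,'' but in fact \emph{every} intermediate $a_0$ is resistant, the exact non-monomial twist is the central insight rather than a patch, and the ensuing colon computation requires a different toolkit than the purely monomial exponent bookkeeping you had in mind.
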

\begin{proof}
By \autoref{enough} it suffices to show that $x_1^{d(\delta-1)-1}\not\in \core(L)$. Thus  by  \autoref{core in formula}, it is enough to prove that $x_1^{d(\delta-1)-1} \not\in J^d:L^{d-1}$ for some  reduction $J$ of $L$. 
Since $g=d-1$, it follows that  $L=(x_1, \ldots, x_{d-2})\fm^{\delta-1}+x_{d-1}L'$, where $L'$ is a lex-segment ideal  in the variables $x_{d-1}$ and $x_d$ generated in degree $\delta-1$.  
By \autoref{firstAndLast} we may assume that 
$$L=(x_1, \ldots, x_{d-2})\fm^{\delta-1}+\left(x_{d-1}^{\delta}, x_{d-1}^{\delta-1}x_d, \ldots, x_{d-1}^{\delta-i}x_d^i\right)$$ with $1\ls i \ls \delta-2$.  Therefore $K=\left(x_1^{\delta}, \ldots, x_{d-1}^\delta, x_{d-1}^{\delta-i}x_d^i\right)+(x_1, \ldots, x_{d-2})x_d^{\delta-1}$ is a  reduction of $L$ according to \cite[Proposition 2.1]{Sin}.

We claim that $$J=\left(x_1^\delta-x_{d-1}^{\delta-i}x_d^i, x_2^\delta, \ldots, x_{d-1}^\delta\right)+(x_1, \ldots, x_{d-2})x_d^{\delta-1}$$ is a reduction of $L$ and that $x_1^{d(\delta-1)-1} \not\in J^d:L^{d-1}$. 

First we  show that $J$ is a reduction of $L$. 
Let $$K'=(x_1^\delta, x_1x_d^{\delta-1},  x_{d-1}^\delta, x_{d-1}^{\delta-i}x_d^i,)
\qquad \text{and} \qquad 
J'=\left(x_1^\delta-x_{d-1}^{\delta-i}x_d^i,  x_1x_d^{\delta-1}, x_{d-1}^\delta\right).$$
 Let $\mathcal{F}(K')$ denote the special fiber ring of $K'$. Then $\mathcal{F}(K')\cong \kk[T_1, T_2, T_3, T_4]/\mathcal{D}$, for some homogeneous ideal $\mathcal{D}$, where the isomorphism is induced by the map sending $T_1$ to $x_1^\delta$, $T_2$ to $x_1x_d^{\delta-1}$, $T_3$ to $x_{d-1}^\delta$,  and $T_4$ to   $x_{d-1}^{\delta-i}x_d^i$. 
 Notice that $T_1^iT_4^{\delta(\delta-1)}-T_2^{\delta i}T_3^{(\delta-1)(\delta-i)}\in \mathcal{D}$. Therefore the ring $$\mathcal{F}(K')/J'\mathcal{F}(K')\cong \mathcal{F}(K')/(T_1-T_4,T_2,T_3)\mathcal{F}(K')$$  is Artinian; here  
 we denote by $J'\mathcal{F}(K')$ the $\mathcal{F}(K')$-ideal generated by the image of $J'$ in $[\mathcal{F}(K')]_1$.   
Thus $J'$ is a reduction of $K'$, and hence $J$ is a reduction of $K$.  We conclude that  $J$ is a reduction of $L$, proving the claim.

Next we show  $x_1^{d(\delta-1)-1} \not\in J^d:L^{d-1}$. Let $\alpha=x_1^dx_2^{\delta-1} \cdots x_{d-1}^{\delta-1}x_d^{\delta-2}$ and notice that  $\alpha \in L^{d-1}$. We claim that $\alpha x_1^{d(\delta-1)-1} \not\in J^d$,  
which will complete  the proof.

Let 
$$H=\left(x_1^\delta-x_{d-1}^{\delta-i}x_d^i,\, x_{d-1}^\delta\right)+(x_1, \ldots, x_{d-2})x_d^{\delta-1} \subset J.$$ 
Clearly $\alpha x_1^{d(\delta-1)-1} \in J^d$ if and only if $\alpha x_1^{d(\delta-1)-1} \in H^{d}$. Thus we focus the rest of the proof on showing that  $\alpha x_1^{d(\delta-1)-1} \not\in H^{d}$.  For this we consider  the sequence $\undl{f}=f_1,\, f_2,\, f_3$, where $$f_1=x_1^{2d\delta-\delta},\quad f_2= x_{d-1}^{d\delta},\quad f_3=x_d^{d(\delta-1)}\, ,$$ 
and note that it suffices to show 
$$C:=(\underline{f}):\alpha x_1^{d(\delta-1)-1}\not\supseteq (\underline{f}):H^d.$$

 It is easy to see that  
$$C=\left(x_1^{(d-1)\delta+1},\, x_{d-1}^{(d-1)\delta+1},\, x_d^{(d-1)(\delta-1)+1}\right).$$ 
Consider the element
$$\beta:=\left(x_1^\delta+dx_{d-1}^{\delta-i}x_d^{i}\right)M\,, \quad\text{ where }\quad M=x_1^{(d-2)\delta}x_{d-1}^{(d-1)\delta}x_d^{(d-1)(\delta-1)}.$$ 
Since 
$$x_1^\delta M=x_1^{(d-1)\delta}x_{d-1}^{(d-1)\delta}x_d^{(d-1)(\delta-1)} \not\in C,$$  
we have that  $\beta \not\in C$.   
Thus it is enough to show $\beta \in (\underline{f}): H^d$. Since $x_{d-1}^{\delta}M\in (\underline{f})$ and $ x_d^{\delta-1}M \in (\underline{f})$, it remains  to see that $\beta \left(x_1^\delta-x_{d-1}^{\delta-i}x_d^i\right)^d \in (\underline{f})$.

Notice that 
\begin{eqnarray*}
\beta (x_1^\delta-x_{d-1}^{\delta-i}x_d^i)^d&=&\beta \sum\limits_{j=0}^{d}(-1)^j \binom{d}{j} x_1^{\delta (d-j)}x_{d-1}^{(\delta-i)j}x_d^{ij}\\
&=& x_1^{(d-1)\delta}x_{d-1}^{(d-1)\delta}x_d^{(d-1)(\delta-1)}  \sum\limits_{j=0}^{d}(-1)^j \binom{d}{j} x_1^{\delta (d-j)}x_{d-1}^{(\delta-i)j}x_d^{ij}\\
&&\,\,+\,\,dx_1^{d\delta-2\delta}x_{d-1}^{d\delta-i}x_d^{(d-1)(\delta-1)+i} \sum\limits_{j=0}^{d}(-1)^j \binom{d}{j} x_1^{\delta (d-j)}x_{d-1}^{(\delta-i)j}x_d^{ij}\,.
\end{eqnarray*}
To simplify the notation, set $$\beta_1:=x_1^{(d-1)\delta}x_{d-1}^{(d-1)\delta}x_d^{(d-1)(\delta-1)} \qquad\text{and}\qquad \beta_2:=dx_1^{d\delta-2\delta}x_{d-1}^{d\delta-i}x_d^{(d-1)(\delta-1)+i}\, ,$$ 
and for  $\,0\leq j \leq d$ set 
$$h_j:=(-1)^j \binom{d}{j} x_1^{\delta(d-j)}x_{d-1}^{(\delta-i)j}x_d^{ij}\,.$$ 
It is easy to see that $\beta_1h_0 \in (\underline{f})$ and $\beta_1h_1+\beta_2h_0=0$.  We prove below that  $\beta_1 h_j \in (\underline{f})$ for every $j \geq 2$ and that  $\beta_2 h_j \in (\underline{f})$ for every $j \geq 1$.

Consider the terms $\beta_1 h_j$ with $j \geq 2$. In $\beta_1h_j$  the degree of $x_{d-1}$ is $(d-1)\delta+(\delta-i)j$ and the degree of $x_d$ is $(d-1)(\delta-1)+ij$. Hence to show that $\beta_1h_j \in (\underline{f})$ we need to prove that for every $2\ls j\ls d$ one of the following inequalities holds
\begin{equation}\label{eq_first_j}
(d-1)\delta+(\delta-i)j \geq d\delta 
\qquad\text{ or }\qquad 
(d-1)(\delta-1)+ij \geq d(\delta-1).
\end{equation}
The first inequality is equivalent to $i \leq \frac{\delta(j-1)}{j}$ and the second one is equivalent to $i\geq \frac{\delta-1}{j}$. Since $ \frac{\delta(j-1)}{j} \geq \frac{\delta-1}{j}$ for $j \geq 2$, we have that one of the inequalities in \autoref{eq_first_j} must hold for any such $j$.

Finally, consider the terms $\beta_2h_j$, with $j \geq 1$. In $\beta_2h_j$  the degree of $x_{d-1}$ is $d\delta-i+(\delta-i)j$ and the degree of $x_d$ is $(d-1)(\delta-1)+i+ij$.  
Hence, to show that $\beta_2h_j \in (\underline{f})$ we need to prove that for every $1\ls j\ls d$ one of the following inequalities holds 
\begin{equation}\label{eq_second_j}
	d\delta-i+(\delta-i)j\geq d\delta 
	\qquad\text{ or }\qquad 
(d-1)(\delta-1)+i+ij \geq d(\delta-1).
\end{equation}
The first inequality is equivalent to $i \leq  \frac{\delta j}{j+1}$ and the second one is equivalent to $i\geq \frac{\delta-1}{j+1}$. Since $\frac{\delta j}{j+1} \geq \frac{\delta-1}{j+1}$ for $j \geq 1$, we have that one of the inequalities in \autoref{eq_second_j} must hold for any such $j$. 
\end{proof}

\begin{corollary}\label{d=3}
\autoref{conjectureLex} holds if  $d\ls 3$.
\end{corollary}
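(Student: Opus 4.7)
The plan is to reduce \autoref{conjectureLex} in the case $d\ls 3$ to results already established in the paper, by splitting on the possible values of the height $g$. Recall the hypothesis $2\ls g\ls d$ with $d\ls 3$, so the only possibilities are $(d,g)\in\{(2,2),(3,2),(3,3)\}$.

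First I would handle the ``full height'' cases $g=d$, namely $(d,g)=(2,2)$ and $(d,g)=(3,3)$. For any lex-segment ideal $L$ of height $g=d$ generated in degree $\delta$, one has $L=\fm^\delta$: indeed by \autoref{analyticHeightSS}$(a)$ every minimal monomial generator $\fx^\fv$ of $L$ must satisfy $\min(\fv)\gs d$, which forces the generator to be a pure power, and being lex-segment then gives $L=\fm^\delta$. In this situation the formula $\core(L)=L\,\fm^{d(\delta-2)+g-\delta+1}=\fm^{d(\delta-1)}$ is the content of \cite[Proposition 4.2]{CPU0}, which is precisely one of the cases recorded in \autoref{lex conj summary}. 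So both $(2,2)$ and $(3,3)$ are immediate.

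Next I would dispose of the remaining case $(d,g)=(3,2)$, i.e.\ $g=d-1=2$ with $d=3$. This falls exactly inside the hypotheses of \autoref{d=g+1} (which requires $d\gs 3$ and $g=d-1\gs 2$), and that theorem gives precisely $\core(L)=L\,\fm^{d(\delta-2)+g-\delta+1}$ for every $\delta\gs 2$.

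Combining the three cases exhausts all admissible $(d,g)$ with $d\ls 3$ and $g\gs 2$, proving the corollary. The only ``work'' is the case analysis itself; the substantive inputs (the formulas for powers of $\fm$ and for height $d-1$ lex-segments) are already available, so no further calculation is needed.
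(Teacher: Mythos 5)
Your case split is exactly the paper's proof, which simply cites \cite[Proposition 4.2]{CPU0} for the $g=d$ cases and \autoref{d=g+1} for $(d,g)=(3,2)$; the structure and inputs match.

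One small slip in your $g=d$ step: \autoref{analyticHeightSS}(a) gives $\height(L)=\max\{\min(\fv)\mid\fv\in\log(G(L))\}$, so $g=d$ only says that \emph{some} minimal generator $\fx^\fv$ has $\min(\fv)=d$ (hence is a pure power of $x_d$, i.e. $x_d^\delta\in L$), not that every generator satisfies $\min(\fv)\gs d$ — that stronger claim is false in general (e.g.\ $\fm^2$ in two variables has the generator $x_1x_2$ with $\min=1$). The conclusion $L=\fm^\delta$ is still correct: since $x_d^\delta$ is the lex-smallest monomial of degree $\delta$ and $L$ is a lex-segment ideal generated in degree $\delta$, having $x_d^\delta\in L_\delta$ forces $L_\delta=R_\delta$, i.e.\ $L=\fm^\delta$. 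With that correction the argument is complete.
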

\begin{proof}
The result follows from  \autoref{d=g+1} and   \cite[Proposition 4.2]{CPU0}.
\end{proof}

The next fact follows directly by combining several  results in the literature. We state it here for completeness and to provide a reference. For more information about integral closures and reductions see \cite{HS} .  

\begin{proposition}\label{LisNormal}
Let $R=\kk[x_1,\ldots,x_d]$ be a polynomial ring over a field $\kk$ 
and $L$ a lex-segment ideal generated in degree $\delta \gs 1$.  The ideal $L$ is normal, i.e., $L^n$ is integrally closed for every $n\in  \NN$.
\end{proposition}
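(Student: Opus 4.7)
The plan is to invoke the Newton polyhedron criterion for the integral closure of monomial ideals: for a monomial ideal $I\subset R$, the integral closure $\overline{I}$ is the monomial ideal spanned by $\{\fx^w : w\in \mathrm{NP}(I)\cap \ZZ^d\}$, where $\mathrm{NP}(I)=\conv(\log G(I))+\RR_{\geq 0}^d$ is the Newton polyhedron (see \cite{HS}). Since $L$ is generated in the single degree $\delta$, this yields $\mathrm{NP}(L^n)=n\,P+\RR_{\geq 0}^d$ with $P:=\conv(\log G(L))$, and in degree $n\delta$ the monomials of $\overline{L^n}$ correspond exactly to the lattice points of $nP$. Hence $L^n=\overline{L^n}$ for every $n$ is equivalent to the \emph{integer decomposition property} (IDP) for $P$: every lattice point in $nP$ is a sum of $n$ lattice points of $P$.

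First I would settle the case $n=1$, i.e., $P\cap \ZZ^d=\log G(L)$. If $w$ is a lattice point expressed as a convex combination $w=\sum_i \lambda_i v_i$ of exponents $v_i\in\log G(L)$, then since all $(v_i)_1\geq m_1$ (where $m$ is the lex-smallest generator of $L$) one has $w_1\geq m_1$; if $w_1>m_1$ then $w>_{\mathrm{lex}} m$, otherwise $(v_i)_1=m_1$ for every $i$ with $\lambda_i>0$ and one iterates on the next coordinate, concluding $w\geq_{\mathrm{lex}}m$. For general $n$, I would induct: given $w\in nP\cap \ZZ^d$ of degree $n\delta$, extract a single $v\in\log G(L)$ with $v\leq w$ coordinatewise and $w-v\in(n-1)P$, and apply the inductive hypothesis to $w-v$. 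The natural candidate for $v$ is the lex-largest generator of $L$ whose exponent vector is componentwise bounded by $w$.

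The main obstacle is the extraction step. Lex-segment ideals in one degree are in general neither polymatroidal nor sortable---for example, the lex-segment in $\kk[x_1,\ldots,x_4]$ with threshold $x_2x_3^2x_4$ already fails polymatroidality---so the standard exchange-type IDP arguments available for those classes do not apply directly. The verification that $w-v\in (n-1)P$ must instead be made from the initial-segment description $\log G(L)=\{v\in \NN^d : |v|=\delta,\ v\geq_{\mathrm{lex}} m\}$, using both the lex-maximality of the chosen piece and the fact that $L$ is equigenerated. Alternatively, as the authors' phrasing ``follows directly by combining several results in the literature'' suggests, the conclusion can be assembled from known normality theorems for strongly stable or Borel-fixed monomial ideals with linear resolution, bypassing a first-principles IDP argument.
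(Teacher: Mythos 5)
Your framework (Newton polyhedron, equivalence with IDP for the polytope $P=\conv(\log G(L))$, and the $n=1$ verification that $P\cap\ZZ^d=\log G(L)$ via coordinatewise lex comparison against the lex-smallest generator) is sound. The gap is exactly where you locate it: the extraction step of the proposed induction does not work as stated, and the particular rule you suggest (peel off the lex-largest generator componentwise $\le w$) can be refuted already in three variables at $n=2$. Take $L=(x_1^2,x_1x_2,x_1x_3,x_2^2)\subset\kk[x_1,x_2,x_3]$, a lex-segment ideal generated in degree $2$, so $P=\conv\{(2,0,0),(1,1,0),(1,0,1),(0,2,0)\}$. The lattice point $w=(1,2,1)=(0,2,0)+(1,0,1)$ lies in $2P$. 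The generators componentwise $\le w$ are $(1,1,0)$, $(1,0,1)$, $(0,2,0)$, and the lex-largest is $(1,1,0)$; but $w-(1,1,0)=(0,1,1)$ is not in $P$ (writing it as a convex combination of the four exponent vectors forces the coefficient of $(1,0,1)$ to be $1$, giving $(1,0,1)\neq(0,1,1)$). So the recursion stalls, whereas the valid decomposition uses the lex-smaller piece $(0,2,0)$. Incidentally, your skepticism about sortability is also borne out by this same $L$: the sorting of $x_1x_3\cdot x_2^2=x_1x_2^2x_3$ gives $(x_1x_2,\,x_2x_3)$ and $x_2x_3\notin L$, so sortability in the sense of Sturmfels is not available here either.

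Your fallback is, in fact, exactly what the paper does: the published proof is a one-line citation chain, invoking Herzog--Hibi--Vladoiu \cite[Theorem 5.1]{HHVl}, De Negri \cite[Proposition 2.14 and its proof]{DeNegri}, and Sturmfels \cite[Proposition 13.15]{Sturm}. Roughly, De Negri's result (for stable sets of monomials in one degree) produces a squarefree initial ideal for the toric ideal of the special fiber ring, Sturmfels converts the existence of such an initial ideal into normality (IDP) of the lattice polytope, and the Herzog--Hibi--Vladoiu result transfers normality of the fiber data to normality of the ideal $L$ itself. So the honest completion of your proposal is not to repair the extraction heuristic but to pin down and quote those three statements; a first-principles IDP induction, if one wants it, would need a more careful selection rule than ``lex-largest below $w$.''
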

\begin{proof}
The proof follows from \cite[Theorem 5.1]{HHVl}, \cite[Proposition 2.14 and its proof]{DeNegri}, and \cite[Proposition 13.15]{Sturm}.
\end{proof}

The following two results provide upper bounds for the core of lex-segment ideals generated in a single degree.

\begin{theorem}\label{core_in_adj}
Let $R=\kk[x_1,\ldots,x_d]$ be a polynomial ring over a field $\kk$ of characteristic zero.  If $L$ is a lex-segment ideal of height $g\gs 2$ generated in degree $\delta\gs 2$, then $$\core(L) \subseteq {\rm adj}(L^{g}),$$ where ${\rm adj} (L^g)$ denotes the adjoint of $L^g$ as in \cite[Definition 1.1]{L}.
\end{theorem}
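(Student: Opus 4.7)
The strategy is to reduce the statement to a core-adjoint inclusion for normal ideals in a regular local ring, using the structural properties of lex-segment ideals already established in the paper. By \autoref{LisNormal}, the ideal $L$ is normal; by \autoref{ArtinN}, $L$ satisfies $G_d$ and $AN_{d-1}^-$; and by \autoref{usefulEx}, $\ell(L)=d$. Moreover, by \autoref{corelocalizes} the core commutes with localization under these residual conditions, and adjoints of monomial ideals in a polynomial ring over a field of characteristic zero commute with localization through Howald's Newton-polyhedron formula. Hence it suffices to establish $\core(L_{\fm})\subseteq \adj(L_{\fm}^{g})$ in the regular local ring $R_{\fm}$.

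In this local setting, I would invoke the general inclusion $\core(I)\subseteq \adj(I^{\ell(I)})$, valid for a normal ideal $I$ of analytic spread $\ell(I)$ in a regular local ring satisfying the above residual conditions (cf.\ Hyry--Smith in their work on Kawamata's conjecture, and related formulations by Hübl--Swanson and Polini--Ulrich--Vasconcelos). Applied to $L_{\fm}$ with $\ell(L_{\fm})=d$, this yields $\core(L_{\fm})\subseteq \adj(L_{\fm}^{d})$. Since $g\leq d$, the inclusion $L^{d}\subseteq L^{g}$ together with monotonicity of adjoints under ideal inclusions gives $\adj(L^{d})\subseteq \adj(L^{g})$, and therefore $\core(L)\subseteq \adj(L^{g})$, as claimed.

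The principal obstacle is verifying that $L$ satisfies the precise hypotheses of the core-adjoint inclusion theorem (normality, $G_{\ell}$, appropriate residual $S_2$ or sliding-depth property, characteristic zero) -- all of which are supplied, respectively, by \autoref{LisNormal}, \autoref{ArtinN}, \autoref{usefulEx}, and the ambient assumption on $\kk$. An alternative and more hands-on route would bypass the general inclusion by combining the colon formula $\core(L)\subseteq J^{n+1}:L^{n}$ from \autoref{core in formula} with Lipman's Skoda theorem for adjoints, namely $\adj(L^{n})=J\cdot\adj(L^{n-1})$ for $n\geq d$ and any minimal reduction $J$ (which has $d$ generators by \autoref{usefulEx}); one can then translate the resulting containment directly through Howald's combinatorial formula $\adj(L^{g})=\{\fx^{\fv}:\fv+\mathbf{1}\in\interior(g\cdot\Gamma(L))\}$, with the normality of $L$ ensuring $\Gamma(L^{n})=n\cdot\Gamma(L)$ and the lex-segment structure keeping the facet analysis tractable. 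The delicate point in this second route is recovering the correct $+\mathbf{1}$ shift of Howald's interior condition from the strict inequalities forced by the colon and by normality, carried out facet by facet on $\Gamma(L)$.
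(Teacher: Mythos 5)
Your first route rests on a theorem that, as far as I can tell, does not exist in the literature in citable form. The inclusion $\core(I)\subseteq\adj(I^{\ell(I)})$ for a normal ideal $I$ in a regular local ring with residual conditions is not a theorem of Hyry--Smith; their work on Kawamata's conjecture proves statements of this flavour only \emph{conditionally}, and the unconditional equality $\core(I)=\adj(I^{\ell(I)})$ in this setting is precisely the kind of open question their paper is organized around. (The containment that \emph{is} a theorem, via Lipman's Skoda-type result, goes the opposite way: $\adj(I^{\ell})\subseteq\core(I)$, since $\adj(I^{\ell})\subseteq J$ for every minimal reduction $J$.) So the first route is circular at the crucial step. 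Your second route --- colon formula plus Skoda plus Howald's interior criterion --- is a plausible starting point, but you yourself flag the unresolved delicate point (the $+\mathbf{1}$ shift, facet by facet), and Lipman's Skoda recursion $\adj(L^{n})=J\,\adj(L^{n-1})$ only kicks in at $n\gs d$, whereas the target is $\adj(L^{g})$ with $g$ possibly strictly less than $d$; how one descends from $n\gs d$ to $g$ is exactly where the content lies and is not addressed.

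The paper's actual argument is of a different kind and supplies exactly what is missing. It works with the extended Rees algebras $A=R[Jt,t^{-1}]$ and $B=R[Lt,t^{-1}]$, uses normality of $L$ to show $B$ is a normal Cohen--Macaulay direct summand of a polynomial ring, hence has rational singularities by Boutot, and therefore $[\omega_B]_i t^{-i}=\adj(L^i)$ (Hyry). It then computes $\omega_B\cong\Hom_A(B,\omega_A)$ explicitly as an intersection of colon ideals, and the sharpening to the exponent $g$ (rather than $d=\ell(L)$) is extracted from the $a$-invariant computation $a(\gr_J(R))=-g$ (Simis--Ulrich--Vasconcelos), which yields $At^{g-1}\subseteq\omega_A$ and hence $J^{i+1}:L^{i}\subseteq[\omega_A]_{g+i}t^{-g-i}:L^i$. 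Combined with \autoref{core in formula}, this forces $\core(L)\subseteq[\omega_B]_g t^{-g}=\adj(L^g)$. This $a$-invariant step is what makes the height $g$ (as opposed to $d$) appear, and it has no counterpart in your sketch. In short: the ingredients you cite (\autoref{LisNormal}, \autoref{ArtinN}, \autoref{usefulEx}, \autoref{core in formula}) are indeed the right ones, but the bridge between them --- passing through canonical modules of the Rees algebras and their $a$-invariants --- is missing, and the theorem you substitute for that bridge is not available.
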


\begin{proof}
 Let $J$ be any minimal reduction of $L$. Let  $A=R[Jt,t^{-1}]$ and  $B=R[Lt,t^{-1}]$ be the extended Rees algebras  of $J$ and $L$, and 
  $\omega_A$ and $\omega_B$ be  their graded canonical modules. 
   Notice that
   $$\omega_{A}\subset 
   \left(\omega_{A}\right)_{t^{-1}}
   =   
   \omega_{A_{t^{-1}}}
   =
   \omega_{R[t,t^{-1}]}\,.$$
Thus making the identification  $\omega_{R[t,t^{-1}]}=R[t,t^{-1}]$ we obtain an embedding 
$\omega_A\subset R[t,t^{-1}]$ so that 
$\left(\omega_{A}\right)_{t^{-1}}=R[t,t^{-1}]$.  
  Therefore  $[\omega_{A}]_nt^{-n}=R$ for $n$ sufficiently small.     
  
 By \autoref{LisNormal}  $L$ is a normal monomial ideal, thus   $B$ is a normal Cohen-Macaulay algebra and a direct summand of a polynomial ring according to \cite[Theorem 6.10 and Theorem 4.43]{BG}. Therefore, $B$ has rational singularities \cite[Th\'eor\`eme]{Boutot}. 
 We conclude that 
 \begin{equation}\label{eq_adj}
[\omega_B]_{i}t^{-i}={\rm adj}(L^i)\,.
 \end{equation}
 for every $i\gs 0$ by \cite[proof of Corollary 3.5]{Hury}. Let $K:=\Quot(R)$ be the field of fractions of $R$ and let $r:=r_J(L) $ be the reduction number of $L$ with respect to $J$. 
 We have the following  isomorphisms of graded $A$-modules
 \begin{align*}
 	\omega_{B} \cong \Hom_A(B,\, \omega_{A})&\cong \omega_A:_{K(t)} B\\
 	&=\omega_A:_{R[t,t^{-1}]}B\\
 	&= \omega_A:_{R[t,t^{-1}]}(R \oplus Lt \oplus \ldots \oplus L^{r}t^{r})\\
 	&= \bigcap \limits_{i=0}^{r}\left( \omega_A:_{R[t,t^{-1}]}L^{i}\right)t^{-i}\\
 	&=  \bigcap \limits_{i=0}^{r}\left( \pars{\bigoplus \limits_{j\in \ZZ}[\omega_A]_j}:_{R[t,t^{-1}]}L^{i}\right)t^{-i}\\
 	&= \bigcap \limits_{i=0}^{r} \bigoplus \limits_{j\in \ZZ}\left([\omega_A]_jt^{-j}:_{R}L^{i}\right)t^{j-i}\\
 	&= \bigoplus \limits_{s \in \mathbb{Z}}\left( \bigcap \limits_{i=0}^{r}\left( [\omega_A]_{s+i}t^{-s-i}:_{R}L^{i}\right)\right)t^{s}.
 \end{align*}
 In particular, 
 \begin{equation}\label{wB}
  [\omega_B]_gt^{-g}=\bigcap \limits_{i=0}^{r}\left( [\omega_A]_{g+i}t^{-g-i}:_{R}L^{i}\right)\,.  
 \end{equation}

 By \autoref{ArtinN} and \cite[Proposition 1.11, Remark 1.12, and Corollary 1.8 (c)]{U1}, $J$ satisfies $G_\infty$ and sliding depth.  
 Thus by \cite[Theorem 6.1]{HSV_II} we have that ${\rm gr}_{J}(R)=\oplus_{n\gs 0}\,J^n/J^{n+1}$  is Cohen-Macaulay. 
Moreover $\{J^{n+1}:L^{n}\}_{n \in \mathbb{N}}$ forms a decreasing sequence of ideals.  Indeed, since ${\rm gr}_{J}(R)$ is Cohen-Macaulay, we have $J^{n+i}:J^n=J^i$ for every non-negative integers $i$ and $n$. Therefore   
$$ J^{n+1}:L^n=(J^{n+2}:J):L^n=J^{n+2}:JL^n\supseteq J^{n+2}:L^{n+1}.$$ 

Computing $a$-invariants we have $a(A)=a({\rm gr}_J(R))+1$, as ${\rm gr}_{J}(R)\cong A/(t^{-1})$. Furthermore $a({\rm gr}_{J}(R))=-g$ by \cite[Theorem 3.5]{SUV1}. Therefore $[\omega_A]_{g-1}t^{1-g}=R$, which implies $At^{g-1} \subseteq \omega_A$.   Hence by \autoref{core in formula}   for all $0\ls i\ls r$ we have
$$
\core(L) \subseteq J^{r+1}:_RL^{r} \subseteq J^{i+1}:_RL^{i}
=[At^{g-1}]_{g+i}t^{-g-i}:_RL^i\\
\subseteq  [\omega_A]_{g+i}t^{-g-i}:_RL^i.
$$
Thus $\core(L)  \subseteq \bigcap \limits_{i=0}^{r}( [\omega_A]_{g+i}t^{-g-i}:_{R}L^{i})=[\omega_B]_gt^{-g}={\rm adj}(L^g)$ by \autoref{wB} and \autoref{eq_adj}, as desired.
\end{proof}

\begin{corollary}\label{upper}
Let $R=\kk[x_1,\ldots,x_d]$ be a polynomial ring over a field $\kk$ of characteristic zero and $\fm=(x_1,\ldots, x_d)$ the maximal  homogeneous 
 ideal of $R$.  If $L$ is a lex-segment ideal of   height $g\gs 2$ generated in degree $\delta\gs 2$, then $$\core(L)\subseteq L\fm^{(g-1)\delta-d+1}.$$  
\end{corollary}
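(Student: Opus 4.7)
The plan is to deduce the statement from \autoref{core_in_adj} together with the combinatorial description of adjoints of monomial ideals in terms of Newton polyhedra due to Howald \cite{Howald}. Throughout, write $|\fw|=w_1+\cdots+w_d$ for $\fw \in \mathbb{R}^d$, $\Gamma(I)$ for the exponent set, and $\mathrm{Newton}(I)=\conv(\log \Gamma(I))+\RR_{\gs 0}^d$ for the Newton polyhedron of a monomial ideal $I$.

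First, I would reduce the claim to bounding the degrees of the monomial generators of $\adj(L^g)$. Since $\core(L)\subseteq L$, it suffices to show $\core(L) \subseteq L \cap \fm^{g\delta-d+1}$, and because $L$ is generated in the single degree $\delta$, any monomial in $L$ of total degree at least $g\delta-d+1$ factors as a degree-$\delta$ minimal generator of $L$ times a monomial of degree at least $(g-1)\delta-d+1$. Thus
\[
L\cap \fm^{g\delta-d+1} = L\,\fm^{(g-1)\delta-d+1}\,,
\]
which handles the case when $(g-1)\delta-d+1\gs 0$; the other case is trivial since then $\fm^{(g-1)\delta-d+1}=R$ and $\core(L)\subseteq L$.

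Next, I would invoke \autoref{core_in_adj} to obtain $\core(L)\subseteq \adj(L^g)$, and then apply Howald's theorem, which for a monomial ideal $I$ identifies $\adj(I)$ as the monomial ideal generated by those $\fx^{\fv}$ with $\fv+\f1 \in \interior(\mathrm{Newton}(I))$. Since $L^g$ is generated in degree $g\delta$, the polytope $\conv(\log \Gamma(L^g))$ lies on the hyperplane $\{\fw : |\fw|=g\delta\}$ and is a face of $\mathrm{Newton}(L^g)$; in particular, $\mathrm{Newton}(L^g)$ lies in the closed half-space $\{\fw : |\fw|\gs g\delta\}$, and its interior lies strictly above, i.e.\ inside $\{\fw : |\fw| > g\delta\}$. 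Applying this to $\fw=\fv+\f1$, any monomial generator $\fx^\fv$ of $\adj(L^g)$ satisfies $|\fv|+d>g\delta$, hence $|\fv|\gs g\delta-d+1$. Therefore $\adj(L^g)\subseteq \fm^{g\delta-d+1}$, and combining with the first paragraph yields
\[
\core(L)\subseteq L\cap \adj(L^g)\subseteq L\cap \fm^{g\delta-d+1}=L\,\fm^{(g-1)\delta-d+1}\,.
\]

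The main technical point is the appeal to Howald's description of $\adj(L^g)$ and the observation that the degree-minimal face of $\mathrm{Newton}(L^g)$ sits on the supporting hyperplane $|\fw|=g\delta$; both are standard, but care should be taken to confirm that Lipman's adjoint (used in \autoref{core_in_adj} via \cite{L}) coincides in this monomial setting with the multiplier-ideal expression that Howald computes, so that his formula is applicable.
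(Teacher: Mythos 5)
Your proof is correct and follows essentially the same route as the paper: combine \autoref{core_in_adj} with Howald's combinatorial description of $\adj(L^g)$ to deduce $\adj(L^g)\subseteq \fm^{g\delta-d+1}$, then intersect with $L$ and use that $L$ is generated in degree $\delta$ to rewrite $L\cap\fm^{g\delta-d+1}=L\,\fm^{(g-1)\delta-d+1}$. The only cosmetic difference is in how Howald's theorem is packaged: the paper first invokes \autoref{LisNormal} to get that $L^g$ is integrally closed, and phrases the consequence as ``$\fx^\fv\in\adj(L^g)$ implies $\fx^\fv x_1\cdots x_d\in L^g\fm$,'' whereas you read off the degree bound directly from the fact that $\mathrm{Newton}(L^g)$ lies in the half-space $\{|\fw|\gs g\delta\}$ and $\fv+\f1$ must be in its interior; your version does not explicitly need the normality of $L^g$, which is a slight simplification. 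Your closing caveat about Lipman's adjoint agreeing with the multiplier ideal in Howald's theorem is a fair point (it holds in this monomial, characteristic-zero setting), and is also implicitly assumed by the paper.
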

\begin{proof}
The ideal $L^g$ is integrally closed  by \autoref{LisNormal} and it is generated in a single degree. Hence \cite[Main Theorem]{Howald}  implies that if $\fx^\fv\in \adj(L^g)$  then $\fx^\fv x_1x_2\cdots x_d\in L^g\fm$. Therefore 
$\adj(L^g)\subseteq \fm^{g\delta-d+1}$. 
Finally, by \autoref{core_in_adj}  we conclude that $\core(L)\subseteq L\cap \adj(L^g)\subseteq L\cap \fm^{g\delta-d+1}=L\fm^{(g-1)\delta-d+1}$.
\end{proof}

\section*{Acknowledgments}
The second author was partially funded by NSF Grant DMS \#2001645/2303605. 
  The third author was partially funded by NSF Grant DMS \#2201110. 
  The fourth author was partially funded by NSF Grant DMS \#2201149. 
Part of this research  was done at the Mathematisches Forschungsinstitut Oberwolfach (MFO) while some of the authors were in residence 
during several research visits. 
The authors thank MFO for its hospitality and excellent working conditions. 

 \medskip


\begin{thebibliography}{99}
\addcontentsline{toc}{section}{Bibliography}


\bibitem{Aper} R. Ap\'ery,  \emph{Sur les courbes de premi\`ere esp\`ece de l'espace \`a trois dimensions}, C.R. Acad. Sci. Paris S\'er. I {\bf 220} (1945), 271--272.

\bibitem{ALS} K. Ansaldi, K. N. Lin, and Y. H. Shen,  \emph{Generalized Newton complementary duals of monomial ideals},  J. Algebra Appl. {\bf 20} (2021), 26 pp.

\bibitem{AH} L.~Avramov, and J.~Herzog, \emph{ The Koszul algebra of a codimension 2 embedding}, Math. Z.
{\bf 175} (1980), 249--260.


\bibitem{AN} M.~Artin and M.~Nagata, \emph{Residual intersections in Cohen-Macaulay rings}, J. Math. Kyoto Univ. {\bf 12} (1972), 307--323.



\bibitem{Boutot} J.-F. Boutot, \emph{Singularit\'es rationnelles et quotients par les groupes r\'eductifs}, Invent. Math. {\bf 88} (1987), 65--68.

\bibitem{BG} W.~ Bruns and J. Gubeladze, \emph{Polytopes, Rings, and K-theory}, Springer,
Dordrecht, 2009.

\bibitem{BH} W.~ Bruns and J.~Herzog, \emph{Cohen-Macaulay Rings}, Cambridge Studies in Advanced Mathematics, {\bf 39}, Cambridge, Cambridge University Press, 1993.

\bibitem{CEU} M.~Chardin, D.~Eisenbud, and B.~Ulrich, \emph{Hilbert functions, residual intersections, and residually $S_2$ ideals}, Compositio Math. {\bf 125} (2001), 193--219.



\bibitem{CPU} A.~Corso, C. Polini, and B. Ulrich, \emph{The structure of the core of ideals}, Math. Ann. {\bf 321} (2001), 89--105.



\bibitem{CPU0} A.~Corso, C. Polini, and B. Ulrich, \emph{Core and residual intersections of ideals}, Trans. Amer. Math. Soc. {\bf 354} (2002), 2579--2594.

\bibitem{CS} B. Costa and A. Simis, \emph{New constructions of Cremona maps}, Math. Res. Lett. {\bf 20} (2013), 629--645.


\bibitem{Cumming} C. Cumming,
\emph{The core of an ideal in Cohen-Macaulay rings}, J. Commut. Algebra {\bf 10} (2018), 163--170. 

\bibitem{DeNegri} E. De Negri, \emph{Toric rings generated by special stable sets of monomials}, Math. Nachr. {\bf 203} (1999), 31--45.

\bibitem{E} D.~Eisenbud, \emph{Commutative Algebra: with a View Toward Algebraic Geometry}, Graduate Texts in Mathematics {\bf 150}, Springer-Verlag, New York, 1995.



\bibitem{Engheta} B. Engheta,  \emph{Bound on the multiplicity of almost complete intersections} Comm. Algebra {\bf 37},  (2009), 948--953.

\bibitem{FM} L. Fouli and S.  Morey, \emph{Minimal reductions and cores of edge ideals}, J. Algebra {\bf 364} (2012), 52--66.



\bibitem{FPU2} L.~Fouli, C.~Polini, and B.~Ulrich, \emph{The core of ideals in arbitrary characteristic},  Michigan Math. J. {\bf 57} (2008), 305--319.

\bibitem{FPU} L.~Fouli, C.~Polini, and B.~Ulrich, \emph{Annihilators of graded components of the canonical module, and the core of standard graded algebras}, Trans. Amer. Math. Soc. {\bf 362} (2010), 6183--6203.

\bibitem{Gaeta} F. Gaeta,  \emph{Quelques progr\`es r\'ecents dans la classification des vari\'et\'es alg\'ebriques d'un
espace projectif}, Deuxi\`eme Colloque de G\'eometrie Alg\'ebrique, Li\`ege, 1952.


\bibitem{GS} D.~Grayson and M.~Stillman, \emph{Macaulay2, a software system for research in algebraic geometry}, Available at http://www.math.uiuc.edu/Macaulay2.





\bibitem{HH} J. Herzog and T. Hibi, \emph{Monomial Ideals}, Springer London, 2011.

\bibitem{HHVl}{ J. Herzog, T. Hibi, and M. Vladoiu, \emph{Ideals of fiber type and polymatroids}, Osaka J. Math. {\bf 42} (2005), no. 4, 807--829.}


\bibitem{HSV_II} J. Herzog, A. Simis, and W.V. Vasconcelos, \emph{Approximation complexes of blowing-up rings, II}, 
 J.  Algebra {\bf 82} (1983), 53--83.

 
\bibitem{HVV} J.~Herzog, W.V. Vasconcelos, and R. Villarreal, \emph{Ideals with sliding depth}, Nagoya Math J. \textbf{99} (1985), 159--172. 

\bibitem{HoHu} M. Hochster and C. Huneke, \emph{Tight closure, invariant theory, and the {B}rian\c{c}on-{S}koda
              theorem}, J. Amer. Math. Soc. \textbf{3} (1990), 31--116.


\bibitem{Howald} J. Howald, \emph{Multiplier ideals of monomial ideals}, Trans. Amer. Math. Soc. {\bf 353} (2001), 2665--2671.

\bibitem{H1} C. Huneke, \emph {Linkage and Koszul homology of ideals}, Amer. J. Math. {\bf 104} (1982), 1043--1062.


\bibitem{H2} C. Huneke, {\em Strongly Cohen-Macaulay schemes and residual intersections}, Trans. Amer.
Math. Soc. {\bf 277} (1983), 739--763.

\bibitem{HMMS} C. Huneke, P. Mantero, J. McCullough, and A. Seceleanu, \emph{A multiplicity bound for graded rings and a criterion for the
              {C}ohen-{M}acaulay property}, Proc. Amer. Math. Soc. {\bf 143} (2015), 2365--2377.

\bibitem{HSw} C.~Huneke and I.~Swanson, \emph{Cores of ideals in 2-dimensional regular local rings}, Michigan Math. J. {\bf 42} (1995), 193--208.

\bibitem{HS} C.~Huneke and I.~Swanson, \emph{Integral Closures of Ideals, Rings and Modules}, London Math. Society Lecture Note Series {\bf 336}, Cambridge University Press, 2006.

\bibitem{HT}C. Huneke and N. V. Trung, {\it On the core of ideals}, Compositio Math. {\bf 141} (2005), 1--18.

\bibitem{Hury}{E. Hyry, \emph{Coefficient ideals and the Cohen-Macaulay property of Rees algebras}, Proc. Amer. Math. Soc. {\bf 129} (2001), 1299--1308.}

\bibitem{HyS1} E. Hyry and K. Smith, \emph{On a non-vanishing conjecture of Kawamata and the core of an ideal}, Amer. J. Math. {\bf 125} (2003), 1349--1410.

\bibitem{HyS2} E. Hyry and K. Smith, \emph{ Core versus graded core, and global sections of line bundles}, Trans. Amer. Math. Soc. {\bf 356} (2003), 3143--3166.





\bibitem{JU} M.~Johnson and B.~Ulrich, \emph{Artin-Nagata properties and Cohen-Macaulay associated graded rings}, Compos. Math. \textbf{103} (1996), 7--29.


\bibitem{KohlAngela1} A. Kohlhaas,  {\it Coefficient ideals in dimension two}, Illinois J. Math. {\bf 58} (2014), 1041–-1053. 

\bibitem{KohlAngela2} A. Kohlhaas,  \emph{Symmetry in the core of a zero-dimensional monomial ideal} J. Algebra {\bf 453} (2016), 364–-376. 



\bibitem{Lazarsfeld} R. Lazarsfeld, \emph{Positivity in algebraic geometry, I. \& II}. Ergeb. Math. Grenzgeb. (3), {\bf 48} \& {\bf 49}, Springer-Verlag, Berlin, 2004.

\bibitem{L}{J. Lipman, \emph{Adjoints of ideals in regular local rings}, Math. Res. Lett. {\bf 1}, (1994), 739--755.}

\bibitem{LS} J. Lipman and A. Sathaye, Jacobian ideals
and a theorem of {B}rian\c con-{S}koda, Michigan Math. J. {\bf 28} 
(1981), 199--222. 




\bibitem{Mat} H.~Matsumura, \emph{Commutative Ring Theory}, Cambridge Studies in Advance Mathematics {\bf 8}, Cambridge University Press, 1986.



\bibitem{MS} E.~Miller and B.~Sturmfels, \emph{Combinatorial Commutative Algebra}, Grad. Texts in Math., {\bf 227}, Springer, New York, 2004.





\bibitem{PU1} C. Polini and B. Ulrich, \emph{A formula for the core of an ideal}, Math. Ann. {\bf 331} (2005), 487--503.


\bibitem{PUV} C.~Polini, B.~Ulrich, and M. A.~Vitulli, \emph{The core of zero-dimensional monomial ideals}, Adv. Math. {\bf 211} (2007), 72--93.

\bibitem{PX} C.~Polini and Y.~Xie, \emph{$j$-Multiplicity and depth of associated graded modules}, J. Algebra {\bf 379} (2013), 31--49.


\bibitem{Sin} P.~Singla, \emph{Minimal monomial reductions and the reduced fiber ring of an extremal ideal}, Illinois J. Math. {\bf 51} (2007), 1085--1102.


\bibitem{SST} M. Saito, B. Sturmfels, N. Takayama, \emph{Gr\"obner Deformations of Hypergeometric Differential Equations}, Algorithms
Comput. Math., Springer, Berlin, 2000.

\bibitem{SUV1} A.~Simis, B.~Ulrich, and W.~Vasconcelos, \emph{Cohen-Macaulay Rees algebras and degrees of polynomial relations}, Math. Ann. \textbf{301} (1995), 421--444.

\bibitem{SmCore} B.~Smith, \emph{A formula for the core of certain strongly stable ideals}, J. Algebra {\bf 347} (2011), 40--52.

\bibitem{Sturm} B. Sturmfels, \emph{Gr\"obner Bases and Convex Polytopes}, AMS University Lecture Series {\bf 8},
(1995).

\bibitem{OWY} 
T. Okuma, K.-I.  Watanabe, and K.-I. Yoshida, \emph{A characterization of two-dimensional rational singularities via core of ideals},  J. Algebra {\bf 499} (2018), 450--468.



\bibitem{U1} B.~Ulrich, \emph{Artin-Nagata properties and reduction of ideals},  Contemp. Math. \textbf{159} (1994), 99--113.



 
\bibitem{VasBlowup} W.~Vasconcelos, \emph{Arithmetic of Blowup Algebras}, {\bf 195}, Cambridge University Press, 1994.



\bibitem{W} H.-J. Wang,  \emph{Core of ideals of Noetherian local rings}, Proc. Amer. Math. Soc. {\bf 136} (2008), 801--807.

\bibitem{Watanabe} J. Watanabe, \emph{A note on Gorenstein rings of embedding codimension three}, Nagoya
Math. J. {\bf 50} (1973), 227--232.

\bibitem{Xie} Y.~Xie, \emph{Formulas for the multiplicities of graded algebras}, Trans. Amer. Math. Soc. {\bf 364} (2012), 4085--4106.

\end{thebibliography}
\end{document}